\newtheorem{theorem}{Theorem}[section]
\newtheorem{lemma}[theorem]{Lemma}
\newtheorem{proposition}[theorem]{Proposition}
\newtheorem{corollary}[theorem]{Corollary}
\newtheorem{definition}[theorem]{Definition}
\theoremstyle{remark}
\newtheorem{remark}{Remark}[section]
\newtheorem{rmks}{Remarks}[section]
\newtheorem*{Examples}{Examples}
\newcommand{\MLR}{\mathsf{MLR}}
\newcommand{\SR}{\mathsf{SR}}
\newcommand{\CR}{\mathsf{CR}}
\newcommand{\dom}{\text{dom}}
\definecolor{purple}{rgb}{.9,0.2,.9}
\newcommand{\cs}{2^\omega}
\newcommand{\uh}{{\upharpoonright}}
\renewcommand{\phi}{\varphi}
\newcommand{\str}{2^{<\omega}}
\newcommand{\binrat}{\mathbb{Q}_2}
\newcommand{\appmu}{\widehat{\mu}}
\newcommand{\leqx}{\preceq}
\newcommand{\halts}{{\downarrow}}
\newcommand{\pre}{\mathsf{Pre}}
\newcommand{\cMLR}{\mathsf{MLR}_{\mathrm{comp}}}
\newcommand{\cNCR}{\mathsf{NCR}_{\mathrm{comp}}}
\newcommand{\Atom}{\mathsf{Atom}}
\title{Effective randomness, strong reductions and Demuth's theorem}
\author{Laurent Bienvenu and Christopher Porter}
\date{} 
\begin{document}

\maketitle

\begin{abstract}
We study generalizations of Demuth's Theorem, which states that the image of a Martin-L\"of random real under a $tt$-reduction
is either computable or Turing equivalent to a Martin-L\"of random real.  We show that Demuth's Theorem holds for Schnorr randomness and computable randomness (answering a question of Franklin), but that it cannot be strengthened by replacing the Turing equivalence in the statement of the theorem with $wtt$-equivalence. We also provide some additional results about the Turing and $tt$-degrees of reals that are random with respect to some computable measure.  
\end{abstract}

\tableofcontents

\newpage

\section{Introduction}

The main topic of this paper is a theorem of Oswald Demuth's concerning effective randomness.  Demuth's theorem is often stated as follows. Given a Martin-L\"of random real~$x$, every non-computable real~$y$ that $x$ $tt$-computes is in turn \emph{Turing} equivalent to a Martin-L\"of random real~$z$. Demuth's original result (in the paper~\cite{Demuth1988}) is written using a slightly outdated terminology (Martin-L\"of random reals are for example called ``non-approximable"), and has sometimes been mistranslated in modern parlance. For example, a stronger version has repeatedly appeared in circulated drafts, talks, and even in some published papers, which asserts that one can even require that $y$ is \emph{wtt}-equivalent to~$z$. This is for example the version given in~\cite{Franklin2008}, where the author further asks:
\begin{itemize}
\item[(a)] whether $z$ can further be required to be $tt$-equivalent to~$y$; and 
\item[(b)] whether Demuth's theorem also holds for Schnorr randomness.
\end{itemize}

In attempting to answer question (a), we realized that even the ``$wtt$-version" of Demuth's theorem was, in fact, false, as we will show below. We will also answer question~(b) positively. 
 
 The rest of the paper is organized as follows. In the first half of the paper, we study the generalization of Demuth's Theorem to different notions of effective randomness. We first (in Section~\ref{sec:background}) provide the necessary background on computable probability measures and their connections to strong reductions, leading to a proof of Demuth's theorem (Section~\ref{sec:demuth}).  Then, we show (Section~\ref{sec:Demuth-for-others}) that the analogue of Demuth's theorem holds for other notions of randomness, namely computable randomness and Schnorr randomness, even though the proof requires some additional effort. In Section~\ref{sec:demuth-wtt}, we study the ``$wtt$-analogue" of Demuth's theorem and show that it fails for all notions of randomness considered in the paper. The last section is a general study of Turing degrees of reals that are random with respect to some computable probability measure. For completeness, the proofs of previously known results as well as technical lemmas needed in our discussion are given in a separate appendix. 

 We assume that the reader is familiar with the basics of computability theory:  computable functions, partial computable functions, computably enumerable sets, Turing functionals, Turing degrees, the Turing jump, and so on (see, for instance, \cite{Soare1987}), as well as the basics of effective randomness (otherwise, we refer the reader to~\cite{DowneyH2010} or ~\cite{Nies2009}). Before moving on to the next section, let us fix some notation and terminology. We denote by $\cs$ the set of infinite binary sequences, also known as Cantor space. We denote the set of finite strings by~$\str$ and the empty string by~$\varnothing$. $\binrat$ is the set of dyadic rationals, i.e., multiples of a negative power of~$2$.  Given $x\in \cs$ and an integer~$n$, $x \uh n$ is the string that consists of the first~$n$ bits of~$x$, and $x(n)$ is the $(n+1)$st of $x$ (so that $x(0)$ is the first bit of $x$).  If $\sigma$ is a string, and $x$ is either a string or an infinite sequence, then $\sigma \leqx x$ means that $\sigma$ is a prefix of~$x$. Given a string~$\sigma$, the \emph{cylinder} $[\sigma]$ is the set of elements of $\cs$ having~$\sigma$ as a prefix. An element~$x \in \cs$ will commonly be identified with the real $\sum_n 2^{-x(n)-1}$,  which belongs to $[0,1]$, and elements of $\cs$ will sometimes be referred to as \emph{reals}.  We can define an ordering $\leq$ on $\cs$ in terms of the standard ordering $\leq$ on [0,1], so that given $x,y\in\cs$, $x\leq y$ if and only if $\sum_n 2^{-x(n)-1}\leq\sum_n 2^{-y(n)-1}$ (caveat: $\leq$ should not be confused with $\preceq$).
 
 
\section{Background}\label{sec:background}

Although Demuth's theorem at first glance does not appear to involve computable probability measures on $\cs$ (and, in fact, Demuth did not explicitly make use of computable probability measures in the original proof of his theorem), the approach we take here will concern the computable measures that are induced by Turing functionals, both total Turing functionals (which are also known as $tt$-functionals) and Turing functionals that are almost total, in a sense we will specify shortly.  To this end, in this section we will review the relevant material on computable measures and Turing functionals, as well as the various notions of effective randomness considered in this paper.

\subsection{On computable measures}


A probability measure on $\cs$ assigns to each Borel set a real in $[0,1]$. It is however sufficient to consider the restriction of probability measures to cylinders. Indeed, Caratheodory's theorem from classical measure theory ensures that a function~$\mu$ defined on cylinders and satisfying for all~$\sigma$ the identity $\mu([\sigma])=\mu([\sigma0])+\mu([\sigma1])$ can be uniquely extended to a probability measure. We can therefore represent measures as functions from strings to reals, where for all~$\sigma\in\str$, $\mu(\sigma)$ is the measure of the cylinder~$[\sigma]$. This concise representation also allows us to talk about \emph{computable} probability measures. 

\begin{definition}
A probability measure $\mu$ on $\cs$ is \emph{computable} if $\sigma \mapsto \mu(\sigma)$ is computable as a real-valued function, i.e. if there is a computable function $\appmu:\str\times\omega\rightarrow\binrat$ such that
\[|\mu(\sigma)-\appmu(\sigma,i)|\leq 2^{-i}\]
for every $\sigma\in\str$ and $i\in\omega$.
 We further say that $\mu$ is \emph{exactly computable} if for all~$\sigma$ $\mu(\sigma) \in \binrat$ and $\sigma \mapsto \mu(\sigma)$ is computable as a function from $\str$ to $\binrat$.
\end{definition}

In what follows~$\lambda$ will refer exclusively to the Lebesgue measure on~$\cs$, where $\lambda(\sigma)=2^{-|\sigma|}$ for each $\sigma\in\str$.  For our purposes, it will be useful to identify several different types of measures.

\begin{definition}
A computable measure~$\mu$ is \emph{positive} if $\mu(\sigma)>0$ for every $\sigma\in\str$. Equivalently, $\mu$ is positive if $\mu(\mathcal{U})>0$ for every non-empty open set~$\mathcal{U}$. 
Moreover,~$\mu$ is \emph{atomic} if there is some real $a\in\cs$ such that $\mu(\{a\})>0$.  In this case, we call $a$ an \emph{atom} of $\mu$ or a $\mu$-atom. In~$\mu$ has no atoms, we call it~\emph{atomless}.  Given an atomic measure $\mu$, the collection of $\mu$-atoms will be denoted $\Atom_\mu$.
\end{definition}

An important result is the following. 

\begin{proposition}[Kautz]
A real $r$ is computable if and only if $r\in\Atom_\mu$ for some computable measure $\mu$.
\end{proposition}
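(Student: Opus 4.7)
Proof plan. The forward direction is immediate: if $r$ is computable, take $\mu$ to be the Dirac measure $\delta_r$ on $\cs$, defined on cylinders by $\delta_r([\sigma]) = 1$ if $\sigma \preceq r$ and $0$ otherwise. Since $r$ is computable, the map $\sigma \mapsto \delta_r([\sigma])$ is computable (in fact exactly computable, taking values in $\{0,1\}$), so $\delta_r$ is a computable measure, and clearly $r \in \Atom_{\delta_r}$.

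For the converse, suppose $r \in \Atom_\mu$ with $\mu$ computable, and set $\epsilon := \mu(\{r\}) > 0$. The plan is to exhibit $r$ as the sole point of some singleton $\Pi^0_1$ class and then invoke the classical fact that the unique member of such a class is computable. First, pick any rational $q$ with $0 < q < \epsilon$ and consider
\[ C_q := \{x \in \cs : \mu(\{x\}) \geq q\}. \]
Since $\mu(\{x\}) = \inf_n \mu([x\uh n])$ (the measures of nested cylinders form a non-increasing limit), we can rewrite $C_q = \{x : \forall n\; \mu([x\uh n]) \geq q\}$. Because $\mu$ is computable, the condition $\mu([\sigma]) < q$ is $\Sigma^0_1$ uniformly in $\sigma$, so the complement of $C_q$ is effectively open and $C_q$ is a $\Pi^0_1$ class. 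Moreover, $C_q$ is finite with $|C_q| \leq \lfloor 1/q \rfloor$, as its elements are distinct points of $\mu$-mass at least $q$.

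Since $r \in C_q$ and finite subsets of $\cs$ are discrete, $r$ is isolated in $C_q$: there exists $\sigma \preceq r$ with $C_q \cap [\sigma] = \{r\}$. The intersection $C_q \cap [\sigma]$ is again a $\Pi^0_1$ class, now with $r$ as its unique element. The classical argument --- enumerate the non-extendible nodes of any defining computable tree and, at each level $n$, wait until exactly one string of length $n$ remains extendible --- then computes $r \uh n$ for every $n$, proving $r$ computable.

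The main conceptual point (rather than a technical obstacle) is that neither $q$ nor the prefix $\sigma$ can be obtained effectively from $\mu$ and a name for $r$, since $\epsilon$ itself need not be a computable real. However, what is being proved is the non-uniform statement that $r$ is a computable point; the mere existence of a rational $q$ below $\epsilon$, guaranteed by $\epsilon > 0$, suffices to supply an algorithm, even if producing that algorithm from $\mu$ alone is not possible.
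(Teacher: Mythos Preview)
Your proof is correct. Note, however, that the paper does not actually supply a proof of this proposition: it is stated as a known result attributed to Kautz and left unproved. So there is no ``paper's own proof'' to compare against.

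For the record, your argument is the standard one. The forward direction via the Dirac measure is immediate. For the converse, the key observations are exactly those you make: (i) for a computable measure and a rational threshold $q$, the set $\{x : \mu(\{x\}) \geq q\}$ is a $\Pi^0_1$ class because the strict inequality $\mu([\sigma]) < q$ is uniformly $\Sigma^0_1$; (ii) this class is finite (cardinality at most $\lfloor 1/q\rfloor$); and (iii) an isolated point of a $\Pi^0_1$ class is computable. Your closing remark about non-uniformity is also apt: one cannot effectively produce $q$ or the isolating prefix $\sigma$ from a description of $\mu$, but that does not affect the non-uniform conclusion being proved.
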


\subsection{Strong reductions and induced measures}

Strong reductions play a central role in the discussion that follows, and more generally in the study of effective randomness.

\begin{definition}
A Turing functional $\Phi: \subseteq \cs\rightarrow\cs$ is
\begin{enumerate}
\item \emph{almost total} if $\lambda(\text{dom}(\Phi))=1$;
\item a \emph{truth-table functional} if $\Phi$ is total;
\item a \emph{weak truth-table functional} if there is some computable function $\phi$ that bounds the use of $\Phi$; and
\item \emph{non-decreasing} if for all $x \leq y$, if both $\Phi(x)$ and $\Phi(y)$ are defined, then $\Phi(x) \leq \Phi(y)$. 
\end{enumerate}
\end{definition}

It is immediate that every truth-table functional is almost total.  Moreover, every truth-table functional is also a weak truth-table functional.
Almost total functionals are important for the study of effective randomness, as we can use them to define computable measures.  

\begin{definition}
Given an almost total functional $\Phi:\cs \rightarrow \cs$, the measure induced by $\Phi$, denoted $\lambda_\Phi$, is defined to be
\[\lambda_\Phi(\mathcal{X})=\lambda(\Phi^{-1}(\mathcal{X}))\]
for every measurable $\mathcal{X}\subseteq\cs$.  
\end{definition}

In general, we can consider the measure induced by a probability measure $\mu$ and a functional $\Phi$, but we have to make the further
restriction that $\Phi$ is almost total \emph{with respect to $\mu$}, i.e., $\mu(\text{dom}(\Phi))=1$.  
Thus we have the following definition.

\begin{definition}
A Turing functional $\Phi: \cs\rightarrow\cs$ is $\mu$-almost total if $\mu(\text{dom}(\Phi))=1$.  Further,
given a measure~$\mu$ on $\cs$ and a $\mu$-almost total functional $\Phi:\cs \rightarrow \cs$, the measure \emph{induced by~$(\mu,\Phi)$}, denoted $\mu_\Phi$, is defined to be
\[\mu_\Phi(\mathcal{X})=\mu(\Phi^{-1}(\mathcal{X}))\]
for every $\mu$-measurable $\mathcal{X}\subseteq\cs$.  
\end{definition}

Intuitively, $\mu_\Phi$ can be computed if $\mu$ and $\Phi$ are given. This is formalized by the following lemma. 

\begin{lemma} \label{lem:comp-induced-measure}
For a given measure~$\mu$ on~$\cs$ and a functional $\Phi: \cs \rightarrow \cs$, the following hold.
\begin{enumerate}
\item If $\mu$ is computable and $\Phi$ is a $\mu$-almost total reduction, then~$\mu_\Phi$ is computable.
\item If $\mu$ is exactly computable and $\Phi$ is a $tt$-reduction, then $\mu_\Phi$ is exactly computable.
\end{enumerate}
\end{lemma}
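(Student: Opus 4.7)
\emph{Approach for (1).} The plan is to observe that $\Phi^{-1}([\sigma])$ is effectively open, namely $\bigcup\{[\tau] : \Phi(\tau) \succeq \sigma\}$, and to approximate $\mu_\Phi(\sigma) = \mu(\Phi^{-1}([\sigma]))$ from below while simultaneously approximating from below the measure of the complementary preimage $\Phi^{-1}(\cs \setminus [\sigma]) = \bigcup\{[\tau] : \Phi(\tau) \text{ is incompatible with } \sigma\}$. Concretely, at stage $s$ I would compute finite antichains $A_s, B_s$ of strings on which $\Phi$ has converged within $s$ steps to an output extending, respectively incompatible with, $\sigma$, and set $V^0_s = \sum_{\tau \in A_s} \mu(\tau)$ and $V^1_s = \sum_{\tau \in B_s} \mu(\tau)$. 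These sequences are non-decreasing with $V^0_s \to \mu_\Phi(\sigma)$ and $V^1_s \to \mu(\Phi^{-1}(\cs \setminus [\sigma]))$. Since $\dom(\Phi)$ is the disjoint union of these two preimages and $\mu(\dom(\Phi)) = 1$, we get $V^0_s + V^1_s \to 1$. Given target precision $2^{-i}$, I would search effectively (using the computability of $\mu$ to approximate the $V^j_s$ within any prescribed error) for a stage at which $V^0_s + V^1_s > 1 - 2^{-(i+1)}$; at such a stage $V^0_s$ lies within $2^{-(i+1)}$ of $\mu_\Phi(\sigma)$, so combining this with a $2^{-(i+1)}$-accurate dyadic approximation to $V^0_s$ yields a $2^{-i}$-approximation to $\mu_\Phi(\sigma)$.

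\emph{Approach for (2).} If $\Phi$ is a $tt$-functional, then totality means that $\{[\tau] : |\Phi(\tau)| \geq |\sigma|\}$ is an open cover of the compact space $\cs$, so a finite subcover exists. Using the fact that a finite antichain of strings covers $\cs$ iff every sufficiently long string extends some member of it---a decidable property---I would dovetail the computation of $\Phi$ on strings of increasing length until I obtain a finite antichain $C$ of strings $\tau$ with $|\Phi(\tau)| \geq |\sigma|$ that covers $\cs$; without loss of generality all members of $C$ may be taken to share a common length $m = m(\sigma)$. Then
\[
\Phi^{-1}([\sigma]) = \bigsqcup\{[\tau] : |\tau| = m,\ \Phi(\tau) \succeq \sigma\},
\]
a clopen set, and hence $\mu_\Phi(\sigma) = \sum\{\mu(\tau) : |\tau| = m,\ \Phi(\tau) \succeq \sigma\}$. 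By exact computability of $\mu$, each summand is a dyadic rational produced computably from $\tau$, so the finite sum is a dyadic rational computable from $\sigma$.

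\emph{Main difficulty.} The delicate point is part (1): the convergence $V^0_s + V^1_s \to 1$ holds only in the limit, with no a priori rate, so the search for an acceptable stage must be cast as a $\Sigma^0_1$ search phrased in terms of rational approximations to $\mu(\tau)$ rather than exact values. The sole reason this search is guaranteed to halt is $\mu$-almost totality. By contrast, part (2) is essentially mechanical once one accepts that totality of $\Phi$ supplies an effective form of the compactness of $\cs$.
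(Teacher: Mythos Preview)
Your proposal is correct. The underlying ideas coincide with the paper's, though the packaging differs slightly in each part.

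For (1), both arguments rest on the same observation: $\mu(\Phi^{-1}([\sigma]))$ is left-c.e.\ uniformly in $\sigma$, and $\mu$-almost totality supplies a complementary left-c.e.\ quantity that pins it down. You carry this out directly, comparing $\mu_\Phi([\sigma])$ against $\mu(\Phi^{-1}(\cs\setminus[\sigma]))$ with sum~$1$. The paper instead proceeds inductively down the binary tree: knowing $\mu_\Phi(\sigma)$ is computable, and that $\mu_\Phi(\sigma0)$, $\mu_\Phi(\sigma1)$ are each left-c.e.\ with sum $\mu_\Phi(\sigma)$, it concludes both children are computable. Your version is arguably more explicitly uniform in $\sigma$; the paper's is a touch slicker but leaves uniformity implicit. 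For (2), both arguments compute a finite level at which $\Phi$ has decided whether the output extends $\sigma$, then sum the (dyadic) $\mu$-values there. The paper invokes a computable use bound $\phi$ and takes the level to be $\phi(|\sigma|)$; you instead extract the level from compactness of $\cs$ and totality of $\Phi$. These are equivalent, your route simply avoids appealing to the Nerode-style characterisation of $tt$-functionals via computable use.
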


\begin{proof}
See Appendix \ref{appendix}.
\end{proof}


A useful fact is that the induced measure $\mu_\Phi$ as defined above shares certain features of the original measure $\mu$ as long as the functional $\Phi$ satisfies some additional property:

\begin{lemma} \label{lem:an-induced-measure}
For a given measure~$\mu$ on~$\cs$ and a $\mu$-almost total functional $\Phi: \cs \rightarrow \cs$, the following hold.
\begin{enumerate}
\item If $\mu$ is atomless and $\Phi$ is one-to-one, then~$\mu_\Phi$ is atomless.
\item If $\mu$ is positive and $\Phi$ is onto, then $\mu_\Phi$ is positive.
\end{enumerate}
\end{lemma}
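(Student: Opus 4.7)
The plan for part (1) is to unpack the definition directly: for any $a \in \cs$, we have $\mu_\Phi(\{a\}) = \mu(\Phi^{-1}(\{a\}))$. Because $\Phi$ is one-to-one on its domain, the preimage $\Phi^{-1}(\{a\})$ contains at most one point. Since $\mu$ is atomless, by definition it assigns measure zero to every singleton (and, trivially, to the empty set), so $\mu_\Phi(\{a\}) = 0$ for every $a$. This part should be essentially immediate and requires no further ingredients.

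For part (2), I would reduce positivity of $\mu_\Phi$ to showing that $\mu_\Phi([\sigma]) > 0$ for every $\sigma \in \str$, which suffices because every non-empty open set in $\cs$ contains some cylinder. Fix $\sigma \in \str$. Since $\Phi$ is onto, there is some $x \in \dom(\Phi)$ with $\sigma \preceq \Phi(x)$. Because $\Phi$ is induced by a Turing functional, $\Phi(x)$ is obtained as a limit of outputs on initial segments of $x$, so there is a sufficiently long initial segment $\tau \preceq x$ on which the underlying monotone string functional already outputs an extension of $\sigma$. By the monotonicity built into the definition of a Turing functional, $\Phi(y) \in [\sigma]$ for every $y \in [\tau] \cap \dom(\Phi)$, so $[\tau] \cap \dom(\Phi) \subseteq \Phi^{-1}([\sigma])$.

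To finish, I would use positivity and $\mu$-almost totality together: $\mu([\tau]) > 0$ since $\mu$ is positive, and $\mu(\dom(\Phi)) = 1$ gives $\mu([\tau] \cap \dom(\Phi)) = \mu([\tau]) > 0$, hence
\[
\mu_\Phi([\sigma]) \;=\; \mu(\Phi^{-1}([\sigma])) \;\geq\; \mu([\tau] \cap \dom(\Phi)) \;>\; 0.
\]
The only point needing any care is the step that pulls the positivity of $\mu$ through $\Phi$, where one must invoke the continuity (monotonicity on strings) of a Turing functional rather than treat $\Phi$ as an arbitrary measurable surjection; once this is in place, the rest is routine measure-theoretic bookkeeping and I do not expect any genuine obstacle.
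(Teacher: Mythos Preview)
Your proposal is correct and follows essentially the same approach as the paper. Part (1) is identical; for part (2) the paper argues more tersely that $\Phi^{-1}(\mathcal{U})$ is a non-empty open set modulo the $\mu$-null set on which $\Phi$ diverges, while you unpack this explicitly at the level of cylinders and initial segments --- but the underlying idea (continuity of $\Phi$ on its domain plus $\mu$-almost totality plus positivity of $\mu$) is the same.
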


\begin{proof}
Suppose $\mu$ is atomless and $\Phi$ is one-to-one. Then for all~$x$, $\mu_\Phi(\{x\}) = \mu(\Phi^{-1}(\{x\}))$. Since $\Phi$ is one-to-one, $\Phi^{-1}(\{x\})$ is either empty or is a singleton; $\mu$ being atomless, it follows in either case that $\mu(\Phi^{-1}(\{x\}))=0$.

Suppose now that $\mu$ is positive and $\Phi$ is onto. Let $\mathcal{U}$ be a non-empty open set. Since~$\Phi$ is onto (and continuous on its domain), $\Phi^{-1}(\mathcal{U})$ is a non-empty open set (modulo a set of measure~$0$ on which $\Phi$ is not defined), and therefore is has positive $\mu$-measure. 
\end{proof}

\subsection{Notions of randomness}

Although there are many definitions of effective randomness for elements of $\cs$, we restrict our attention to three of the most important definitions:  Martin-L\"of randomness, Schnorr randomness, and computable randomness.  Each of these definitions can be characterized in one of several ways, but we will restrict our attention in this paper to the measure-theoretic formulations of Martin-L\"of randomness and Schnorr randomness and the formulation of computable randomness in terms of certain computable betting strategies known as martingales.  For more details, see \cite{DowneyH2010} or \cite{Nies2009}.

\begin{definition}
Given a computable measure $\mu$, a $\mu$-Martin-L\"of test is a uniformly computable sequence $\{\mathcal{U}_i\}_{i\in\omega}$ of effectively open classes in $\cs$ such that $\mu(\mathcal{U}_i)\leq 2^{-i}$ for every $i\in\omega$.  Further, a real $x$ is $\mu$-Martin-L\"of random if for every $\mu$-Martin-L\"of test $\{\mathcal{U}_i\}_{i\in\omega}$, we have $x\notin\bigcap_{i\in\omega}\mathcal{U}_i$.  The collection of $\mu$-Martin-L\"of random reals will be written as $\MLR_\mu$. 
\end{definition}

As is well-known, for every computable measure $\mu$, there is a universal $\mu$-Martin-L\"of test, which we will write as $\{\widehat{\mathcal{U}}_i\}_{i\in\omega}$.

\begin{proposition}
For every computable measure $\mu$, there is a Martin-L\"of test $\{\widehat{\mathcal{U}}_i\}_{i\in\omega}$ such that $x\in\MLR_\mu$ if and only if $x\notin\bigcap_{i\in\omega}\widehat{\mathcal{U}}_i$.
\end{proposition}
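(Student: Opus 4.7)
The plan is to construct a universal $\mu$-Martin-Löf test by a standard enumeration and trimming argument. First, I would fix an effective enumeration $\{\mathcal{V}^e\}_{e\in\omega}$ of all uniformly c.e.\ sequences of effectively open sets $\mathcal{V}^e = \{\mathcal{V}^e_i\}_{i\in\omega}$, obtained for instance by running all Turing machines in parallel. Every $\mu$-Martin-Löf test occurs as some $\mathcal{V}^e$, but many $e$'s will yield sequences that fail the measure bound $\mu(\mathcal{V}^e_i)\leq 2^{-i}$.

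Second, I would define a trimmed version $\widetilde{\mathcal{V}}^e_i$ of each $\mathcal{V}^e_{i+1}$ as follows. The set $\mathcal{V}^e_{i+1}$ is enumerated as a c.e.\ union of cylinders $[\sigma_0],[\sigma_1],\dots$ Using the fact that $\mu$ is computable (so that $\mu$ of any finite disjoint union of cylinders can be approximated to any precision $2^{-k}$), I would add the cylinder $[\sigma_s]$ to $\widetilde{\mathcal{V}}^e_i$ only if the current approximation certifies $\mu(\widetilde{\mathcal{V}}^e_i \cup [\sigma_s]) < 2^{-i}$. By construction, $\mu(\widetilde{\mathcal{V}}^e_i)\leq 2^{-i}$, and if $\mathcal{V}^e$ is a genuine $\mu$-Martin-Löf test, then $\mu(\mathcal{V}^e_{i+1})\leq 2^{-(i+1)}<2^{-i}$, so the trimming never actually discards anything and $\widetilde{\mathcal{V}}^e_i = \mathcal{V}^e_{i+1}$.

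Third, I would define the universal test by
\[
\widehat{\mathcal{U}}_i \;=\; \bigcup_{e\in\omega} \widetilde{\mathcal{V}}^e_{i+e+1}.
\]
This is uniformly c.e.\ in $i$, and $\mu(\widehat{\mathcal{U}}_i)\leq \sum_{e\in\omega} 2^{-(i+e+1)}=2^{-i}$, so $\{\widehat{\mathcal{U}}_i\}_{i\in\omega}$ is itself a $\mu$-Martin-Löf test. For the universality direction, if $x\notin\MLR_\mu$, some bona-fide $\mu$-ML-test $\mathcal{V}^e$ captures $x$ at every level; by the second step, $\widetilde{\mathcal{V}}^e_{i+e+1}=\mathcal{V}^e_{i+e+2}$, so $x\in \widetilde{\mathcal{V}}^e_{i+e+1}\subseteq\widehat{\mathcal{U}}_i$ for every $i$. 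Conversely, if $x$ lies in $\bigcap_i\widehat{\mathcal{U}}_i$ then $x$ is caught by the universal test itself, which we just verified is a $\mu$-ML-test, so $x\notin\MLR_\mu$.

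The main obstacle is the trimming step: since $\mu$ is only computable (not exactly computable), we cannot decide in finite time whether $\mu$ of a finite union of cylinders equals exactly $2^{-i}$. The shift by one level (using $\mathcal{V}^e_{i+1}$ to produce $\widetilde{\mathcal{V}}^e_i$) gives a strict gap between the target bound $2^{-i}$ and the actual measure $\leq 2^{-(i+1)}$ of a valid test, which is what lets us confirm the strict inequality with a finite approximation of $\mu$ and hence keep all the cylinders. This shift is the key device that reconciles the c.e.\ (rather than computable) nature of open-set measures with the strict requirement $\mu(\mathcal{U}_i)\leq 2^{-i}$.
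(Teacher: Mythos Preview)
The paper does not actually supply a proof of this proposition; it is stated as a well-known fact and left without argument. Your proposal is the standard construction and is correct: the enumeration of all candidate tests, the trimming step with the one-level shift to create a strict gap, and the diagonal union $\widehat{\mathcal{U}}_i=\bigcup_e \widetilde{\mathcal{V}}^e_{i+e+1}$ together yield a $\mu$-Martin-L\"of test that captures exactly the non-$\mu$-random reals.

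One minor point worth tightening in the write-up: the phrase ``only if the current approximation certifies $\mu(\widetilde{\mathcal{V}}^e_i\cup[\sigma_s])<2^{-i}$'' should be made precise. What you need is that the condition ``$\mu(F)<2^{-i}$'' for a finite union $F$ of cylinders is $\Sigma^0_1$ in the data (compute approximations with ever-better precision and accept once one of them, plus its error, falls strictly below $2^{-i}$). As written, one might read ``current approximation'' as a single fixed-precision check per cylinder, which could erroneously reject cylinders. With that clarification, the argument is complete, and your observation that the shift from level $i+1$ to level $i$ produces the strict inequality $2^{-(i+1)}<2^{-i}$ needed for the $\Sigma^0_1$ certification is exactly the right point.
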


The following result, though rather simple, is very useful, for it allows us to replace a non-positive measure $\mu$ with a positive measure $\nu$ without losing any of the $\mu$-Martin-L\"of random reals.

\begin{lemma}\label{lem:pos-meas}
If $\mu$ is a computable measure, and if $x\in\MLR_\mu$, then $x\in\MLR_{\frac{\mu+\lambda}{2}}$.
\end{lemma}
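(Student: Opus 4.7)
The plan is to prove the contrapositive: if $x \notin \MLR_{\nu}$ for $\nu := \frac{\mu+\lambda}{2}$, then $x \notin \MLR_\mu$. First I would observe that $\nu$ is indeed a computable measure: computability of $\sigma \mapsto \nu(\sigma)$ follows from the fact that both $\mu$ and $\lambda$ are computable, and $\nu$ is clearly a probability measure (the average of two probability measures). This makes it meaningful to talk about $\nu$-Martin-L\"of tests in the first place.

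Next I would unpack what it means for $x \notin \MLR_\nu$. By definition, there exists a $\nu$-Martin-L\"of test $\{\mathcal{U}_i\}_{i \in \omega}$, i.e., a uniformly computable sequence of effectively open classes with $\nu(\mathcal{U}_i) \leq 2^{-i}$, such that $x \in \bigcap_i \mathcal{U}_i$. The key observation is the pointwise bound $\mu(\mathcal{X}) \leq 2\nu(\mathcal{X})$ for any measurable $\mathcal{X}$, which is immediate from the definition $\nu = \frac{\mu+\lambda}{2}$ and the fact that $\lambda(\mathcal{X}) \geq 0$.

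From this bound, $\mu(\mathcal{U}_i) \leq 2\nu(\mathcal{U}_i) \leq 2^{-i+1}$. So if I reindex by setting $\mathcal{V}_i := \mathcal{U}_{i+1}$, the sequence $\{\mathcal{V}_i\}_{i\in\omega}$ is still uniformly computable and satisfies $\mu(\mathcal{V}_i) \leq 2^{-i}$, hence is a $\mu$-Martin-L\"of test. Since $x$ lies in every $\mathcal{U}_i$ it lies in every $\mathcal{V}_i$, so $x \notin \MLR_\mu$. This completes the contrapositive and hence the lemma.

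There is no real obstacle here; the argument is essentially bookkeeping about how a test for $\nu$ converts, with a constant factor absorbed into a shift of indices, into a test for $\mu$. The only subtle point worth flagging is ensuring computability of the new measure $\nu$, but this is routine from the hypotheses.
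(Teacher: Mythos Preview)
Your proposal is correct and follows essentially the same approach as the paper's proof: both argue the contrapositive, use the bound $\mu(\mathcal{U}_i)\leq 2\cdot\frac{\mu+\lambda}{2}(\mathcal{U}_i)\leq 2^{-i+1}$, and absorb the extra factor of~$2$ by shifting indices. Your added remark that $\nu=\frac{\mu+\lambda}{2}$ is computable is a reasonable point of hygiene that the paper leaves implicit.
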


\begin{proof}
Suppose $x\notin\MLR_{\frac{\mu+\lambda}{2}}$.  Then if $\{\mathcal{U}_i\}_{i\in\omega}$ is a $\frac{\mu+\lambda}{2}$-Martin-L\"of test such that $x\in\bigcap_{i\in\omega}\mathcal{U}_i$, it follows that
\[\frac{\mu(\mathcal{U}_i)+\lambda(\mathcal{U}_i)}{2}\leq 2^{-i}\]
and hence
\[\mu(\mathcal{U}_i)\leq\mu(\mathcal{U}_i)+\lambda(\mathcal{U}_i)\leq 2^{-i+1}.\]
Thus $\{\mathcal{U}_i\}_{i\geq1}$ is a $\mu$-Martin-L\"of test containing $x$, and hence $x\notin\MLR_\mu$.
\end{proof}

We now introduce Schnorr randomness and computable randomness.

\begin{definition}
Given a computable measure $\mu$, a $\mu$-Schnorr test is a uniformly computable sequence $\{\mathcal{U}_i\}_{i\in\omega}$ of effectively open classes in $\cs$ such that $\mu(\mathcal{U}_i)= 2^{-i}$ for every $i\in\omega$.  Further, a real $x$ is $\mu$-Schnorr if for every $\mu$-Schnorr test $\{\mathcal{U}_i\}_{i\in\omega}$, we have $x\notin\bigcap_{i\in\omega}\mathcal{U}_i$.  The collection of $\mu$-Schnorr random reals will be written as $\SR_\mu$.
\end{definition}

\begin{definition}
A computable $\mu$-martingale is a computable function $d:\str\rightarrow\mathbb{R}^{\geq 0}$ such that for every $\sigma\in\str$,
\[\mu(\sigma)d(\sigma)=\mu(\sigma0)d(\sigma0)+\mu(\sigma1)d(\sigma1).\]
Moreover, a computable $\mu$-martingale $d$ succeeds on $x\in\cs$ if 
\[\limsup_{n\rightarrow\infty}d(x\uh n)=\infty.\]
\end{definition}

\begin{definition}
$x\in\cs$ is computably random if there is no computable $\mu$-martingale $d$ that succeeds on $x$.  The collection of $\mu$-computably random reals will be written as $\CR_\mu$.\\
\end{definition}

When we speak about a Martin-L\"of random real (resp.\ computably random, Schnorr random) without specifying the measure, we implicitly mean ``random with respect to the Lebesgue measure". Accordingly, we denote by $\MLR$ (resp. $\CR$, $\SR$) the set of Martin-L\"of random (resp.\ computably random, Schnorr random) reals with respect to Lebesgue measure. \\

It is well known that $\MLR_\mu\subseteq \CR_\mu\subseteq \SR_\mu$ for every computable measure $\mu$ (the inclusions being strict for Lebesgue measure), a result that we will make use of in Section \ref{sec:Demuth-for-others}.  Moreover, we will make use of the following two important results.  The first result shows that every Schnorr random real that is not Martin-L\"of random (and hence every computably random real that is not Martin-L\"of random) must be high, and the second provides a converse to this result:  for every high degree, we can find a computably random real (and hence a Schnorr random real) in that degree.  (Recall that a real $x$ is high if $x'\geq_T\emptyset''$ or equivalently if $x$~computes a function~$g$ which dominates all computable functions).

\begin{theorem}[\cite{NiesST2005}]\label{thm:schnorr-high}
For any computable measure $\mu$, if $x\in\SR_\mu\setminus\MLR_\mu$, then~$x$ must be high.
\end{theorem}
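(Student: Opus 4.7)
The plan is to show that $x$ computes a function dominating every computable function, which by the remark following the statement is equivalent to highness. Since $x \notin \MLR_\mu$, fix a universal $\mu$-Martin-L\"of test $\{\widehat{\mathcal{U}}_n\}_{n\in\omega}$ with $x \in \bigcap_n \widehat{\mathcal{U}}_n$. Write each level as an effective increasing union $\widehat{\mathcal{U}}_n = \bigcup_s \mathcal{U}_{n,s}$ of clopen sets (finite unions of cylinders). Define a function $f\colon \omega \to \omega$ by $f(n) = \min\{s : x \in \mathcal{U}_{n,s}\}$. Since $x \in \widehat{\mathcal{U}}_n$ this is total, and since $x$ can decide membership in each clopen $\mathcal{U}_{n,s}$, we have $f \leq_T x$.

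The heart of the argument is to verify that $f$ dominates every computable function. Suppose, toward contradiction, that some computable $g$ satisfies $g(n) > f(n)$ for infinitely many $n$. For each $n$, let $\mathcal{W}_n = \mathcal{U}_{n,g(n)}$, a clopen set with $\mu(\mathcal{W}_n) \leq \mu(\widehat{\mathcal{U}}_n) \leq 2^{-n}$ (uniformly computable because $\mu$ is computable), satisfying $x \in \mathcal{W}_n$ for infinitely many $n$. Set $\mathcal{V}_m = \bigcup_{n \geq m} \mathcal{W}_n$. Then $\mathcal{V}_m$ is effectively open uniformly in $m$, $x \in \mathcal{V}_m$ for every $m$, and $\mu(\mathcal{V}_m) \leq 2^{-m+1}$. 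Crucially, $\mu(\mathcal{V}_m)$ is uniformly computable in $m$: the partial union $\bigcup_{m \leq n \leq N} \mathcal{W}_n$ is clopen with computable measure, and the omitted tail has measure at most $\sum_{n > N} 2^{-n} = 2^{-N}$, yielding an effective rate of convergence.

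After reindexing (replacing $\mathcal{V}_m$ by $\mathcal{V}_{m+1}$) and padding each $\mathcal{V}_m$ up to measure exactly $2^{-m}$ by adjoining an effectively open set of cylinders of appropriate computable measure disjoint from $\mathcal{V}_m$, the sequence $\{\mathcal{V}_m\}_{m\in\omega}$ becomes a $\mu$-Schnorr test containing $x$, contradicting $x \in \SR_\mu$. Hence $f$ dominates every computable function and $x$ is high. The main obstacle I anticipate is the padding step: for a general computable (possibly non-positive) measure $\mu$ one must verify that any effectively open set of computable measure strictly less than $2^{-m}$ can be enlarged to one of measure exactly $2^{-m}$. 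This is standard but requires some care; alternatively, one can appeal to the well-known equivalence between the ``$\leq 2^{-i}$ with uniformly computable measures'' and the ``$= 2^{-i}$'' formulations of Schnorr tests, which renders the padding step unnecessary.
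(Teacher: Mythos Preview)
The paper does not supply a proof of this result; it cites \cite{NiesST2005} and merely remarks that ``the entire argument goes through when $\mu$ is taken to be an arbitrary computable measure.'' Your proposal is exactly that standard Nies--Stephan--Terwijn argument, so it matches the paper's (implicit) approach and is correct.

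Your worry about the padding step is justified and is more than a technicality: with the paper's strict requirement $\mu(\mathcal{U}_i)=2^{-i}$, padding can genuinely fail for atomic $\mu$ (for instance, if $\mu$ is a Dirac mass then no open set has measure $2^{-i}$ for $i\geq 1$, so there are no $\mu$-Schnorr tests at all and the statement as literally read becomes false). The fix you already mention---working with the equivalent formulation in which one only requires $\mu(\mathcal{U}_i)\leq 2^{-i}$ with $\mu(\mathcal{U}_i)$ uniformly computable---is the correct one, and your construction of $\{\mathcal{V}_m\}$ already delivers such a test without any padding.
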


\begin{theorem}[\cite{NiesST2005}]\label{thm:high-cr}
For every high Turing degree $\mathbf{a}$, there is a $x\in\CR$ such that $x\in\mathbf{a}$ (moreover, $x$ can be taken outside $\MLR$).
\end{theorem}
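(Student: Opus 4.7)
By Martin's characterization of high degrees, I would fix $A \in \mathbf{a}$ together with an $A$-computable function $g$ dominating every total computable function, and then $A$-recursively build a real $x \equiv_T A$ belonging to $\CR \setminus \MLR$. Writing $x = \lim_s \sigma_s$ as a nested chain of prefixes, the construction simultaneously attends to three kinds of requirements: diagonalizing against every total computable $\lambda$-martingale (to get $x \in \CR$), coding the bits of $A$ at designated positions (to get $A \leq_T x$), and placing computable patterns at designated positions so that $x$ fails an explicit Martin-L\"of test.

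For the martingale diagonalization, I would enumerate all total computable $\lambda$-martingales $d_0, d_1, \dots$, pass to their savings-trick variants $\widetilde{d}_e$ (computable martingales such that $d_e$ succeeds on $y$ iff $\sup_n \widetilde{d}_e(y \uh n) = \infty$), and consider the weighted sum $D = \sum_e 2^{-(e+2)} \widetilde{d}_e$, itself a computable martingale with $D(\varnothing) \leq 1$. Bounding $\sup_n D(x \uh n)$ by a constant $C$ automatically bounds each $\widetilde{d}_e$ by $2^{e+2}C$ and hence defeats every $d_e$. At stage $s$, given $\sigma_s$, I would search for an extension $\tau \succeq \sigma_s$ of some length $n_s$ such that $D(\tau') \leq C$ for every $\tau' \preceq \tau$ and such that the cylinder $[\tau]$ still carries positive $\lambda$-measure of paths along which $D$ remains bounded by $C$; by Doob's maximal inequality, candidate extensions exist at every sufficiently long length, and the least adequate $n_s$ is a total computable function of $\sigma_s$, hence dominated by $g$, so the search terminates in $A$-computable time. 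Within $\tau$ I would place the bit $A(s)$ and the block $0^s$ at designated positions, serving the coding and non-ML-random requirements.

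The verification is then routine: the construction is $A$-recursive, $A$ is decoded $x$-computably from the designated coding positions (so $x \equiv_T A$), the stagewise $D$-bound gives $\sup_n D(x \uh n) \leq C$ and hence $x \in \CR$, and the $0^s$-blocks at $A$-computable positions yield an explicit Martin-L\"of test failed by $x$, so $x \notin \MLR$.

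The main obstacle is accommodating the deterministic coding and zero-pattern bits inside $\tau$ without inflating $D$ past $C$: fixing $k$ bits restricts to a sub-cylinder of $\lambda$-measure $2^{-k}$, so $C$ and the placement of designated positions must be carefully calibrated (taking those positions sufficiently sparse, so that the cumulative log-measure loss is summable, suffices). The key role of the high-domination function $g$ throughout is to turn the merely-existential search for a suitable $\tau$ at each stage into an $A$-effective procedure; without it, the construction could be carried out in the limit but not in any single Turing degree.
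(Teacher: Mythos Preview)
The paper does not actually prove this theorem; it is quoted from \cite{NiesST2005} and used as a black box in Section~\ref{sec:Demuth-for-others}. So there is no ``paper's proof'' to compare against, only the original Nies--Stephan--Terwijn argument.

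Your proposal, however, contains a genuine gap at the very first step of the martingale diagonalization. You write that you ``enumerate all total computable $\lambda$-martingales $d_0,d_1,\dots$'' and then form $D=\sum_e 2^{-(e+2)}\widetilde d_e$, asserting that $D$ is ``itself a computable martingale''. It is not. The indices of total computable martingales form a $\Pi^0_2$ set, not a c.e.\ one, so no effective enumeration of exactly the total computable martingales exists; and there is provably no single computable martingale that succeeds on every non-computably-random real (otherwise $\CR=\MLR$). Consequently your $D$ is not computable, the stagewise search for an extension $\tau$ along which $D$ stays below $C$ is not an effective search, and invoking the dominating function $g$ to bound a ``least adequate $n_s$'' is vacuous because that quantity is not a computable function of $\sigma_s$ to begin with.

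The role of highness in the Nies--Stephan--Terwijn construction is precisely to compensate for this: one enumerates all \emph{partial} computable (rational-valued) martingales $\varphi_e$, and the $A$-computable dominating function $g$ is used to bound the running time needed to evaluate $\varphi_e$ on the relevant initial segments. For each $e$ such that $\varphi_e$ is genuinely total, $g$ eventually dominates its settling-time function, so from some stage on the construction sees the true values of $\varphi_e$ and can diagonalize against it; partial $\varphi_e$'s are harmless because they simply stop being updated. In other words, the dominating function is what lets you \emph{simulate} access to the total computable martingales one at a time, not what bounds a search against an already-computable universal object. Your coding and $0^s$-block ideas for achieving $x\equiv_T A$ and $x\notin\MLR$ are fine in spirit, but they need to be grafted onto this per-requirement diagonalization rather than onto a single amalgamated $D$.
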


We should note that Theorem \ref{thm:schnorr-high} was proven by Nies, Stephan, and Terwijn only in the case where $\mu$ is Lebesgue measure, but the entire argument goes through when $\mu$ is taken to be an arbitrary computable measure.

\section{On Demuth's theorem}\label{sec:demuth}

Informally, Demuth's Theorem tells us that if we apply an effectively continuous procedure to a Martin-L\"of random real $x$, if the resulting real $y$ has \emph{any} non-trivial computational content whatsoever (i.e.\ if it is not a computable sequence), then from $y$ we can effectively recover a Martin-L\"of random real~$z$.  Formally, the statement is as follows.

\begin{theorem}[Demuth \cite{Demuth1988}]
Let~$x$ be a Martin-L\"of random real. Suppose $x$ $tt$-computes a non-computable real~$y$. Then $y$ is Turing equivalent to some Martin-L\"of random real~$z$. 
\end{theorem}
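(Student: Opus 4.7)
The plan is to reduce the theorem to a ``measure normalization''. Write $y=\Phi(x)$ for a truth-table functional $\Phi$ and consider the induced measure $\mu:=\lambda_\Phi$; by Lemma \ref{lem:comp-induced-measure}, $\mu$ is computable. A standard pullback argument shows $y\in\MLR_\mu$: any $\mu$-Martin-L\"of test $\{V_i\}_{i\in\omega}$ yields the $\lambda$-test $\{\Phi^{-1}(V_i)\}_{i\in\omega}$ (effectively open because $\Phi$ is total, with $\lambda(\Phi^{-1}(V_i))=\mu(V_i)\leq 2^{-i}$); since $x\in\Phi^{-1}(V_i)$ iff $y\in V_i$, the Martin-L\"of randomness of $x$ forces $y\notin\bigcap_i V_i$. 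Because $y$ is non-computable, Kautz's proposition ensures that $y$ is not a $\mu$-atom.

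To guarantee positivity of cylinder measures along $y$, I would replace $\mu$ by $\nu:=(\mu+\lambda)/2$, which is computable and strictly positive. By Lemma \ref{lem:pos-meas}, $y\in\MLR_\nu$, and $\nu(\{y\})=\mu(\{y\})/2=0$, so $y$ is not a $\nu$-atom either.

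Next I would construct a computable measure-isomorphism at $y$ by defining $F:\cs\to\cs$ as the functional whose output is the real
\[F(y):=\nu\bigl(\{y'\in\cs : y'<y\}\bigr),\]
where $<$ is the order on $\cs$ from the introduction. For each $n$, one has $F(y)\in[L_n,L_n+\nu(y\uh n)]$, where $L_n$ is the sum of $\nu$-measures of the cylinders lexicographically below $[y\uh n]$ and is therefore computable from $y\uh n$ and $\nu$. Since $y$ is not a $\nu$-atom, $\nu(y\uh n)\to 0$, so these intervals pinpoint $F(y)$ and $F$ is a Turing functional at $y$. As $F$ pushes $\nu$ forward to $\lambda$, a pullback argument analogous to the one in the first paragraph shows $z:=F(y)\in\MLR$.

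For the converse reduction, I would recover $y$ from $z$ by binary search: having determined a prefix $\sigma\leqx y$, compare $z$ with the midpoint $L_{|\sigma|}+\nu(\sigma 0)$ to decide whether $\sigma 0\leqx y$ or $\sigma 1\leqx y$. Positivity of $\nu$ prevents the search from collapsing to a branch of measure zero. The only ambiguous cases arise when $z$ coincides with such a midpoint, which forces $y$ to take the form $\tau 01^\omega$ or $\tau 10^\omega$ for some $\tau\in\str$; such boundary reals are either $\nu$-atoms (excluded since $y$ is not a $\nu$-atom) or form a countable $\nu$-null set, hence effectively null and avoided by $y\in\MLR_\nu$. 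Consequently $y\equiv_T z$. The step demanding the most care is this verification that the inverse is a genuine Turing functional at $y$: it is exactly the combination of $y$ being a non-atom and $\nu$ being positive that rules out the boundary behaviour that would otherwise obstruct the binary search.
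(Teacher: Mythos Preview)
Your overall strategy is close to the paper's, and the two reductions $z \leq_T y$ (via $z = F(y)$) and $y \leq_T z$ (binary search, using positivity of $\nu$) are handled correctly. The gap is the claim that ``$F$ pushes $\nu$ forward to $\lambda$'': this requires $\nu$ to be \emph{atomless}. When $\nu$ has an atom $a$, the CDF $F(y')=\nu(\{y'':y''<y'\})$ jumps at $a$; since your $\nu$ is positive, $F$ is strictly increasing, so $\nu_F(\{F(a)\})=\nu(\{a\})>0$ while $\nu_F$ vanishes on the interval $(F(a),F(a)+\nu(\{a\}))$. Thus $\nu_F\neq\lambda$, and the pullback of a $\lambda$-test need not be a $\nu$-test. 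Your $\nu=(\mu+\lambda)/2$ is not atomless in general, since $\mu=\lambda_\Phi$ can have (computable) atoms --- the slowdown functional of Theorem~\ref{thm:slowmega1} induces a highly atomic measure, for instance. Nor can you repair this by passing to an atomless computable $\nu'$ with $y\in\MLR_{\nu'}$: Proposition~\ref{cor:wtt-Demuth} exhibits a $y=\Omega^*$ in the scope of Demuth's theorem for which no such $\nu'$ exists.

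The paper avoids the problem by going in the opposite direction. The quantile function $G$ (the Kautz conversion, Theorem~\ref{thm:Kautz-conversion}) always satisfies $\lambda_G=\mu$, atoms or not; Shen's theorem (Theorem~\ref{thm:Shen}) then supplies a $z\in\MLR$ with $G(z)=y$, and uniqueness of this preimage (from $y$ not being an atom and $G$ being non-decreasing) gives $z\leq_T y$. Your $F$ is essentially $G^{-1}$ and your $z$ coincides with the paper's, but establishing $z\in\MLR$ without atomlessness genuinely requires the preimage argument of Theorem~\ref{thm:Shen} rather than a direct pushforward.
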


It is not at all obvious why Demuth's theorem should be true. One informative way to prove it is to break it down into two results, which have been shown independently of Demuth's work. The first result is the well-known ``conservation of randomness" theorem.  Not only do total and almost total functionals induce computable measures (as discussed in the previous section), but according to the conservation of randomness, they also do so in such a way as to map reals random with respect to the original measure to reals random with respect to the induced measure.  
Formally, we have,

\begin{theorem}[Conservation of Martin-L\"of randomness]\label{thm:conservation}
Let $\mu$ be a computable measure and $\Phi$ an almost total functional.  Then $x\in\MLR_\mu$ implies $\Phi(x)\in\MLR_{\mu_\Phi}$.
\end{theorem}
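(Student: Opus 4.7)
The proof I have in mind is a standard pullback argument. The plan is to argue by contrapositive: starting from a $\mu_\Phi$-Martin-L\"of test that captures $\Phi(x)$, pull it back through $\Phi$ to obtain a $\mu$-Martin-L\"of test that captures $x$, contradicting $x\in\MLR_\mu$.

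In more detail, suppose $\Phi(x)\notin\MLR_{\mu_\Phi}$. By Lemma \ref{lem:comp-induced-measure} the measure $\mu_\Phi$ is computable, so the notion of a $\mu_\Phi$-Martin-L\"of test makes sense, and we may fix a $\mu_\Phi$-Martin-L\"of test $\{\V_i\}_{i\in\omega}$ with $\Phi(x)\in\bigcap_i \V_i$. Define $\U_i := \Phi^{-1}(\V_i)$ for each~$i$. Since $\Phi$ is a Turing functional, its domain $\dom(\Phi)$ is effectively open, and for any effectively open $\V\subseteq\cs$, the preimage $\Phi^{-1}(\V)$ is effectively open in $\cs$ (one enumerates basic clopen sets $[\sigma]$ such that $\Phi$ maps all extensions of $\sigma$ into some basic clopen $[\tau]$ enumerated in $\V$). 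This process is uniform in~$i$, so $\{\U_i\}_{i\in\omega}$ is a uniformly effectively open sequence.

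For the measure bound, directly from the definition of the induced measure,
\[
\mu(\U_i) \;=\; \mu(\Phi^{-1}(\V_i)) \;=\; \mu_\Phi(\V_i) \;\leq\; 2^{-i},
\]
so $\{\U_i\}_{i\in\omega}$ is a genuine $\mu$-Martin-L\"of test. Finally, since $\Phi(x)\in\V_i$ for every~$i$ and $x\in\dom(\Phi)$, we have $x\in\Phi^{-1}(\V_i)=\U_i$ for every~$i$, i.e., $x\in\bigcap_i\U_i$. This contradicts $x\in\MLR_\mu$, completing the proof.

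The only real point requiring care is the first step of the second paragraph: checking that the pullback of an effectively open set under a partial Turing functional is again (uniformly) effectively open. Once this is granted, everything else is bookkeeping with the definition of~$\mu_\Phi$. No assumption beyond almost totality of~$\Phi$ is used — in particular $\Phi$ need not be total, one-to-one, or onto.
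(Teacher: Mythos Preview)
Your proof is correct and follows essentially the same contrapositive pullback argument as the paper: given a $\mu_\Phi$-test capturing $\Phi(x)$, take preimages under $\Phi$ to obtain a $\mu$-test capturing $x$, using the defining identity $\mu(\Phi^{-1}(\V_i))=\mu_\Phi(\V_i)$ for the measure bound. The paper's version is marginally more explicit in working with a prefix-free presentation and the sets $\pre_\Phi(\sigma)$ to spell out the equality $\mu(\mathcal{V}_i)=\mu_\Phi(\mathcal{U}_i)$ (which it later reuses for Schnorr randomness), but the underlying argument is identical.
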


\begin{proof}
See Appendix \ref{appendix}.
\end{proof}

The second result used to derive Demuth's Theorem is sometimes referred to as ``Levin's theorem" or ``the Levin-Kautz theorem" (although Levin proved the theorem with Zvonkin, and independently of Kautz).  

\begin{theorem}[Levin/Zvonkin \cite{LevinZ1970}, Kautz \cite{Kautz1991}] \label{thm:Levin-Kautz}
If $y$ is non-computable and $\mu$-Martin-L\"of random for some computable measure $\mu$, then there is a Martin-L\"of random real $z$ such that $y\equiv_T z$.
\end{theorem}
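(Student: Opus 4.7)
My plan is to transport $y$ to a Lebesgue--Martin-L\"of random real $z$ of the same Turing degree via the cumulative distribution function $F_\mu$ of $\mu$. Two reductions will help. First, by Kautz's proposition, $y \notin \Atom_\mu$ since $y$ is non-computable. Second, by Lemma~\ref{lem:pos-meas}, we may replace $\mu$ by $(\mu + \lambda)/2$ to assume $\mu$ is positive while keeping $y$ Martin-L\"of random and a non-atom. So assume $\mu$ is positive and $y \notin \Atom_\mu$.

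Next I would introduce two Turing functionals: $\Phi_\mu$ sending $x$ to the binary expansion of $F_\mu(x) := \mu(\{w \in \cs : w \leq x\})$, and $\Psi_\mu$ sending $w$ (via arithmetic coding with respect to $\mu$) to the unique $x$ with $F_\mu(x^-) \leq w < F_\mu(x)$, determined bit by bit from $w$ using positivity of $\mu$. Since $\mu$ is computable, $\Phi_\mu$ is $\mu$-almost total and $\Psi_\mu$ is $\lambda$-almost total; a direct cylinder computation shows $\lambda_{\Psi_\mu} = \mu$, and the two functionals are mutually inverse at every non-atom of $\mu$. Setting $z := \Phi_\mu(y)$, the non-atomicity of $y$ forces $\Psi_\mu(z) = y$, giving $y \equiv_T z$ since both $\Phi_\mu$ and $\Psi_\mu$ are Turing functionals.

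The main obstacle is verifying $z \in \MLR$. Conservation of randomness (Theorem~\ref{thm:conservation}) applied to $\Psi_\mu$ gives only $\Psi_\mu(\MLR_\lambda) \subseteq \MLR_\mu$, which is the wrong direction. The reverse inclusion, restricted to non-atomic $\mu$-random points, is proved by an explicit pullback argument: given a putative $\lambda$-Martin-L\"of test $\{U_i\}$ with $z \in \bigcap U_i$, at stage $i$ one enumerates the finitely many $\mu$-atoms of mass at least $2^{-i-2}$ (which are computable from $\mu$), excises small neighbourhoods of the corresponding points $F_\mu(a)$ from $U_i$, and pulls back through $\Psi_\mu$ using $\lambda_{\Psi_\mu} = \mu$ to obtain a $\mu$-Martin-L\"of test containing $y$, contradicting $y \in \MLR_\mu$. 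The delicate bookkeeping around the atoms of $\mu$---the fact that $\mu_{\Phi_\mu}$ is not exactly Lebesgue measure due to the jumps of $F_\mu$ at $\mu$-atoms---is exactly what requires the two reductions at the start.
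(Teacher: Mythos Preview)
Your overall architecture---reduce to positive $\mu$, realize $\mu$ as $\lambda_{\Psi_\mu}$ via arithmetic coding, set $z=\Phi_\mu(y)=F_\mu(y)$, and use that $\Psi_\mu^{-1}(\{y\})=\{z\}$ for non-atoms to get $y\equiv_T z$---matches the paper's proof closely. The paper's $\Phi$ in the Kautz conversion is your $\Psi_\mu$, and the paper recovers $z\leq_T y$ by the same singleton-$\Pi^0_1(y)$-class observation.

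The divergence, and the gap, is in your argument that $z\in\MLR$. The paper does \emph{not} pull a $\lambda$-test back through $\Phi_\mu$; it invokes Theorem~\ref{thm:Shen} (Shen's theorem): from $y\in\MLR_{\lambda_{\Psi_\mu}}$ one gets some $z'\in\MLR$ with $\Psi_\mu(z')=y$, and then uniqueness of the preimage forces $z'=z$. The key construction in Shen's proof is $\mathcal V_i=\cs\setminus\Psi_\mu(\cs\setminus\mathcal U_i)$, for which $\mu(\mathcal V_i)=\lambda(\Psi_\mu^{-1}(\mathcal V_i))\leq\lambda(\mathcal U_i)$ holds automatically---no atom bookkeeping is needed at all, and the positivity reduction is unnecessary.

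Your ``excise big atoms, then pull back'' step does not work as written. Whether you intend $V_i=\Phi_\mu^{-1}(U_i')$ or $V_i=\Psi_\mu(U_i')$, the problem is the same: the atoms $a$ with $\mu(\{a\})<2^{-i-2}$ may have \emph{total} mass close to~$1$, and nothing prevents all of their images $F_\mu(a)$ from landing in $U_i'$. Concretely, $\mu$ could have $2^{i+3}$ atoms each of mass just under $2^{-i-2}$; none is excised at stage~$i$, yet if $U_i$ consists of tiny intervals around each $F_\mu(a)$ then $\lambda(U_i)\leq 2^{-i}$ while $\mu(\Phi_\mu^{-1}(U_i))$ is close to~$1$. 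There is no computable way, uniformly in~$i$, to choose a finite set of atoms whose complement has small total atomic mass (the total atomic mass of $\mu$ is only lower-semicomputable). So the ``delicate bookkeeping'' you allude to cannot be completed along these lines; replace it with the Shen argument and the proof goes through.
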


In the remainder of this section, we will provide a proof of the Levin-Kautz theorem that has not appeared in the effective randomness literature, but which was ``in the air", so to speak.  Before we do so, we should note that the two theorems given above immediately imply Demuth's theorem:  Given a $tt$-functional $\Phi$ and $x\in\MLR$, since $\Phi$ is almost total, by the conservation of randomness, it follows that $\Phi(x)\in\MLR_{\lambda_\Phi}$, and $\lambda_\Phi$ is computable by Lemma~\ref{lem:comp-induced-measure}.  Further, if $\Phi(x)$ is not computable, then by the Levin-Kautz theorem there is some $z\in\MLR$ such that $\Phi(x)\equiv_T z$, thus establishing the result.

Let us now turn to the proof of the Levin-Kautz theorem.  In order to prove the result, we need to prove an auxiliary result that we will refer to as the Kautz conversion procedure.  This result provides a converse to Lemmas~\ref{lem:comp-induced-measure} and \ref{lem:an-induced-measure}, as it shows that any computable measure~$\mu$ can be induced by the Lebesgue measure together with an almost total functional.

\begin{theorem}[The Kautz conversion procedure]\label{thm:Kautz-conversion}
Let~$\mu$ be a computable probability measure. Then there exists a non-decreasing, almost total functional~$\Phi$ such that $\lambda_\Phi=\mu$. Moreover,
\begin{itemize}
\item if $\mu$ is atomless, then $\Phi$ is one-to-one, and
\item if $\mu$ is positive, then $\Phi$ is onto, up to a set of measure zero.
\end{itemize}  
Finally, if $\mu$ is both atomless and positive, then $\Phi$ has an almost total inverse $\Phi^{-1}$ such that $\mu_{\Phi^{-1}}=\lambda$.

\end{theorem}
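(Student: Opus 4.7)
The approach is to effectivize the classical \emph{inverse transform sampling} construction: if $X$ is uniform on $[0,1]$ and $F$ is a cumulative distribution function, then $F^{-1}(X)$ has distribution determined by $F$. For each $\sigma \in \str$, define
\[
a_\sigma \;=\; \sum_{\substack{|\tau|=|\sigma| \\ \tau <_{\mathrm{lex}} \sigma}} \mu(\tau), \qquad b_\sigma \;=\; a_\sigma + \mu(\sigma).
\]
These are uniformly computable reals with $b_\sigma - a_\sigma = \mu(\sigma)$, and the pair $[a_{\sigma 0}, b_{\sigma 0}], [a_{\sigma 1}, b_{\sigma 1}]$ partitions $[a_\sigma, b_\sigma]$ (up to the shared endpoint $b_{\sigma 0} = a_{\sigma 1}$). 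Thus the tree of $\mu$-cylinders is represented as a nested sequence of subdivisions of $[0,1] = [a_\varnothing, b_\varnothing]$.

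Define the functional $\Phi$ as follows: on input $x \in \cs$, viewed as its real value in $[0,1]$, output $\sigma$ as soon as it can be verified that the dyadic subinterval corresponding to the cylinder $[x \uh n]$ is contained in $(a_\sigma, b_\sigma)$. This is a $\Sigma^0_1$ event because $a_\sigma, b_\sigma$ are computable reals, so $\Phi$ is a genuine Turing functional, and it is non-decreasing by construction. The set $\cs \setminus \dom(\Phi)$ lies within the countable collection of reals that are eventually equal to some endpoint $a_\sigma$ or $b_\sigma$, so $\lambda(\dom(\Phi)) = 1$; moreover $\Phi^{-1}([\sigma])$ coincides with the open interval $(a_\sigma, b_\sigma) \subseteq \cs$ up to a measure-zero set, so $\lambda_\Phi([\sigma]) = b_\sigma - a_\sigma = \mu(\sigma)$, and hence $\lambda_\Phi = \mu$.

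For the extra properties: if $\mu$ is atomless then $\mu(y \uh n) = b_{y \uh n} - a_{y \uh n} \to 0$ for every $y$, so $\Phi^{-1}(\{y\})$ is a nested intersection of intervals of vanishing width and is therefore at most a singleton, making $\Phi$ one-to-one on its domain. If $\mu$ is positive then every $(a_\sigma, b_\sigma)$ is non-degenerate, so any $y$ for which no width $b_{y \uh n} - a_{y \uh n}$ collapses lies in the range of $\Phi$; this excludes at most countably many $y$, so the range is co-null. When both conditions hold simultaneously, $\Phi$ is an essentially measure-preserving bijection between co-null sets, and $\Phi^{-1}$ is obtained by the symmetric recipe: read longer and longer prefixes of $y$, use positivity to keep $(a_{y \uh n}, b_{y \uh n})$ a genuine interval and atomlessness to drive its width to zero, and enumerate the unique $x$ in the nested intersection---yielding an almost total, non-decreasing $\Phi^{-1}$ with $\mu_{\Phi^{-1}} = \lambda$.

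The main technical care is at the countably many boundary points introduced by the subdivision. One has to check that the exceptional sets on both sides---inputs to $\Phi$ whose cylinder expansions forever straddle an endpoint, and the outputs analogously excluded from the range or from $\dom(\Phi^{-1})$---really are Lebesgue-null (respectively $\mu$-null), and arrange the $\Sigma^0_1$ containment test so that $\Phi$ commits to each output bit on the basis of only finitely much input, which is what makes $\Phi$ a bona fide Turing functional rather than a mere almost-everywhere-defined measurable map.
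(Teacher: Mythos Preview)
Your proposal is correct and is essentially the same construction as the paper's (which presents Kautz's original argument): your intervals $[a_\sigma,b_\sigma]$ are exactly the paper's $I_\sigma$, and outputting $\sigma$ once the input cylinder lands inside $(a_\sigma,b_\sigma)$ is precisely computing what the paper calls the $\mu$-representation $\mathrm{seq}_\mu(x)$. The only cosmetic difference is that you phrase it as inverse transform sampling and work with open intervals (handling endpoints as exceptions), whereas the paper uses closed intervals and the language of $\mu$-partitions; the substance, including the verification of the atomless/positive clauses via shrinking widths and nested nonempty intervals, is identical.
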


\begin{proof}
See Appendix \ref{appendix}.
\end{proof}

The next theorem, first proven by Shen, provides a partial converse to the conservation of randomness theorem, stating that every sequence that is random with respect to a computable measure induced by a functional $\Phi$ has a random real in its preimage under $\Phi$.

\begin{theorem}\label{thm:Shen}
Let $\mu$ be a  computable measure, $\Phi$ a $\mu$-almost total functional, and $y\in\cs$.  If $y\in\MLR_{\mu_\Phi}$, then there is some $x\in\MLR_\mu$ such that $\Phi(x)=y$.
\end{theorem}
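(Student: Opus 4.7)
My plan is to prove the contrapositive: I assume $\Phi^{-1}(\{y\}) \cap \MLR_\mu = \emptyset$ and build a $\mu_\Phi$-Martin-L\"of test that captures~$y$. A first reduction: using the Kautz conversion (Theorem~\ref{thm:Kautz-conversion}), pick an almost total $\Psi$ with $\lambda_\Psi = \mu$ and replace $\Phi$ by $\Phi \circ \Psi$. This composition is $\lambda$-almost total with induced measure~$\mu_\Phi$, and any $\lambda$-random $x'$ with $(\Phi \circ \Psi)(x') = y$ yields the $\mu$-random $x := \Psi(x')$ with $\Phi(x) = y$ via conservation of randomness (Theorem~\ref{thm:conservation}). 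So I may assume $\mu = \lambda$ throughout.

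Fix the universal $\lambda$-Martin-L\"of test $\{\widehat{\mathcal{U}}_n\}$ and set $K_n := \cs \setminus \widehat{\mathcal{U}}_n$. Since $\cs\setminus\dom(\Phi)$ is Martin-L\"of null, I may assume (after absorbing it into the test if need be) that $K_n \subseteq \dom(\Phi)$, so $\Phi$ acts as a total continuous function on the compact $\Pi^0_1$ class~$K_n$. An effective compactness argument then gives a computable modulus of continuity for~$\Phi$ on~$K_n$, from which the image
\[
\Phi(K_n) = \bigl\{\Phi(x) : x \in K_n\bigr\}
\]
can be presented as a $\Pi^0_1$ class, uniformly in~$n$. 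Setting $\mathcal{V}_n := \cs \setminus \Phi(K_n)$, I obtain a uniformly effectively open set. The standing hypothesis $\Phi^{-1}(\{y\}) \subseteq \widehat{\mathcal{U}}_n$ says $K_n \cap \Phi^{-1}(\{y\}) = \emptyset$, i.e.\ $y \notin \Phi(K_n)$, so $y \in \mathcal{V}_n$; meanwhile $x \in K_n$ forces $\Phi(x) \in \Phi(K_n)$, hence $\Phi^{-1}(\mathcal{V}_n) \subseteq \widehat{\mathcal{U}}_n$, whence
\[
\lambda_\Phi(\mathcal{V}_n) = \lambda\bigl(\Phi^{-1}(\mathcal{V}_n)\bigr) \leq \lambda(\widehat{\mathcal{U}}_n) \leq 2^{-n}.
\]
Thus $\{\mathcal{V}_n\}$ is a $\mu_\Phi$-Martin-L\"of test catching~$y$, contradicting $y \in \MLR_{\mu_\Phi}$.

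The step I expect to be the main obstacle is the effective closedness of~$\Phi(K_n)$. The key point is that because $K_n$ is a compact $\Pi^0_1$ class sitting inside~$\dom(\Phi)$, the simultaneous enumeration of the graph of~$\Phi$ and of~$\widehat{\mathcal{U}}_n$ allows one, for each target output length~$m$, to run a search---terminating by K\"onig's lemma applied to the tree of as-yet-unresolved input cylinders---that produces a finite antichain on which $\Phi$ has committed to at least~$m$ output bits. This yields a computable modulus of continuity for~$\Phi$ on~$K_n$, and from it an effective tree for~$\Phi(K_n)$. Without this effectivity, $\mathcal{V}_n$ would only be $\Sigma^0_2$ rather than $\Sigma^0_1$ and would not be a legitimate component of a Martin-L\"of test.
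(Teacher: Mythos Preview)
Your argument is essentially the paper's: both prove the contrapositive by setting $\mathcal{V}_n := \cs \setminus \Phi(\cs \setminus \widehat{\mathcal{U}}_n)$, checking that $\Phi(\cs\setminus\widehat{\mathcal{U}}_n)$ is $\Pi^0_1$ because the closed set lies inside $\dom(\Phi)$, and bounding $\mu_\Phi(\mathcal{V}_n)$ via $\cs\setminus\widehat{\mathcal{U}}_n \subseteq \Phi^{-1}(\Phi(\cs\setminus\widehat{\mathcal{U}}_n))$. The only real difference is that your initial reduction to $\mu=\lambda$ via Kautz conversion is unnecessary---the paper works directly with the universal $\mu$-Martin-L\"of test---while conversely you are more explicit than the paper about why $K_n\subseteq\dom(\Phi)$ and why $\Phi(K_n)$ is effectively closed.
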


\begin{proof}
Suppose $y\in\cs$ is such that for all $x\in\cs$ such that $\Phi(x)=y$, $x\notin\MLR_\mu$.  Then if $\{\mathcal{U}_i\}_{i\in\omega}$ is the universal $\mu$-Martin-L\"of test, then consider
\[\mathcal{V}_i=\{z\in\cs: (\forall x)[x\notin\mathcal{U}_i\Rightarrow\Phi(x)\neq z]\}.\]
We claim that $\{\mathcal{V}_i\}_{i\in\omega}$ is a $\mu_\Phi$-Martin-L\"of test.  First, observe that $z\in\mathcal{V}_i$ if and only if $z\notin\Phi(\cs\setminus\mathcal{U}_i)$. Since~$\Phi$ is an almost total Turing functional, the image under~$\Phi$ of a $\Pi^0_1$ class is also a $\Pi^0_1$ class. In particular, $\Phi(\cs\setminus\mathcal{U}_i)$ is a $\Pi^0_1$ class.

To see that $\mu_\Phi(\mathcal{V}_i)\leq 2^{-i}$, since $\cs\setminus\mathcal{U}_i\subseteq\Phi^{-1}(\Phi(\cs\setminus\mathcal{U}_i))$, we have
\[\mu_\Phi(\mathcal{V}_i)=1-\mu_\Phi(\Phi(\cs\setminus\mathcal{U}_i))=1-\mu(\Phi^{-1}(\Phi(\cs\setminus\mathcal{U}_i)))\leq1-\mu(\cs\setminus\mathcal{U}_i)\leq 1-(1-2^{-i})=2^{-i}.\]
Lastly, since for all $x\in\cs$ such that $\Phi(x)=y$, $x\notin\MLR_\mu$, it follows that $x\notin\mathcal{U}_i$ implies $\Phi(x)\neq y$ for every $i$, and so $y\in\mathcal{V}_i$ for every $i$.  Thus $y\notin\MLR_{\mu_\Phi}$.
\end{proof}

We can now prove the Levin-Kautz Theorem.

\begin{proof}[Proof of Theorem \ref{thm:Levin-Kautz}]
Given a non-computable $y$ and a computable measure $\mu$ such that $y\in\MLR_{\mu}$, by Theorem \ref{thm:Kautz-conversion}, there is some non-decreasing almost total functional $\Phi$ such that $\mu=\lambda_\Phi$. 
Since $y\in\MLR_{\lambda_\Phi}$, by Theorem~\ref{thm:Shen}, there is some $z\in\MLR$ such that $\Phi(z)=y$. Moreover, suppose there were another real~$u$ in the preimage of~$y$ under $\Phi$. Since~$\Phi$ is non-decreasing, this would mean that the whole interval $[z,u]$ (or $[u,z]$) is entirely mapped by~$\Phi$ to the singleton~$\{y\}$, and therefore~$y$ would be an atom of $\lambda_\Phi$, hence would be computable, a contradiction.  Therefore, $y$ has a unique preimage~$z$ under~$\Phi$; in other words~$\Phi^{-1}(\{y\})$ is a $\Pi^0_1(y)$-class containing a single element~$z$, hence~$z$ is $y$-computable. We have proven $z \equiv_T y$ and $z\in\MLR$, as wanted.  
\end{proof}



\section{Demuth's theorem for other notions of randomness}\label{sec:Demuth-for-others}

In this section, we consider Demuth's theorem for other notions of effective randomness, namely Schnorr randomness and computable randomness.  
\begin{theorem}\label{thm:schnorr-conserv}
Let $\mu$ be a computable measure and $\Phi$ an almost total functional.  Then $x\in\SR_\mu$ implies $\Phi(x)\in\SR_{\mu_\Phi}$.
\end{theorem}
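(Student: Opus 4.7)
The plan is to prove the contrapositive by pulling back a $\mu_\Phi$-Schnorr test along $\Phi$ to obtain a $\mu$-Schnorr test, in direct parallel to the proof of the conservation of Martin-L\"of randomness (Theorem~\ref{thm:conservation}). Concretely, I would fix a $\mu_\Phi$-Schnorr test $\{\mathcal{V}_i\}_{i\in\omega}$ with $\Phi(x)\in\bigcap_i\mathcal{V}_i$, and set $\mathcal{U}_i:=\Phi^{-1}(\mathcal{V}_i)$. The crucial point is that the defining identity of the induced measure yields, with no approximation or loss,
\[
\mu(\mathcal{U}_i) \;=\; \mu(\Phi^{-1}(\mathcal{V}_i)) \;=\; \mu_\Phi(\mathcal{V}_i) \;=\; 2^{-i},
\]
so the stricter \emph{equality} requirement in the definition of a Schnorr test is preserved exactly under pullback (rather than only as an inequality, as in the Martin-L\"of case).

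Next, I would check that $\{\mathcal{U}_i\}$ is uniformly effectively open: since each $\mathcal{V}_i$ is a c.e.\ union of basic cylinders $[\tau]$ uniformly in $i$, one uniformly enumerates the functional axioms $(\sigma,\rho)$ of $\Phi$ with $\tau\preceq\rho$, and each resulting cylinder $[\sigma]$ is a basic open subset of $\Phi^{-1}([\tau])\subseteq\mathcal{U}_i$. Combined with the measure computation above, this exhibits $\{\mathcal{U}_i\}$ as a bona fide $\mu$-Schnorr test. Since $\Phi(x)\in\mathcal{V}_i$ for every $i$ implies in particular that $x\in\mathrm{dom}(\Phi)$ and $x\in\Phi^{-1}(\mathcal{V}_i)=\mathcal{U}_i$, we conclude $x\in\bigcap_i\mathcal{U}_i$, contradicting $x\in\SR_\mu$.

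The only (mild) obstacle is verifying the uniform effective openness of the pullback, which is a standard consequence of the effective continuity of Turing functionals on their domain. Beyond this, no further work is required: because the equality $\mu_\Phi(\mathcal{V}_i)=\mu(\Phi^{-1}(\mathcal{V}_i))$ holds on the nose by the definition of the induced measure, the Schnorr case of conservation is, if anything, marginally cleaner than the Martin-L\"of case, and the argument does not need to exploit any of the computable-approximation machinery from Lemma~\ref{lem:comp-induced-measure}.
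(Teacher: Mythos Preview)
Your proposal is correct and is essentially the same argument as the paper's: both pull back the test along $\Phi$ and observe that the proof of Theorem~\ref{thm:conservation} already yields the exact equality $\mu(\Phi^{-1}(\mathcal{V}_i))=\mu_\Phi(\mathcal{V}_i)$, so a $\mu_\Phi$-Schnorr test pulls back to a $\mu$-Schnorr test. The paper simply cites this equality from the appendix proof of Theorem~\ref{thm:conservation} rather than re-deriving the uniform effective openness, but the content is identical.
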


\begin{proof}
In the proof of the conservation of Martin-L\"of randomness (Theorem \ref{thm:conservation}) in Appendix \ref{appendix}, we show that if $\Phi(x)$ is contained in a $\mu_\Phi$-Martin-L\"of test $\{\mathcal{U}_i\}_{i\in\omega}$, then there is a $\mu$-Martin-L\"of test $\{\mathcal{V}_i\}_{i\in\omega}$ containing $x$.  But in fact, we prove more:  we show that $\mu(\mathcal{V}_i)=\mu_\Phi(\mathcal{U}_i)$.  Thus, if $\{\mathcal{U}_i\}_{i\in\omega}$ is a $\mu_\Phi$-Schnorr test containing $\Phi(x)$, it follows that $\{\mathcal{V}\}_{i\in\omega}$ is a $\mu$-Schnorr test containing~$x$.
\end{proof}

Although we have the proved both the conservation of Martin-L\"of randomness and the conservation of Schnorr randomness, perhaps surprisingly, there is no conservation of computable randomness.  That is, we show that there is a $tt$-functional~$\Phi$ and an $a\in\CR$ such that $\Phi(a)\notin\CR_{\lambda_\Phi}$.

\begin{theorem}
There exists a $tt$-reduction~$\Phi$ that does not preserve computable randomness, that is, for some computably random~$a$, $\Phi(a)$ is not computably random for the measure induced by~$\Phi$. Indeed, one can even construct an example where~$\Phi$ induces the Lebesgue measure. 
\end{theorem}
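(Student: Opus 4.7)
The plan is to exploit the gap between Martin-L\"of and computable randomness via a carefully designed measure-preserving $tt$-functional. By Theorems~\ref{thm:schnorr-high} and~\ref{thm:high-cr}, pick a real $a$ that is computably random but not Martin-L\"of random (such an $a$ exists and is necessarily high). Fix a left-c.e.\ $\lambda$-martingale $M$ succeeding on $a$ (for instance a universal one) and a computable pointwise approximation $M_s \nearrow M$ in which each $M_s$ is itself a computable martingale. Because $a \in \CR$, each single $M_s$ fails on $a$, so the ``settling time'' at which $M_s(a \uh n)$ catches up to $M(a \uh n)$ grows in an $a$-computable but not computable fashion; extracting something computable out of this gap is what the construction must achieve.

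I would then build $\Phi$ block-by-block. Fix computable fast-growing sequences $(i_n)$ and a fast-growing computable schedule $g:\omega\to\omega$ with $g(n)>n$, partitioning the input positions of $x$ into consecutive blocks $I_n=[i_n,i_{n+1})$ and the output positions of $\Phi(x)$ into matching blocks $J_n$ of the same lengths. The bits of $\Phi(x)$ on $J_n$ are defined as an $\mathbb{F}_2$-linear bijection applied to the bits of $x$ on $I_n$ together with a small packet of ``borrowed'' bits from the later block $I_{g(n)}$; the borrowed bits are then re-emitted verbatim inside $J_{g(n)}$, so every input bit appears exactly once in the output, through a linear bijection on a finite Boolean cube. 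This keeps $\Phi$ a $tt$-functional with bounded computable use and yields $\lambda_\Phi=\lambda$ exactly.

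A companion computable martingale $d$ then bets on each output block $J_n$ using only the content of earlier output blocks $J_0,\ldots,J_{n-1}$. For a Lebesgue-random input the pre-encoded auxiliary bits are independent of $J_n$, so $d$ neither wins nor loses in expectation; for $\Phi(a)$, however, those same bits reveal enough of $a \uh i_{g(n)+1}$ that $d$ can replay, inside $\Phi(a)$, the bets of a computable sub-approximation of $M$, accumulating gains on precisely those blocks where $M(a \uh \cdot)$ has grown. Since $M(a\uh n)\to\infty$, these gains push $d(\Phi(a)\uh\cdot)$ unboundedly, giving $\Phi(a)\notin\CR_\lambda$ and hence the theorem.

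The main obstacle is the last step: coordinating a single computable schedule $g$ and a single computable martingale $d$ so that the computable sub-approximation used by $d$ actually keeps pace with the $a$-computable growth of $M$ along $a$. Since no $M_s$ individually succeeds on $a$, the approximation index $s$ must be allowed to grow with $n$, yet still computably, and the borrowed auxiliary bits must suffice for $d$ to decide which $s$ and which move to use. Measure-preservation of $\Phi$ further forces each input bit to be used exactly once, tightly constraining how much ``future'' information can be delivered into each output block. Designing a single computable schedule which respects this bit budget and ensures success of $d$ on $\Phi(a)$---using only the highness of $a$ as available leverage above plain computable randomness---is the crux of the argument.
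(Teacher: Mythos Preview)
Your proposal has a genuine gap, and you correctly identify it yourself in the final paragraph without resolving it. The difficulty is not merely technical: to make a \emph{computable} martingale $d$ succeed on $\Phi(a)$, you need $d$ to decide, from finitely many bits of $a$ (possibly from far ahead, via your borrowing scheme), which approximation $M_s$ to play and for how long. But none of the individual $M_s$ succeeds on $a$, and the index $s$ that ``works'' at stage $n$ is governed by the settling time of the left-c.e.\ approximation along $a$, which is $a$-computable but not computably bounded. Raw bits of $a$ from a later block do not, by themselves, tell $d$ which $s$ to use; turning highness of $a$ into that kind of advice would require a $wtt$-type use bound on a dominating function, which highness alone does not give. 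Your measure-preservation constraint (each input bit used exactly once) makes this worse: the amount of ``future'' you can legally deliver into block $J_n$ is bounded by a fixed computable budget, so you cannot let the advice length depend on $a$. As written, the construction is a plan whose decisive step is left open.

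The paper takes a much shorter route that sidesteps all of this: it invokes the known separation between computable randomness and Kolmogorov--Loveland randomness. Choose $a\in\CR$ that is not KL-random; a \emph{total} non-monotonic strategy $S$ defeats $a$. Let $\Phi(x)$ be the sequence of bits of $x$ in the order $S$ reads them. Then $\Phi$ is a $tt$-reduction, it induces Lebesgue measure (each output bit is a fresh, unconditioned bit of the input), and the very strategy $S$ becomes a computable (monotone) martingale succeeding on $\Phi(a)$. Equivalently, one can cite Kastermans--Lempp: computable randomness is not closed under computable injective reorderings of bits. Either way, the theorem follows in one line from existing results, with no need to leverage $\MLR$ versus $\CR$, highness, or any bespoke martingale-scheduling argument.
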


\begin{proof}
This result follows from the work of Muchnik who proved that Kolmogorov-Loveland randomness is stronger than computable randomness. A Kolmogorov-Loveland random sequence is a sequence that defeats all computable non-monotonic strategies, where a non-monotonic strategy is a betting strategy which at each turn chooses which bit of the sequence (that has not been revealed so far) it will bet on, and then bets on the value of the bit. It was proven in~\cite{MuchnikSU1998} that Kolmogorov-Loveland randomness is strictly stronger than computable randomness, and in~\cite{MerkleMNRS2006} that in the definition Kolmogorov-Loveland randomness, one can assume that only \emph{total} non-monotonic strategies are allowed. Consider therefore a sequence~$a\in \cs$ which is computably random but not Kolmogorov-Loveland random. Let~$S$ be a total non-monotonic strategy that defeats~$a$. Now, define~$\Phi$ to be the $tt$-functional which to a sequence~$x$ associates the sequence~$y$ of the bits of~$x$ seen by~$S$ during the game (in order of appearance). Certainly~$\Phi$ is a $tt$-reduction and induces Lebesgue measure. By definition of Kolmogorov-Loveland randomness, $\Phi(a)$ is not computably random, which proves the result. We can also invoke the stronger result by Kastermans and Lempp~\cite{KastermansL2010} who proved that computable randomness is not closed under computable injective re-ordering of bits (i.e. there exists a computably random real~$a$ and a computable injective function~$f$ such that $a(f(0))a(f(1))\ldots$ is not computably random). 
\end{proof}

The proof of the Levin-Kautz Theorem that we provided in the previous section does not work for Schnorr randomness, since the proof relies upon the existence of a universal Martin-L\"of test, and it is well-known that there is no universal Schnorr test. For computable randomness, the situation is even worse as we have seen that there is no randomness conservation for this notion. As a consequence, we cannot prove Demuth's theorem for these two randomness notions by a direct adaptation of the proof for Martin-L\"of randomness. However, there is an interesting way to overcome the difficulty, which works both for computable randomness and Schnorr randomness. It uses the results of Nies, Stephan, and Terwijn mentioned in Section~\ref{sec:background} (see Theorem~\ref{thm:schnorr-high}): a Schnorr random (resp.\ computably random) real is either Martin-L\"of random, or it is high. Armed with this dichotomy, we get Demuth's theorem for Schnorr randomness and computable randomness almost immediately from Demuth's theorem for Martin-L\"of randomness. In fact, we get a slightly stronger statement that subsumes both, in the sense that it suffices to assume~$x \in \SR$ to get $z \in \CR$ in the conclusion. 

\begin{theorem}[Demuth's theorem for computable and Schnorr randomness]
Let~$x\in\SR$ and let $\Phi$ be a truth-table functional.  If $\Phi(x)=y$ is not computable, then there is some $z\in\CR$ such that $y\equiv_T z$.
\end{theorem}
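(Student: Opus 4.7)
The plan is to mimic the Martin-Löf proof of Demuth's theorem but applied to the induced measure $\lambda_\Phi$, and then use the Nies--Stephan--Terwijn dichotomy to handle the case where the resulting $y$ fails to be Martin-Löf random with respect to $\lambda_\Phi$.

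First I would apply conservation of Schnorr randomness (Theorem \ref{thm:schnorr-conserv}) to the tt-functional $\Phi$: since $x \in \SR = \SR_\lambda$ and $\Phi$ is total (hence almost total), we obtain $y = \Phi(x) \in \SR_{\lambda_\Phi}$, where $\lambda_\Phi$ is a computable measure by Lemma \ref{lem:comp-induced-measure}. Now I would invoke the dichotomy of Theorem \ref{thm:schnorr-high}, applied to the computable measure $\lambda_\Phi$: either $y \in \MLR_{\lambda_\Phi}$, or $y$ is high.

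In the first case, $y$ is non-computable and Martin-Löf random with respect to some computable measure, so the Levin--Kautz theorem (Theorem \ref{thm:Levin-Kautz}) yields $z \in \MLR$ with $z \equiv_T y$; since $\MLR \subseteq \CR$, this $z$ already lies in $\CR$, as required. In the second case, $y$ is high and we can directly apply Theorem \ref{thm:high-cr} to the Turing degree of $y$ to obtain some $z \in \CR$ with $z \equiv_T y$.

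There is no real obstacle here: the only subtle point is to remember that the dichotomy from \cite{NiesST2005} must be applied to $y$ with respect to the induced measure $\lambda_\Phi$ rather than with respect to Lebesgue measure, which is precisely what Schnorr conservation enables. The whole argument collapses once one notices that whichever branch of the dichotomy holds, a suitable $z \in \CR$ is produced by a theorem already in hand.
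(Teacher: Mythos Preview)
Your proof is correct and is essentially identical to the paper's: you apply conservation of Schnorr randomness to obtain $y\in\SR_{\lambda_\Phi}$, then split into cases according to whether $y$ is high, invoking Levin--Kautz in the non-high (hence $\lambda_\Phi$-Martin-L\"of random) case and Theorem~\ref{thm:high-cr} in the high case. The paper's proof does exactly the same thing, with the case split phrased contrapositively (``if $y$ is not high then $y\in\MLR_\mu$'' rather than ``either $y\in\MLR_\mu$ or $y$ is high'').
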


\begin{proof}
Whether $x \in\CR$ or $x \in \SR$ (the latter being the weaker assumption), the conservation of randomness for Schnorr randomness (Theorem~\ref{thm:schnorr-conserv}) shows that~$y$ is Schnorr random with respect to some computable measure~$\mu$. We now distinguish two cases.

Case 1. If $y$ is not high, then by Theorem~\ref{thm:schnorr-high} it must be $\mu$-Martin-L\"of random. Thus, we can apply the Levin-Kautz theorem (Theorem~\ref{thm:Levin-Kautz}) to get a real~$z \equiv_T y$ that is Martin-L\"of random (hence computably random). 

Case 2. If $y$ is high, then we can directly apply Theorem~\ref{thm:high-cr} to get the existence of some~$z \in \CR$ such that $z \equiv_T y$.

\end{proof}

\section{The failure of Demuth's theorem for $wtt$-reducibility}\label{sec:demuth-wtt}

In the original proof of Demuth's theorem, Demuth shows that in the conclusion of the theorem, one can require $y \equiv_T z$ \emph{and} $y \leq_{tt} z$.

\begin{proposition}
Let~$x$ be a Martin-L\"of random real. Suppose $x$ $tt$-computes a non-computable real~$y$. Then $y$ is Turing equivalent to some Martin-L\"of random real~$z$, and furthermore $y \leq_{tt} z$. 
\end{proposition}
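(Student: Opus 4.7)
The plan is to re-run the proof of Theorem~\ref{thm:Levin-Kautz}, exploiting the fact that when the original reduction $\Phi$ is truth-table with computable use $\phi$, the induced measure $\mu = \lambda_\Phi$ has a rigid dyadic-grid structure: for every string $\sigma$ of length $n$, $\mu(\sigma) = \lambda(\Phi^{-1}([\sigma]))$ is a multiple of $2^{-\phi(n)}$, since $\Phi^{-1}([\sigma])$ is a finite union of cylinders of depth $\phi(n)$. I would apply Theorem~\ref{thm:Kautz-conversion} to produce a non-decreasing, almost total $\Psi$ with $\lambda_\Psi = \mu$, and set $z := \Psi^{-1}(y)$. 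The proof of Theorem~\ref{thm:Levin-Kautz} already gives $z \in \MLR$ and $z \equiv_T y$, so the only new task is to upgrade the Turing reduction $y \leq_T z$ (realized by $\Psi$) to a $tt$-reduction.

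Since $\Psi$ is non-decreasing, each preimage $\Psi^{-1}([\sigma])$ is a real interval $[\ell_\sigma, r_\sigma]$ of length $\mu(\sigma)$; the left endpoint $\ell_\sigma = \sum_{\sigma' <_{\mathrm{lex}} \sigma,\, |\sigma'| = n} \mu(\sigma')$ is a sum of multiples of $2^{-\phi(n)}$, hence itself such a multiple, and likewise for $r_\sigma$. Consequently the non-degenerate intervals $\Psi^{-1}([\sigma])$ (those with $\mu(\sigma) > 0$) subdivide $[0,1]$ along the $2^{-\phi(n)}$-grid, and every dyadic cylinder $[\tau]$ with $|\tau| = \phi(n)$ is contained in exactly one such interval. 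I would then define a truth-table functional $\Gamma$ by: on input $w$, to output the first $n$ bits, read $w \uh \phi(n)$ and emit the unique $\sigma$ of length $n$ with $\mu(\sigma) > 0$ such that $[w \uh \phi(n)] \subseteq \Psi^{-1}([\sigma])$. The exact computability of $\mu$ (Lemma~\ref{lem:comp-induced-measure}(2)) makes this a $tt$-functional with use $\phi$. To verify $\Gamma(z) = y$, note that $y$ is non-computable hence not dyadic, so $z$ must lie in the open interior of $\Psi^{-1}([y \uh n])$ for each $n$; since both that interval and the $\phi(n)$-cylinder of $z$ are $2^{-\phi(n)}$-grid aligned, the cylinder sits inside the interval and one reads off $\Gamma(z) \uh n = y \uh n$.

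The main bookkeeping obstacle I expect is what to do with strings of $\mu$-measure zero and with inputs $w$ that land exactly on the $2^{-\phi(n)}$-grid. Degenerate preimages $\Psi^{-1}([\sigma])$ are single points that cover no cylinder of depth $\phi(n)$, so they drop out of the partition argument; grid-boundary inputs can be handled by fixing any computable convention for $\Gamma(w)$, since $z$, being strictly interior to its containing interval at each level, never sits on such a boundary for the relevant $n$. Once these details are dispatched, $\Gamma$ is the desired $tt$-functional witnessing $y \leq_{tt} z$.
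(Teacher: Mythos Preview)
Your proposal is correct and is the paper's argument in different dress: the paper reorders the output column of the truth-table for $\Phi$ at each level to obtain a non-decreasing $tt$-functional $\hat\Phi$ with $\lambda_{\hat\Phi}=\lambda_\Phi$, and then (implicitly) re-runs the Levin--Kautz argument with $\hat\Phi$ in place of the Kautz conversion map; your $\Gamma$ is exactly this $\hat\Phi$, since both send the $j$-th cylinder of depth $\phi(n)$ (lex order) to the unique $\sigma$ of length $n$ whose interval $[\ell_\sigma,r_\sigma]$ contains it, so ``grid-aligned intervals'' and ``reordered truth-tables'' describe the same object. One minor fix: the clean reason $z$ lies in the open interior of $\Psi^{-1}([y\uh n])$ is that $z\in\MLR$ is not dyadic while the endpoints $\ell_\sigma,r_\sigma$ are --- the non-dyadicity of $y$ is not the right justification there.
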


\begin{proof}
The reason is that we can take the $tt$-reduction $\Phi$ from $x$ to $y$ and reorder the truth-tables used by $\Phi$ to define a non-decreasing functional $\hat\Phi$ that induces the same measure as $\Phi$.  More precisely, given a function $h$ that bounds the use of $\Phi$, if we take the table consisting of a column of the $2^{h(n)}$ strings of length $h(n)$ listed in lexicographical order alongside a column of the images of these strings under $\Phi$ (which will be strings of length $n$), we can define a new table by permuting the values in the output column to list them in lexicographical order (leaving the first column fixed), thus yielding a non-decreasing map from strings on length $h(n)$ to strings of length $n$.
\end{proof}

 It is therefore natural to ask whether the reverse reduction, $y \geq_T z$, can also be required to be stronger ($tt$ or $wtt$). We will prove that this is not the case in general, not only for Demuth's original theorem, but also for the versions of Demuth theorem for computable randomness and Schnorr randomness proven in the previous section. 

Although not strictly necessary, our analysis will make use of the so-called complex reals, which were defined by Kjos-Hanssen et al.~\cite{Kjos-HanssenMS2011}. To prove the failure of the $wtt$-version of Demuth's theorem, we will show that (1) if a real~$y$ $wtt$-computes a Martin-L\"of random real~$z$, it must be complex and (2) there is an~$x \in \MLR$ and a real~$y \leq_{tt} x$ such that~$y$ is non-computable but not complex.\\

Complex reals were defined by Kjos-Hanssen et al.\ using plain (or prefix-free) Kolmogorov complexity. We shall define them using another version of Kolmogorov complexity, called \emph{monotone complexity}, which for our purposes is slightly easier to handle. The fact that our definition is equivalent to theirs is proven in Proposition \ref{prop:Km-C-complex} the Appendix.

\begin{definition}\label{def:complex}
A monotone machine is a function $M:\str \rightarrow \str\cup\cs$ such that $M(\sigma_1)\preceq M(\sigma_2)$ for all~$\sigma_1\preceq\sigma_2$ and the set of pairs of strings $(\sigma,\tau)$ with $\tau\preceq M(\sigma)$ is c.e.
Fixing a universal monotone machine $M$, we define the $Km$-complexity of $\tau\in\str$ to be
\[Km(\tau)=\min\{|\sigma|:\tau\preceq M(\sigma)\halts\}.\]
A real~$x$ is said to be complex if there is a computable, non-decreasing, unbounded function $g$ such that $Km(x\uh n)\geq g(n)$ for every $n$.  
\end{definition}
In the sequel we call~\emph{order} a non-decreasing, unbounded function from $\omega$ to $\omega$. And for any order~$g$, $g^{-1}$ is the order defined by
\[
g^{-1}(n) = \min\{k : g(k) \geq n\}
\]

The Levin-Schnorr theorem states that a real~$z$ is Martin-L\"of random (with respect to the Lebesgue measure) if and only if $Km(z \uh n) = n -O(1)$; in particular, Martin-L\"of random reals are complex. Furthermore, any real~$y$ that $wtt$-computes a Martin-L\"of random real is itself complex. This follows from the straightforward fact that complex reals are closed upwards in the $wtt$-degrees.


\begin{lemma}\label{lem:wtt-complexity}
Let $a,b$ be two reals such that $a \geq_{wtt} b$. If~$b$ is complex then so is~$a$. 
\end{lemma}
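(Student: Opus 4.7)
The plan is to push a $Km$-description of a prefix of $a$ through the $wtt$-reduction to obtain a $Km$-description of the corresponding prefix of $b$, at the cost of a constant additive loss and a rescaling by the computable use bound. Let $\Psi$ be a Turing functional with $\Psi^a = b$, and let $h$ be a computable, non-decreasing total function bounding the use of $\Psi$, so that whenever $\tau \preceq a$ has $|\tau| \geq h(n)$, we have $\Psi^{\tau}(k)\halts = b(k)$ for all $k < n$. Fix the universal monotone machine $M$ from Definition~\ref{def:complex}.

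The key step is to build a monotone machine $N$ with the property that $b \uh n \preceq N(\sigma)$ whenever $a \uh h(n) \preceq M(\sigma)$. On input $\sigma$, $N$ simulates $M(\sigma)$; as longer and longer prefixes $\tau \preceq M(\sigma)$ are enumerated, $N$ writes out $\Psi^{\tau}$ as far as it is defined. Since $\Psi$ is a Turing functional (hence monotone in its oracle) and $M$ is monotone in $\sigma$, the output of $N(\sigma)$ extends monotonically as $\sigma$ grows, making $N$ a valid monotone machine. When $M(\sigma)$ extends $a \uh h(n)$, monotonicity of $\Psi$ together with the use bound forces $\Psi^{M(\sigma)}$ to extend $b \uh n$. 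By universality of $M$ there is then a constant $c$ such that
\[
Km(b \uh n) \;\leq\; Km(a \uh h(n)) + c \qquad \text{for every } n,
\]
which, combined with the hypothesis $Km(b \uh n) \geq g(n)$ for the computable order $g$ witnessing that $b$ is complex, gives $Km(a \uh h(n)) \geq g(n) - c$.

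To pass from the special lengths $h(n)$ to arbitrary lengths $m$, I use that $Km$ is non-decreasing along prefixes: if $\tau \preceq \tau'$ then any monotone description of $\tau'$ is also one of $\tau$, so $Km(\tau) \leq Km(\tau')$. Let $n(m) = \max\{k : h(k) \leq m\}$ (and $n(m) = 0$ where this set is empty). Since $h$ is computable, non-decreasing and total, $n(m)$ is computable, non-decreasing, and tends to infinity with $m$. For all sufficiently large $m$ we then have $a \uh h(n(m)) \preceq a \uh m$, so
\[
Km(a \uh m) \;\geq\; Km(a \uh h(n(m))) \;\geq\; g(n(m)) - c.
\]
Setting $g'(m) = \max\{0,\, g(n(m)) - c\}$ yields a computable order with $Km(a \uh m) \geq g'(m)$ for every $m$, which is exactly what is needed to conclude that $a$ is complex.

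The main point requiring care is the monotonicity of $N$: the machine must behave coherently on \emph{every} input $\sigma$, not only those for which $M(\sigma)$ is a genuine approximation to $a$. This is handled uniformly by the recipe ``feed whatever $M(\sigma)$ produces into $\Psi$ and output whatever $\Psi$ converges to''; the built-in monotonicity of $M$ and of Turing functionals then ensures that $N$ respects the prefix order on inputs, irrespective of whether a given $\sigma$ happens to encode a ``correct'' approximation of $a$.
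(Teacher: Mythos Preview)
Your argument is correct and is essentially the same as the paper's, only spelled out in greater detail: the paper compresses everything into the single line $g(\phi^{-1}(n)) \leq Km(b \uh \phi^{-1}(n)) \leq Km(a \uh n)$, implicitly using exactly the monotone-machine composition and the $Km$-monotonicity that you make explicit. One small point to tidy: your definition $n(m)=\max\{k : h(k)\leq m\}$ needs $h$ to be unbounded; this is harmless since one may always replace $h$ by $n\mapsto h(n)+n$, but it is worth saying.
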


\begin{proof}
Indeed let~$\phi$ be a computable bound for the use of the~$wtt$-reduction. Suppose~$b$ is complex with order~$g$. Then
\[
g(\phi^{-1}(n)) \leq Km(b \uh \phi^{-1}(n)) \leq Km(a \uh n)
\]
therefore~$a$ is complex via~$g \circ \phi^{-1}$ which is a computable order. 
\end{proof}

The second step of the proof requires more effort. 

\begin{theorem}\label{thm:slowmega1}
There exists $a\in\MLR$ and a non-computable real~$b \leq_{tt} a$ which is not complex. 
\end{theorem}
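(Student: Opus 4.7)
The plan is to leverage the correspondence between $tt$-reductions from Martin-L\"of random reals and $\mu$-Martin-L\"of randomness for computable induced measures. If $b = \Phi(a)$ with $\Phi$ a $tt$-functional and $a \in \MLR$, then by conservation of randomness (Theorem \ref{thm:conservation}), $b$ is $\lambda_\Phi$-Martin-L\"of random, and by the Levin-Schnorr characterization for computable measures, $Km(b \uh n) = -\log \lambda_\Phi(b \uh n) + O(1)$. Thus the task reduces to constructing a computable measure $\mu$ and a $\mu$-Martin-L\"of random non-computable $b$ such that $-\log \mu(b \uh n)$ is below every computable order at infinitely many $n$; the $tt$-reducibility to a Martin-L\"of random real then follows from the Kautz conversion (Theorem \ref{thm:Kautz-conversion}) combined with Shen's theorem (Theorem \ref{thm:Shen}).

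First I would build $\mu$ as an atomless, positive, exactly computable measure equipped with a distinguished ``slow-decay path'' $b$ along which $\mu$ has arbitrarily long \emph{plateaus} -- stretches of positions where $\mu([b \uh n])$ decreases by only a negligible amount due to a heavy bias in the conditional probabilities toward $b$'s continuation. Enumerating the partial computable candidate orders $(\phi_e)_{e \in \omega}$, at stage $e$ the construction arranges that the $e$th plateau is long enough to witness $-\log \mu(b \uh n) < \phi_e(n)$ at some $n$ in that window (should $\phi_e$ turn out to be a total order). The bit of $b$ chosen at each plateau-initial node is set adversarially against a standard enumeration of computable reals, forcing $b$ to disagree with every $\phi_j$ computing a sequence, and thus to be non-computable.

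Next I would verify that $b$ is $\mu$-Martin-L\"of random. Because each plateau-initial branching has positive-but-subunit conditional $\mu$-measure on $b$'s side, $b$ is not an atom, but is nonetheless in a positive-$\mu$-measure neighborhood at every stage. A standard genericity argument, choosing $b$ inside the intersection of a nested sequence of $\Pi^0_1$ classes of positive $\mu$-measure that collectively avoid every effective $\mu$-null $\Sigma^0_1$-class, shows $b \in \MLR_\mu$. Applying Kautz's conversion (Theorem \ref{thm:Kautz-conversion}) gives a non-decreasing, almost-total $\Phi$ with $\lambda_\Phi = \mu$; since $\mu$ is atomless, positive, and exactly computable, $\Phi$ can be arranged to be total (adjusting at the measure-zero set of dyadic boundary points), hence a $tt$-functional. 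Shen's theorem (Theorem \ref{thm:Shen}) then yields $a \in \MLR$ with $\Phi(a) = b$, establishing $b \leq_{tt} a$.

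The main obstacle lies in the first step: simultaneously ensuring that (i) $\mu$ is computable, (ii) $b$ is non-computable, and (iii) $b$ is $\mu$-Martin-L\"of random, while arranging plateaus whose lengths defeat every computable order. The subtlety is that computability of $\mu$ constrains $-\log \mu$ along computable paths to grow at a computable rate, so the slow-decay behavior must occur along a path whose precise trajectory is decoded from the \emph{non-computable} enumeration process of verified computable orders; the construction must therefore weave the diagonalization against orders into the choice of biased branches at the plateaus, while keeping $\mu$'s local conditional probabilities (at each fixed $\sigma$) uniformly computable. Managing this interplay is the technical heart of the proof.
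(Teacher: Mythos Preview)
Your approach has a fatal obstruction. You want an \emph{atomless} computable measure $\mu$ together with a non-complex $b \in \MLR_\mu$, but no such pair exists. For every atomless computable $\mu$ one can build a non-decreasing $tt$-functional $\Theta$ with $\MLR_{\mu_\Theta} = \MLR$ (this is Proposition~\ref{prop:induced-measure-tt} of the paper); by conservation of Martin-L\"of randomness, any $b \in \MLR_\mu$ then satisfies $\Theta(b) \in \MLR$, so $b$ $tt$-computes a Martin-L\"of random real and is therefore complex by Lemma~\ref{lem:wtt-complexity}. This is exactly the mechanism behind Proposition~\ref{cor:wtt-Demuth}. Thus the atomlessness you impose---in order to make the Kautz map injective and to recover a $tt$-functional---is precisely the hypothesis that forces every $\mu$-random real to be complex, so your measure-first strategy cannot simultaneously achieve atomlessness of $\mu$ and non-complexity of $b$. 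A secondary issue: even granting your setup, making the Kautz functional total by a convention at dyadic endpoints does not give it computably bounded use, so it is still not a $tt$-functional.

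The paper's proof goes the other way around and never needs atomlessness, Kautz conversion, or Shen's theorem. It fixes $a = \Omega$ and writes down an explicit $tt$-functional $\Phi$ (the ``slowdown functional'') whose output $\Omega^* = \Phi(\Omega)$ encodes in unary the settling times $s_i$ of the canonical left-c.e.\ approximation to $\Omega$. Since the function $i \mapsto s_i$ dominates every computable function, $Km(\Omega^* \uh n)$ is bounded by a function dominated by every computable order, so $\Omega^*$ is not complex; non-computability of $\Omega^*$ is immediate because it encodes this fast-growing function. The induced measure $\lambda_\Phi$ is heavily atomic---indeed $\Omega^*$ is its only non-atom random point---which is consistent with the obstruction above.
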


\begin{proof}
To prove this theorem, we take~$a$ to be Chaitin's $\Omega$ number, where $\Omega:=\sum_{U(\sigma)\halts}2^{-|\sigma|}$, $U$ being a universal prefix-free machine.  It is well-known that $\Omega\in\MLR$ and is left-c.e.\ real, which means that there is a computable sequence of rationals $(\Omega_s)_s$ that converges to $\Omega$ from below. Note that this sequence must converge very slowly, i.e.\ there is no computable function $f$ such that $\Omega\uh n=\Omega_{f(n)}\uh n$ infinitely often, for otherwise we would be able to compress the corresponding initial segments of~$\Omega$. We use the slowness of this approximation to build our sparse real~$b$. We achieve this through the following $tt$-reduction. Let $\Phi: \cs \rightarrow \cs$ be the functional (which we will call the ``slowdown functional") defined for all reals by
\[
\Phi(x)=1^{t_1}01^{t_2}01^{t_3}1\ldots
\]
where the $t_i$ are defined as follows: $t_0=0$ and 
\[
t_i=\min \{s : \Omega_s \geq 0.x \uh i\}
\]
with the convention that if the set on the right-hand side is empty, then $t_i=+\infty$. Thus if some $t_i$ is infinite, then $\Phi(x)=1^{t_1}0\ldots1^{t_k}011111\ldots$ where $t_{k+1}$ is the first $t_i$ to be infinite.

$\Phi$ is clearly a $tt$-reduction.  Moreover, if $a < \Omega$, then there is some $s$ such that $\Omega_s>a\uh i$ for every $i\in\omega$, and hence 
\[\Phi(a)=\sigma(1^k0)^\omega\]
for some $\sigma\in\str$ and $k\in\omega$.  If $a > \Omega$, then there is some $i$ such that $\Omega_s<a\uh i$ for every $s\in\omega$, and hence
\[\Phi(a)=\sigma1^\omega\]
for some $\sigma\in\str$. The interesting case is when $a=\Omega$, for in this case, setting 
\[\Phi(\Omega)=1^{s_1}01^{s_2}01^{s_3}0\ldots,\]
 we know that the function $f$ given by $f(i)=s_i$ grows faster than any computable function, since $\Omega\uh n=\Omega_{f(n)}\uh n$ for every $n\in\omega$. If we set $\Phi(\Omega)=\Omega^*$, then we have
\[Km(\Omega^*\uh f(n))\leq^+ n\]
and hence
\[Km(\Omega^*\uh n)\leq^+ f^{-1}(n),\]
But since $f$ grows faster than any computable order function, then $f^{-1}$ is dominated by all computable order functions.  Thus, there is no computable order function $g$ such that 
\[Km(\Omega^*\uh n)\geq g(n).\]
\end{proof}

We can now prove that the $wtt$-version of Demuth's theorem fails for Martin-L\"of randomness. 

\begin{corollary}\label{cor:ml-demuth-wtt-fails}
There exists $a\in\MLR$ and a non-computable real~$b \leq_{tt} a$ such that there is no $y\in\MLR$ with $y\leq_{wtt}b$.
\end{corollary}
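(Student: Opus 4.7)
The plan is to deduce the corollary immediately by combining Theorem~\ref{thm:slowmega1} with Lemma~\ref{lem:wtt-complexity}. Take~$a = \Omega$ and~$b = \Phi(\Omega) = \Omega^*$ as produced by Theorem~\ref{thm:slowmega1}; this gives $a \in \MLR$ and a non-computable~$b \leq_{tt} a$ which fails to be complex. These are the only ingredients we will need.

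Now suppose for contradiction that some $y \in \MLR$ satisfies $y \leq_{wtt} b$. By the Levin--Schnorr theorem recalled in the paragraph preceding Lemma~\ref{lem:wtt-complexity}, every Martin-L\"of random real has $Km(y \uh n) = n - O(1)$, and is therefore complex (witnessed, say, by the order $g(n) = \max(n - c, 0)$ for an appropriate constant~$c$). Applying Lemma~\ref{lem:wtt-complexity} to the pair $b \geq_{wtt} y$ with~$y$ complex, we conclude that~$b$ is itself complex, contradicting the conclusion of Theorem~\ref{thm:slowmega1}.

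There is essentially no obstacle here: the real work has been done in Theorem~\ref{thm:slowmega1}, which produced a non-computable $tt$-image of a Martin-L\"of random real whose initial-segment monotone complexity grows slower than every computable order, and in Lemma~\ref{lem:wtt-complexity}, which says complexity is preserved downward under $wtt$-reductions. The corollary is just the contrapositive packaging of these two facts.
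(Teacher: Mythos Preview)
Your proof is correct and follows exactly the same route as the paper: take $a=\Omega$ and $b=\Omega^*$ from Theorem~\ref{thm:slowmega1}, observe that Martin-L\"of random reals are complex via Levin--Schnorr, and apply Lemma~\ref{lem:wtt-complexity} (in contrapositive form) to conclude that the non-complex $b$ cannot $wtt$-compute any $y\in\MLR$. The paper's proof is phrased slightly more directly (``$\Omega^*$ cannot $wtt$-compute any complex real, hence no $x\in\MLR$'') rather than as a proof by contradiction, but the content is identical.
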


\begin{proof}
By Theorem \ref{thm:slowmega1}, $\Omega^*$ is $tt$-reducible to a Martin-L\"of random real but is not complex.
Hence by Lemma \ref{lem:wtt-complexity}, $\Omega^*$ cannot $wtt$-compute any complex real, and thus $\Omega^*$ cannot $wtt$-compute any $x\in\MLR$.

\end{proof}

There are only countably many reals that are random with respect to the measure induced by the slowdown functional $\Phi$ in the proof of Theorem \ref{thm:slowmega1}  (and all of them are atoms except for $\Omega^*$), but this does not have to be the case, as is shown by the following result.

\begin{proposition}\label{prop:general-slowmega}
There is a computable measure $\mu$ such that $|\MLR_\mu|=2^{\aleph_0}$ and no $x\in\MLR_\mu$ $wtt$-computes any $y\in\MLR$.
\end{proposition}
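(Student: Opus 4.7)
The strategy is to generalize Theorem~\ref{thm:slowmega1} by exhibiting a computable measure $\mu$ whose Martin-L\"of random elements form a continuum of non-complex reals. Since every Martin-L\"of random real is complex by the Levin--Schnorr theorem, Lemma~\ref{lem:wtt-complexity} will then immediately yield that no $\mu$-MLR real can $wtt$-compute any $y \in \MLR$, as required.

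My plan is to construct a truth-table functional $\Psi : \cs \to \cs$ whose image contains uncountably many non-complex reals, and then take $\mu := \lambda_\Psi$, which is computable by Lemma~\ref{lem:comp-induced-measure}. Concretely, view the input $z$ as a bit-interleaving of two reals $(c,w) \in \cs \times \cs$ and define
\[
\Psi(c,w) = 1^{t_1(w)}\,c(0)\,1^{t_2(w)}\,c(1)\,1^{t_3(w)}\,c(2)\,\cdots,
\]
where $t_i(w) = \min\{s : \Omega_s \geq 0.w\uh i\}$ is the slowdown-time function from the proof of Theorem~\ref{thm:slowmega1} (with the convention that if $t_i(w) = \infty$ the output terminates with $1^\omega$). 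From Theorem~\ref{thm:conservation} and Theorem~\ref{thm:Shen} one obtains $\MLR_\mu = \Psi(\MLR)$, and since the dependence of $\Psi$ on $c$ is injective on a positive-measure portion of inputs, $|\MLR_\mu| = 2^{\aleph_0}$.

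To establish non-complexity of the elements of $\MLR_\mu$, I plan to use the structure of $\Psi$: each output $\Psi(c,w)$ contains only $k(n)$ bits of $c$ among its first $n$ symbols, where $k(n)$ is the ``inverse'' of the slowdown-time sequence $(t_i(w))_i$. Since $\mu$ itself provides a monotone coding giving $Km(\Psi(c,w)\uh n) \leq -\log \mu(\Psi(c,w)\uh n) + O(1) = k(n) + O(1)$, if $k(n)$ grows slower than every computable order function then for each computable order $g$ there is some $n$ with $Km(\Psi(c,w)\uh n) < g(n)$, showing that $\Psi(c,w)$ is non-complex.

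The main technical obstacle is that in the naive definition above, the slowdown times $t_i(w)$ dominate all computable functions only when $w = \Omega$, which is a measure-zero event and so contributes nothing to the $\mu$-MLR reals obtained via $\Psi(\MLR)$. The crucial refinement is to engineer $\Psi$ so that the slowdown times are fast-growing for a full-measure set of inputs $w$, not merely for $w = \Omega$; this can be accomplished by interposing additional layers of slowdown, or by using the bits of $w$ to select among computable thresholds in such a way that the effective threshold approaches $\Omega$ for Lebesgue-almost every $w$. Once this refinement is in place, the argument outlined above yields the desired measure $\mu$, and Lemma~\ref{lem:wtt-complexity} completes the proof.
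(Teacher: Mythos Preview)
Your construction is essentially the paper's: the paper also defines a total functional on pairs, $\Psi(a\oplus b)=b_0^{t_1}b_1^{t_2}b_2^{t_3}\cdots$ with the $t_i$ the slowdown times determined by $a$, and takes $\mu=\lambda_\Psi$. You also correctly isolate the central obstacle --- the slowdown times dominate every computable function only for the single parameter value $\Omega$, a null event.

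The gap lies in your resolution of this obstacle. The ``crucial refinement'' --- engineering fast slowdown for Lebesgue-almost every $w$ by ``interposing additional layers of slowdown'' or ``using the bits of $w$ to select among computable thresholds'' --- is left entirely unspecified, and you give no indication that it can be realized by a $tt$-functional. In fact, for your $\Psi$ as written, whenever $w<\Omega$ the times $t_i(w)$ eventually stabilize at a constant, so $\Psi(c,w)$ encodes a tail of $c$ in a computably recoverable way and is therefore complex whenever $c\in\MLR$; no adjustment that preserves this basic format will make every element of $\Psi(\MLR)$ non-complex.

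The paper avoids any such refinement with a one-line observation you are missing: van Lambalgen's theorem. One does not need the slowdown to be fast for almost every choice of parameter; instead, note that $\Omega\oplus b\in\MLR$ whenever $b$ is $2$-random (since then $b\in\MLR^\Omega$ and $\Omega\in\MLR$). This already yields a continuum of Martin-L\"of random inputs whose slowdown coordinate is exactly $\Omega$. Their images $\Psi(\Omega\oplus b)$ lie in $\MLR_\mu$ by conservation (Theorem~\ref{thm:conservation}) and are non-complex by the argument of Theorem~\ref{thm:slowmega1}, hence none of them $wtt$-computes an element of $\MLR$. This single appeal to van Lambalgen replaces your unexecuted refinement entirely.
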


\begin{proof}
We define a new functional $\Psi$ that on input $a\oplus b$ behaves similarly to the slowdown functional $\Phi$ defined in the proof of Theorem \ref{thm:slowmega1}.
Suppose that $\Phi(a)=1^{t_1}01^{t_2}01^{t_3}0\dotsc1^{t_i}0\dotsc$.  Then we have 
\[\Psi(a\oplus b)=b_0^{t_1}b_1^{t_2}b_2^{t_3}\dotsc b_i^{t_i}\dotsc\]
where $b_i=b(i)$ for every $i$.  Note that $\Psi$ is total, since $\Phi$ is total.  Further, if $B$ is 2-random (i.e. $b\in\MLR^{\emptyset'}$), then $b\in\MLR^\Omega$ and hence $\Omega\oplus b\in\MLR$ by van Lambalgen's Theorem (according to which $A\oplus B\in\MLR\Leftrightarrow A\in\MLR^B$ and $B\in\MLR$ for any $A,B\in\cs$; see \cite{DowneyH2010}, Chapter 6.9).  It follows from the conservation of Martin-L\"of randomness that $\Psi(\Omega\oplus b)$ is random with respect to the induced measure $\lambda_\Psi$.
Moreover, as with $\Omega^*$, $\Psi(\Omega\oplus b)$ is not complex, and thus cannot $wtt$-compute any $y\in\MLR$.
\end{proof}

As we have seen, the $wtt$-generalization of Demuth's Theorem for Martin-L\"of randomness fails quite dramatically. We want to prove that the same is true for computable randomness and Schnorr randomness. It seems that the real $\Omega^*$ constructed in the proof of Theorem~\ref{thm:slowmega1} is so far from complex that it should not even $wtt$-compute a Schnorr random real. Unfortunately, we do not know whether this is the case.
We therefore need to slightly adapt the technique used in the proof of Theorem~\ref{thm:slowmega1}, still keeping the main ideas. 

To prove that the $wtt$-version of Demuth theorem fails for both computable randomness and Schnorr randomness, we will prove the following (stronger) result. 

\begin{theorem}\label{thm:sr-demuth-wtt-fails}
For almost all reals~$a$, there exists a non-computable real~$b \leq_{tt} a$ which does not wtt-compute any Schnorr random real.
\end{theorem}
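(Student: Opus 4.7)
The plan is to reduce the theorem, via a dichotomy, to constructing (for almost every $a$) a non-computable $b \leq_{tt} a$ that is neither complex nor high, and then to produce such a $b$ by adapting the slowdown construction of Proposition~\ref{prop:general-slowmega}. The dichotomy is the following: if $b$ $wtt$-computes some Schnorr random $z$, then either $z \in \MLR$, in which case Lemma~\ref{lem:wtt-complexity} forces $b$ to be complex (since $z$ is complex by Levin-Schnorr), or $z \in \SR \setminus \MLR$, in which case Theorem~\ref{thm:schnorr-high} forces $z$, and hence $b \geq_T z$, to be high. So if $b$ is neither complex nor high, $b$ does not $wtt$-compute any Schnorr random.

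The ``not high'' condition is automatic once $a$ is chosen appropriately: by Sacks's theorem the set $\{a : a \geq_T \emptyset'\}$ has Lebesgue measure zero, and for Martin-L\"of random reals ``high'' coincides with ``$\geq_T \emptyset'$'' (by a theorem of Nies-Stephan-Terwijn, combined with the characterization of PA-complete ML-randoms due to Stephan and Kucera-Gacs). Hence $\lambda$-almost every $a$ is ML-random and not high, so any $b \leq_T a$ is also not high.

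For the non-computable, non-complex part, one adapts the $tt$-functional $\Psi$ of Proposition~\ref{prop:general-slowmega}. That construction produces a non-complex output only on inputs whose ``trigger'' half matches $\Omega$---a measure-zero event. The plan is to design a modified $tt$-functional $\Psi'$ that internalizes the $\Omega$-driven slowdown, so that block sizes grow faster than any computable function for $\lambda$-almost every input $a$, with $a$ controlling only the data bits placed into those blocks so that $\Psi'$ remains injective. Lemma~\ref{lem:an-induced-measure} then yields that the induced measure $\lambda_{\Psi'}$ is atomless, and the conservation of Martin-L\"of randomness implies that $b := \Psi'(a) \in \MLR_{\lambda_{\Psi'}}$ for $\lambda$-a.e.\ $a$, so $b$ is non-computable. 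The argument that $b$ is non-complex parallels the one in the proof of Theorem~\ref{thm:slowmega1}: the initial segment $b \uh T_i$ is determined by $O(i)$ bits (the first $i$ data bits of $a$, plus $O(i)$ bits specifying the block structure via $\Omega$), yet the position $T_i$ grows faster than any computable function, giving $Km(b \uh T_i) < g(T_i)$ eventually for every computable order $g$.

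The main technical obstacle is this last step---arranging the modified slowdown so that its block sizes grow super-computably for $\lambda$-almost every $a$ rather than only for inputs tuned to $\Omega$. A naive $tt$-reduction cannot produce $a$-dependent block sizes exceeding a computable function of the oracle use, but the internal computation time used to decide each block is unrestricted, and one can exploit this via a search condition involving $\Omega_s$ and the bits of $a$ so that non-computable block growth is inherited from $\Omega$'s slow approximation almost surely.
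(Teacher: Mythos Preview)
Your high-level decomposition matches the paper's: the same dichotomy (a $wtt$-image that is Schnorr random is either Martin-L\"of random, forcing $b$ to be complex, or lies in $\SR\setminus\MLR$, forcing highness below~$a$), and the same observation that almost every~$a$ is not high, so the task reduces to producing a non-computable, non-complex $b \leq_{tt} a$ for almost every~$a$.

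The gap is precisely where you say it is, and you have not closed it. Your plan is to ``internalize the $\Omega$-driven slowdown'' of Proposition~\ref{prop:general-slowmega} into a single $tt$-functional $\Psi'$ whose block sizes grow super-computably for almost every input. But the block structure of $\Psi'(a)$ is determined by a fixed computable procedure applied to~$a$; if the block boundaries do not depend on~$a$, they are computable and cannot outgrow all computable functions, while if they do depend on~$a$, you need some feature of \emph{almost every}~$a$ that forces this growth. Invoking a ``search condition involving $\Omega_s$ and the bits of~$a$'' does not supply such a feature: the slowdown functional of Theorem~\ref{thm:slowmega1} produces fast-growing blocks only on the single input~$\Omega$ (and Proposition~\ref{prop:general-slowmega} only on inputs whose first coordinate is~$\Omega$), and you give no mechanism by which this would persist on a measure-one set of~$a$'s.

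The paper resolves this by abandoning the $\Omega$-slowdown entirely and instead exploiting that almost every~$a$ (indeed every 2-random) has \emph{hyperimmune} degree. From such an~$a$ one has a Turing reduction $\Psi$ with $\Psi^a = f$ not dominated by any computable function; the halting times $g(n)=\min\{t:\Psi^a[t](n)\!\downarrow\}\geq f(n)$ are then used as block lengths in the $tt$-functional $\Theta(x)=1^{t_0}01^{t_1}0\cdots$ with $t_n=\min\{t:\Psi^x[t](n)\!\downarrow\}$. This is genuinely a $tt$-reduction (the use at output position~$m$ is at most~$m$, since computations of length $\leq m$ query at most $m$ bits), the output $b=\Theta(a)$ is non-computable because it encodes the non-computable function~$g$, and a short combinatorial lemma (Lemma~\ref{lem:easy-orders}) converts ``$g$ not dominated'' into ``$b$ not complex''. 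That hyperimmunity-based construction (Proposition~\ref{prop:hyperimm-complexity}) is the missing idea in your proposal.
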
 

This shows in particular that there exists a Martin-L\"of random (hence computably random Schnorr random) real $a$, and a non-computable $b \leq_{tt} a$ which does not $wtt$-compute any Schnorr random real. Therefore the $wtt$-version of Demuth's theorem fails for all three notions of randomness.

To prove Theorem~\ref{thm:sr-demuth-wtt-fails}, we need a few auxiliary facts.  

\begin{lemma}\label{lem:easy-orders}
Let~$f$ be an increasing function that is not dominated by any computable function. Let~$g$ be a computable order function. Then for infinitely many~$n$, 
\[
f(n) < g(f(n+1)).
\] 
\end{lemma}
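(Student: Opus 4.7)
The plan is to prove the contrapositive: if only finitely many $n$ satisfy $f(n) < g(f(n+1))$, then $f$ is dominated by a computable function, contradicting the hypothesis on $f$. So assume there is an $N \in \omega$ such that for every $n \geq N$,
\[
g(f(n+1)) \leq f(n).
\]

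The key gadget is a computable pseudo-inverse of $g$. Define $G: \omega \to \omega$ by
\[
G(m) = \max\{k \in \omega : g(k) \leq m\},
\]
with $G(m) = 0$ if the set is empty. Since $g$ is a computable order, it is non-decreasing and unbounded, so for each $m$ we can computably search for the least $k$ with $g(k) > m$ and return $k-1$ (or $0$). Hence $G$ is a well-defined, computable, non-decreasing function. By the definition of $G$, the inequality $g(f(n+1)) \leq f(n)$ immediately yields $f(n+1) \leq G(f(n))$ for every $n \geq N$.

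Now define $h: \omega \to \omega$ by hard-coding the finite initial segment and then iterating $G$: set $h(n) = f(n)$ for $n \leq N$, and $h(n+1) = G(h(n))$ for $n \geq N$. Since $f(0), \dots, f(N)$ are finitely many fixed natural numbers, hard-coding them does not affect computability, and $G$ is computable, so $h$ is a total computable function. A straightforward induction on $n \geq N$, using $f(n+1) \leq G(f(n))$ together with the monotonicity of $G$, shows that $f(n) \leq h(n)$ for every $n$. Thus $h$ is a computable function dominating $f$, contradicting the standing hypothesis that $f$ is dominated by no computable function.

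There is no real obstacle here; the one point that deserves care is verifying that $G$ is computable and non-decreasing, which relies crucially on $g$ being both computable and an order (so that the search terminates and the max in the definition of $G$ is finite). Once this is in place, the recursion $h(n+1) = G(h(n))$ mechanically converts the hypothesized cofinite inequality into a computable upper bound on $f$.
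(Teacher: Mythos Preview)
Your proof is correct and follows essentially the same approach as the paper: assume the inequality fails cofinitely, pass to a computable pseudo-inverse of $g$, and iterate it from the hard-coded value $f(N)$ to obtain a computable function dominating $f$. Your treatment is in fact slightly more careful than the paper's, since your choice $G(m)=\max\{k:g(k)\le m\}$ makes the implication $g(f(n+1))\le f(n)\Rightarrow f(n+1)\le G(f(n))$ immediate, and you explicitly note that hard-coding the finitely many values $f(0),\dots,f(N)$ is harmless for computability.
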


\begin{proof}
Indeed, if the opposite holds, i.e., $f(n+1) \leq m(f(n))$ for all~$n \geq k$, where~$m$ is the inverse function of~$g$, then it is easy to show by induction that
\[
f(n) \leq m^{(n-k)}(f(k))
\] 
for all~$n \geq k$. The right-hand side of the above expression being a nondecreasing and computable function of $n$, we have a contradiction. 
\end{proof}

\begin{proposition}\label{prop:hyperimm-complexity}
Let~$a$ be a Martin-L\"of random real of hyperimmune degree. Then there is a real $b \leq_{tt} a$ such that~$b$ is not complex. 
\end{proposition}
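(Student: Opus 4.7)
My plan is to adapt the ``slowdown functional'' construction from the proof of Theorem~\ref{thm:slowmega1}, using $a$'s hyperimmunity in place of the slow left-c.e.\ approximation of $\Omega$.  Since $a$ has hyperimmune degree, I would fix a Turing functional $\Gamma$ and a strictly increasing $a$-computable function $f=\Gamma^a$ not dominated by any computable function.  Write $\phi$ for the use of $\Gamma^a$ on~$a$; we may take $\phi$ to be strictly increasing, and then $\phi$ itself is not dominated by any computable function (otherwise one could compute a computable upper bound on~$f$ by brute-forcing over all oracles of length $\phi(n)$).  I would then define the total functional
\[
\Phi(x,l) \;=\; x\bigl(t_x(l)\bigr), \qquad t_x(l) \;=\; \max\bigl\{\, i : \Gamma^{x\uh (l+1)}(k)[l+1]\halts \text{ for all } k\leq i \,\bigr\},
\]
(with $t_x(l)=0$ if the set is empty).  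Since the use $l\mapsto l+1$ is computable, $\Phi$ is a $tt$-functional.  Setting $b:=\Phi(a)$, a direct check shows $t_a(l)=N$ exactly for $l\in[\phi(N),\phi(N+1))$, giving $b$ the block structure $b = a(0)^{\phi(1)-\phi(0)}\,a(1)^{\phi(2)-\phi(1)}\,a(2)^{\phi(3)-\phi(2)}\cdots$, so $b$ is a constant bit $a(N)$ on each of the ``slow'' intervals $[\phi(N),\phi(N+1))$.

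The heart of the argument is a short monotone description of $b\uh L$ for $L$ just below a block boundary.  I would build a monotone machine $M$ that, on input $\sigma$ of length $\phi(N)$ (thought of as the candidate prefix $a\uh \phi(N)$), first simulates $\Phi$ on $\sigma$ to produce $b\uh \phi(N)$ exactly, then keeps printing the constant bit $\sigma(N)=a(N)$ indefinitely.  Because $t_a$ is constant equal to~$N$ throughout the $N$-th block, this extrapolation is in fact correct on the whole block, so $b\uh L$ is a prefix of $M(a\uh \phi(N))$ for every $L<\phi(N+1)$, yielding $Km(b\uh L)\leq\phi(N)+O(1)$.  Applying Lemma~\ref{lem:easy-orders} to the undominated function~$\phi$: for every computable order $g$ and infinitely many~$N$, $\phi(N)<g(\phi(N+1))$.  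Taking $L=\phi(N+1)-1$ gives $Km(b\uh L)<g(L)+O(1)$ infinitely often, and absorbing the additive constant into a slightly slower computable order shows $b$ is not complex.

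The step I expect to be most delicate is securing that $b$ is also non-computable, which is implicit in the downstream use of the proposition in Theorem~\ref{thm:sr-demuth-wtt-fails}.  The subtlety is that two consecutive blocks with $a(i)=a(i+1)$ merge visually in $b$, so a naive decoding cannot recover $\phi$ (let alone $a$) from $b$.  I would handle this by slightly thickening the construction: replace the single output bit $x(t_x(l))$ by the pair $(x(t_x(l)),\,t_x(l)\bmod 2)$ laid across two consecutive output positions, which forces every block boundary in the modified $b'$ to be visible.  From $b'$ one can then Turing-recover $\phi$ and hence $a$, giving $b'\equiv_T a$ and in particular non-computable, while the monotone-complexity bound is only degraded by a constant factor and the non-complexity argument carries over unchanged.
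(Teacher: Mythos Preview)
Your overall strategy---define a $tt$-functional whose output has long constant blocks governed by the convergence behavior of an $a$-computation, then invoke Lemma~\ref{lem:easy-orders}---is exactly the paper's. The paper's encoding is simpler: it takes $g(n)$ to be the halting \emph{time} of $\Psi^a(n)$ (so that $g\geq f$ and $g$ is not computably dominated) and sets $b=1^{g(0)}01^{g(1)}01^{g(2)}0\ldots$, with the block boundaries hard-wired by the separating $0$'s. This makes $b$ visibly Turing-equivalent to $g$, hence non-computable, with no patch required; your parity-bit modification is correct but is doing by hand what the $1^{\cdot}0$ encoding gives for free. The paper then bounds $Km$ via $Km(\sigma\tau)\leq K(\sigma)+Km(\tau)+O(1)$ and $Km(1^k)=O(1)$, rather than building an explicit machine.

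One technical point in your write-up does need repair. The machine $M$ you describe is not monotone: on input $a\uh\phi(N)$ it outputs the \emph{infinite} sequence $b\uh\phi(N)\cdot a(N)^\omega$, so on the extension $a\uh\phi(N+1)$ it would have to output that same infinite sequence, which is wrong as soon as $a(N)\neq a(N+1)$. The fix is routine---feed $M$ a self-delimiting encoding of $a\uh\phi(N)$ so that distinct inputs are incomparable and monotonicity is vacuous, or simply argue via $Km(b\uh L)\leq K(b\uh\phi(N))+O(1)\leq\phi(N)+O(\log\phi(N))$; the logarithmic overhead is harmless for the Lemma~\ref{lem:easy-orders} step. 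A smaller point: for your domination claim on $\phi$ to go through, $\phi$ must bound the running time of $\Gamma^a$ and not merely the oracle use (otherwise the brute-force over oracles of length $\phi(n)$ has no halting guarantee); the standard convention that stage bounds use takes care of this, but it should be stated.
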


\begin{proof}
Since~$a$ is of hyperimmune degree, it computes a function~$f$ which is infinitely often above any given computable function~$g$. Let~$\Psi$ be the Turing reduction from~$a$ to~$f$. For all~$n$, define
\[
g(n) = \min\{t: \Psi^a[t](n) \downarrow\}
\]
By the standard conventions on oracle computations, it follows that $g(n) \geq f(n)$ for all~$n$ (as we require that the number of steps for a halting computation always exceeds the output of the computation).  It follows that~$g$ is not dominated by any computable function. Now let~$\Theta$ be the reduction defined by
\[
\Theta(x)=1^{t_0}01^{t_1}01^{t_2}\ldots
\]
with
\[
t_n = \min\{t :\Psi^x[t](n) \downarrow\}
\]
(with the convention that $\Theta(x)=1^{t_0}01^{t_1}01\ldots 1^{t_{i-1}}01111111\ldots$ if $t_i$ is infinite and is the smallest such $t_n$). The definition ensures that~$\Theta$ is total and that
\[
b=\Theta(a)=1^{g(0)}01^{g(1)}01^{g(2)}\ldots
\] 
We need to show that~$b$ is not complex. Let~$h$ be a computable order. Notice that 
\[
\begin{split}
Km(1^{g(0)}01^{g(1)}\ldots 01^{g(n)}01^{g(n+1)})&\leq K(1^{g(0)}01^{g(1)}\ldots 01^{g(n)}0)+O(1) \\
& \leq n \cdot \log g(n)+O(1) \\
& \leq g(n) \log g(n)+O(1),
\end{split}
\]
where the first inequality follows from two facts: (i) $Km(\sigma \tau) \leq K(\sigma)+Km(\tau)+O(1)$ and (ii) $Km(1^k)=O(1)$ for all~$k$. 
By Lemma~\ref{lem:easy-orders} applied to the composition of~$h$ and $(n \mapsto n \log n)^{-1}$, we have for infinitely many~$n$, $g(n) \log g(n)+k < h(g(n+1))$ for any fixed $k\in\omega$ (as $g(n) \log g(n)+k$ is not dominated by any computable function). Thus for infinitely many~$n$,
\[
Km(b \uh g(n+1)) \leq Km(1^{g(0)}01^{g(1)}\ldots 01^{g(n)}01^{g(n+1)}) < h(g(n+1)).
\]
Since this is the case for any order~$h$, it follows that~$b$ is not complex. 
\end{proof}

We are now ready to prove Theorem~\ref{thm:sr-demuth-wtt-fails}. 

\begin{proof}[Proof of Theorem \ref{thm:sr-demuth-wtt-fails}]
Let~$a$ be a random real of hyperimmune but non-high degree. Note that almost all reals have this property. More precisely, any 3-random\footnote{For $n\geq 1$, a real $x$ is $n$-random if and only if  $x\in\MLR^{\emptyset^{(n-1)}}$, that is, $x$ is Martin-L\"of random relative to $\emptyset^{(n-1)}$.} is a real real has this property: any 2-random real has hyperimmune degree, as proven by Kurtz~\cite{Kurtz1981} and no 3-random real is high~\cite[Exercise 8.5.21]{Nies2009}. Then by Proposition~\ref{prop:hyperimm-complexity}, $a$ $tt$-computes a real~$b$ which is not complex. Now suppose $b$ $wtt$-computes a real~$c$. Then since $b$ is not complex, by Lemma~\ref{lem:wtt-complexity}, $c$ is not complex.  In particular, $c$ not Martin-L\"of random (recall that a Martin-L\"of random real~$z$ is s.t. $Km(z \uh n)= n -O(1)$).  Moreover, $c$ is not high, as $a \geq_T b \geq_T c$ and $a$ is not high. Therefore by Theorem \ref{thm:schnorr-high}, if $c$ is not Martin-L\"of random and not high, then $c$ cannot be Schnorr random. 
\end{proof}

\section{On the degrees of random reals}

\subsection{Random Turing degrees}

One consequence of the machinery developed in the previous section is that we can use it to provide an exact characterization of all of the Martin-L\"of random Turing degrees that contain a real that is random with respect to a computable measure but not random with respect to any computable \emph{atomless} measure (recall that a Turing degree is Martin-L\"of random if it contains a Martin-L\"of random real).  Let us establish a few more definitions that will be useful in this section.

\begin{definition}
Let $\cMLR$ be the set of reals $A$ such that $A\in\MLR_\mu$ for some computable measure $\mu$.
\end{definition}

The class $\cMLR$ was, to the best of our knowledge, first considered in ~\cite{FuchsS1977}.  It was later studied in~\cite{MuchnikSU1998}, where elements of $\cMLR$ were referred to as ``natural sequences".

\begin{definition}
Let $\cNCR$ be the set of reals $A$ such that $A\notin\MLR_\mu$ for any computable atomless measure $\mu$.
\end{definition}

The motivation behind the definition of $\cNCR$ comes from the work of Reimann and Slaman (see, for instance, \cite{ReimannS2007} and \cite{ReimannS2008}), who studied the collection of sequences that are not random with respect to \emph{any} atomless measure (computable or otherwise), referring to this class as $\mathsf{NCR}_1$.  Although Reimann and Slaman have established a number of facts about $\mathsf{NCR}_1$, for instance, that it is countable and contains no non-$\Delta^1_1$ reals, a number of questions about the structure of $\mathsf{NCR}_1$ remain open.  $\cNCR$, in contrast, proves to be much easier to characterize.

We will begin by showing that there is at least one $x\in\cMLR\cap\cNCR$.

\begin{proposition}\label{cor:wtt-Demuth}
There is $x\in\cs$ that is random with respect to some computable atomic measure but not random with respect to any computable atomless measure.
\end{proposition}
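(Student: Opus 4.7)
I would take $x = \Omega^* = \Phi(\Omega)$, where $\Phi$ is the slowdown functional constructed in the proof of Theorem~\ref{thm:slowmega1}: this real was already shown there to be a $tt$-image of a Martin-L\"of random real and to fail to be complex, and both features will be exploited here.

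For the positive half of the statement, conservation of Martin-L\"of randomness (Theorem~\ref{thm:conservation}) applied to the $tt$-reduction $\Phi$ and to $\Omega \in \MLR$ yields $\Omega^* \in \MLR_{\lambda_\Phi}$, while Lemma~\ref{lem:comp-induced-measure} ensures that $\lambda_\Phi$ is computable. I would then verify that $\lambda_\Phi$ is atomic. The case analysis in the proof of Theorem~\ref{thm:slowmega1} shows that whenever $a < \Omega$ in $\cs$ (a set of positive Lebesgue measure), the image $\Phi(a)$ takes the eventually periodic form $\sigma(1^k 0)^\omega$ for some $\sigma$ and $k$; since there are only countably many such sequences, countable additivity forces at least one of them to have positive $\lambda_\Phi$-measure, which is then a $\lambda_\Phi$-atom.

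For the negative half, I would show that any real in $\MLR_\nu$ for an atomless computable measure $\nu$ is necessarily complex, from which the non-membership of $\Omega^*$ follows immediately. The key auxiliary lemma to prove is: for every atomless computable probability measure $\nu$, there exists a computable order $g$ such that $\nu([\sigma]) \leq 2^{-g(|\sigma|)}$ for every $\sigma \in \str$. Combined with the standard Levin-Schnorr characterization of $\nu$-randomness via monotone complexity, namely $x \in \MLR_\nu \iff Km(x \uh n) \geq -\log_2 \nu([x \uh n]) - O(1)$ (see e.g.~\cite{DowneyH2010}), this gives $Km(x \uh n) \geq g(n) - O(1)$ for every $x \in \MLR_\nu$, so that every such $x$ is complex. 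Since $\Omega^*$ is not complex, we conclude $\Omega^* \notin \MLR_\nu$ for any atomless computable $\nu$, as wanted.

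The main obstacle lies in proving the auxiliary lemma. The classical content is that $\max_{|\sigma|=n}\nu([\sigma]) \to 0$: the functions $x \mapsto \nu([x\uh n])$ are non-increasing in $n$, locally constant on cylinders of length $n$, and pointwise decrease to $\nu(\{x\})=0$ by atomlessness, so Dini's theorem on the compact space $\cs$ upgrades pointwise to uniform convergence. To turn this into a \emph{computable} rate I would use that $n \mapsto \max_{|\sigma|=n}\nu([\sigma])$ is uniformly computable (as a maximum of finitely many computable reals) to extract an increasing computable sequence $(n_k)_{k\geq 0}$ with $\max_{|\sigma|=n_k}\nu([\sigma]) \leq 2^{-k}$, and then set $g(m) = k$ whenever $n_k \leq m < n_{k+1}$ to obtain the desired computable order.
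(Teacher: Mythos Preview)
Your proof is correct, but the negative half takes a genuinely different route from the paper. The paper argues by contradiction via Proposition~\ref{prop:induced-measure-tt}: if $\Omega^* \in \MLR_\mu$ for some atomless computable $\mu$, that proposition supplies a non-decreasing $tt$-functional $\Theta$ with $\MLR_{\mu_\Theta}=\MLR$, so conservation of randomness would give $\Theta(\Omega^*)\in\MLR$, contradicting that $\Omega^*$ does not $wtt$-compute any Martin-L\"of random real. Your approach instead isolates a clean standalone lemma --- every $\nu$-Martin-L\"of random real is complex whenever $\nu$ is atomless and computable --- by combining a Dini/compactness argument for a computable uniform bound on cylinder measures with the Levin--Schnorr lower bound $Km(x\uh n)\geq -\log\nu(x\uh n)-O(1)$. (A small caveat: the full \emph{equivalence} you state with $Km$ is delicate because of the G\'acs gap between $Km$ and $KM$, but you only use the forward implication, which follows from $Km\geq KM$ and the $KM$ version of Levin--Schnorr, so the argument is sound.) Your route is more elementary in that it avoids the Kakutani-based construction of Proposition~\ref{prop:induced-measure-tt}; the paper's route, on the other hand, factors through a proposition that is reused in the classification of hyperimmune random degrees later in the section.
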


To prove this proposition, we need one further result.  In Section \ref{sec:background} we saw that if a computable measure $\mu$ is atomless and positive, then if $\Phi$ is an almost total functional such that $\lambda_\Phi=\mu$, then $\Phi^{-1}$ is an almost total functional such that $\mu_{\Phi^{-1}}=\lambda$.  This does not hold in general if $\Phi$ is total, but we can still obtain a measure $\nu$ that is equivalent to $\lambda$, in the sense that $\MLR_\nu=\MLR$.

\begin{proposition}\label{prop:induced-measure-tt}
If $\mu$ is a atomless, computable measure, then there is a non-decreasing $tt$-functional $\Theta$ such that the induced measure $\mu_\Theta$ has the property that
\[
\MLR_{\mu_{\Theta}}=\MLR.
\]
\end{proposition}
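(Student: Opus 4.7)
The plan is to construct $\Theta$ essentially as a totalized generalized inverse of the Kautz conversion functional associated to $\mu$. First, I would apply Theorem~\ref{thm:Kautz-conversion} to $\mu$ to obtain a non-decreasing, almost total functional $\Phi : \cs \to \cs$ with $\lambda_\Phi = \mu$; since $\mu$ is atomless, $\Phi$ is one-to-one. Atomlessness of $\mu$ further ensures $\mu([\sigma]) \to 0$ along every infinite branch, so $\Phi$ extends uniquely by continuity to a non-decreasing \emph{total} computable functional $\bar\Phi : \cs \to \cs$, which is itself a $tt$-functional with $\lambda_{\bar\Phi} = \mu$.

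Next, I would define $\Theta$ by the generalized-inverse formula: for each $y \in \cs$, set $\Theta(y) = \sup\{x \in \cs : \bar\Phi(x) \leq y\}$, the supremum being in the lex order on $\cs$. The motivation is that $\bar\Phi$ is non-decreasing, so $\{x : \bar\Phi(x) \leq y\}$ is a downward-closed subset of $\cs$, whose supremum provides a canonical preimage of $y$ (when $y$ lies in the range of $\bar\Phi$) or a neighbouring point (when it does not, which can occur if $\mu$ is not positive). Operationally, the $n$-th bit of $\Theta(y)$ is computed recursively: if $\sigma = \Theta(y) \uh n$ has been determined, then bit $n$ equals $1$ iff $\bar\Phi(\sigma 1 0^\omega) \leq y$. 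Monotonicity of $\Theta$ is then immediate from the definition.

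Finally, I would verify $\MLR_{\mu_\Theta} = \MLR$ by showing the stronger statement $\mu_\Theta = \lambda$. For any cylinder $[\sigma]$, the preimage $\Theta^{-1}([\sigma])$ agrees, modulo a countable (hence $\mu$-null, by atomlessness) set, with the interval $[\bar\Phi(\sigma 0^\omega), \bar\Phi(\sigma 1^\omega)]$; combining the identity $\mu = \lambda_{\bar\Phi}$ with the fact that $\bar\Phi^{-1}$ of this interval equals $[\sigma]$ modulo a $\lambda$-null set of flat-region endpoints yields $\mu_\Theta([\sigma]) = \lambda([\sigma])$. Caratheodory extension then gives $\mu_\Theta = \lambda$, and $\MLR_{\mu_\Theta} = \MLR$ follows trivially.

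The main obstacle will be the technical totalization of $\Theta$: on the countably many inputs $y$ for which the naive comparison $\bar\Phi(\sigma 1 0^\omega) \leq y$ cannot be settled by a halting computation (namely when exact equality holds), and on inputs $y$ lying outside the range of $\bar\Phi$ (which arises precisely when $\mu$ is not positive), one must fix careful conventions so that $\Theta$ is a genuine total computable functional while preserving both non-decreasingness and the push-forward identity. These ``bad'' inputs form a $\mu$-null set, so the specific conventions do not affect $\mu_\Theta$; nevertheless, ensuring that the comparison-based computation of each bit actually halts on \emph{every} $y \in \cs$ is the crux of the argument and is where the construction must be done with care.
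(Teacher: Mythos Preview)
Your plan has a genuine gap: the ``stronger statement $\mu_\Theta = \lambda$'' that you aim for is in general \emph{impossible} for a $tt$-functional $\Theta$, so no amount of care with tie-breaking conventions will rescue the construction. The reason is structural. If $\Theta$ is total, then for every string $\sigma$ the preimage $\Theta^{-1}([\sigma])$ is clopen (it and its complement are both effectively open, since each output bit is determined by some finite input prefix), hence a \emph{finite} union of cylinders. Thus $\mu_\Theta(\sigma)=\mu(\Theta^{-1}([\sigma]))$ is always a finite sum of values $\mu(\tau)$. Take for instance $\mu$ to be the Bernoulli measure with a transcendental parameter $p$ (atomless, computable). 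Every finite sum of cylinder measures is then an integer polynomial in $p$, which can never equal $1/2$; so $\mu_\Theta([0])\neq 1/2$ for \emph{any} $tt$-functional $\Theta$. This is precisely why the paper does not try to hit $\lambda$ on the nose: it instead builds $\Theta$ so that $\mu_\Theta$ is a generalized Bernoulli measure whose parameters $p_n$ are merely \emph{close} to $1/2$ (with $\sum_n |p_n-1/2|^2<\infty$), and then invokes the effective Kakutani theorem to conclude $\MLR_{\mu_\Theta}=\MLR$.

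A secondary issue: your claimed continuous extension $\bar\Phi$ of the Kautz map need not be a $tt$-functional either. Computing a bit of $\bar\Phi(x)$ requires comparing $x$ with an interval endpoint $\sup I_\sigma$, which for merely computable (not exactly computable) $\mu$ is a computable real; deciding $x\leq \sup I_\sigma$ is not a halting computation at the boundary. You flag this kind of difficulty for $\Theta$, but it already bites for $\bar\Phi$. In short, the generalized-inverse idea gives an \emph{almost total} $\Theta$ with $\mu_\Theta=\lambda$ (this is essentially the last clause of Theorem~\ref{thm:Kautz-conversion}), but to get a genuine $tt$-functional you must abandon the goal $\mu_\Theta=\lambda$ and accept a measure that is only equivalent to $\lambda$ in the sense of sharing the same $\MLR$ set, which is exactly the detour the paper takes.
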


\begin{proof}
See Appendix \ref{appendix}.
\end{proof}

\begin{proof}[Proof of Proposition \ref{cor:wtt-Demuth}]
The real constructed in the proof of Theorem~\ref{thm:slowmega1} above, $\Omega^*$, is random with respect to the induced measure $\lambda_\Phi$ (which is clearly atomic), and hence $\Omega^*\in\cMLR$.  Suppose, for sake of contradiction, that $\Omega$ is random with respect to a computable, atomless measure $\mu$.  Then by Proposition \ref{prop:induced-measure-tt}, there is a $tt$-functional $\Theta$ such that $\MLR_{\mu_\Theta}=\MLR$.  Moreover, by the conservation of randomness, it follows that $\Theta(\Omega^*)\in\MLR_{\mu_\Theta}=\MLR$,  but as we proved in Theorem~\ref{thm:slowmega1},  $\Omega^*$ can't even $wtt$-compute any $y\in\MLR$, yielding the desired contradiction.  Thus $\Omega^*\in\cNCR$.
\end{proof}

We can use the idea of this proof to provide a full classification of the Martin-L\"of random Turing degrees containing elements in $\cMLR\cap\cNCR$.  In providing the classification, we will use the following.

\begin{proposition}[\cite{ReimannS2008}, Proposition 5.7]\label{prop:reim-slam}
For $a\in\MLR$ and $b\in\cs$, if $a\equiv_{tt}b$, then $b\notin\cNCR$.
\end{proposition}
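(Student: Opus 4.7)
The plan is a three-step argument. Fix $tt$-reductions $\Phi$ and $\Psi$ witnessing $a \equiv_{tt} b$, with $\Phi(a) = b$ and $\Psi(b) = a$. The goal is to produce a computable atomless measure $\nu$ with $b \in \MLR_\nu$.

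\emph{Step 1: a computable measure.} I would put $\mu := \lambda_\Phi$. By Lemma~\ref{lem:comp-induced-measure}(2), $\mu$ is exactly computable, and by the conservation of Martin-L\"of randomness (Theorem~\ref{thm:conservation}), $b = \Phi(a) \in \MLR_\mu$. So $b$ is Martin-L\"of random for \emph{some} computable measure; the only thing missing is atomlessness.

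\emph{Step 2: $b$ is not an atom.} I would next verify that $b \notin \Atom_\mu$. If instead $\lambda(\Phi^{-1}(\{b\}))>0$, then on this positive-Lebesgue-measure set $\Psi \circ \Phi$ is constantly equal to $\Psi(b)=a$, so $a$ would be an atom of the computable measure $\lambda_{\Psi \circ \Phi}$. By the Kautz proposition (Section~\ref{sec:background}), $a$ would then be computable, contradicting $a \in \MLR$. Note that this step uses both $\Phi$ and $\Psi$, i.e.\ it is here that the full $tt$-equivalence (and not merely $b \leq_{tt} a$) is used.

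\emph{Step 3: atomless modification.} Finally, I would construct a computable atomless $\nu$ with $b \in \MLR_\nu$ by ``smearing'' the atoms of $\mu$. Since $\mu$ is exactly computable, its atomic structure is effectively accessible: each atom $\alpha$ of $\mu$ is a computable real (Kautz), its weight $\mu(\{\alpha\})$ can be effectively approximated from above along the path $(\alpha \uh n)_n$ of cylinder values, and the atoms of weight $\geq 2^{-k}$ form a bounded-width subtree of $\str$ that is computable from $\mu$. I would then define $\nu$ by replacing each atomic point mass $w_i\,\delta_{\alpha_i}$ with a uniform continuous distribution over a short, pairwise-disjoint neighborhood $I_i$ of $\alpha_i$, leaving the continuous part of $\mu$ unchanged. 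Since $b$ is not an atom, the neighborhoods $I_i$ can be chosen short enough to lie outside a prescribed cylinder around $b$, so every $\nu$-Martin-L\"of test pulls back to a $\mu$-Martin-L\"of test that $b$ already avoids.

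\emph{Main obstacle.} The crux of the argument is Step 3. Although steps 1 and 2 are clean consequences of conservation of randomness plus the Kautz proposition, the smearing construction must produce a genuinely computable measure (not merely lower- or upper-semicomputable) and must interact correctly with Martin-L\"of tests, so that $b$'s $\mu$-randomness actually transfers to $\nu$-randomness after all the atoms have been redistributed. This relies on the fine effective structure of exactly computable measures and on the uniform enumerability, in $k$, of atoms of weight at least $2^{-k}$.
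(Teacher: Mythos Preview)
The paper does not supply its own proof of this proposition---it is simply cited from Reimann--Slaman---so there is no in-paper argument to compare against. I will therefore evaluate your proposal on its own merits.

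Steps~1 and~2 are correct and are the natural opening moves. Conservation of randomness gives $b\in\MLR_\mu$ for the exactly computable $\mu=\lambda_\Phi$, and the $tt$-equivalence rules out $b$ being a $\mu$-atom. (Even more simply than your argument: if $b$ were an atom of a computable measure it would be computable by Kautz's proposition, hence $a=\Psi(b)$ would be computable, contradicting $a\in\MLR$.)

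Step~3 is where the proposal does not close. You rightly flag it as the crux, but the sketch has two concrete gaps.

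\emph{(i) Effective access to atoms.} Even for an exactly computable $\mu$, the atomic weights $\mu(\{\alpha\})=\lim_n\mu(\alpha\uh n)$ are only right-c.e.\ in general, not computable; and while the tree $\{\sigma:\mu(\sigma)\ge 2^{-k}\}$ is computable with bounded width, isolating its infinite paths together with their limiting weights uniformly in~$k$ is precisely the hard part. Your phrase ``replace each atomic point mass $w_i\delta_{\alpha_i}$'' presupposes effective data $(\alpha_i,w_i)$ you have not produced.

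\emph{(ii) Randomness transfer.} The claim that ``every $\nu$-Martin-L\"of test pulls back to a $\mu$-Martin-L\"of test that $b$ already avoids'' is unjustified. A $\nu$-test can carry arbitrarily little $\nu$-mass while containing the old atoms and hence large $\mu$-mass; stripping those atoms out of the test to repair this destroys effective openness unless you can locate them exactly---which is~(i) again. The final sentence about choosing the $I_i$ ``outside a prescribed cylinder around $b$'' also tacitly uses $b$ as a parameter in defining~$\nu$, which is not permitted.

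The Reimann--Slaman route avoids dissecting the atomic structure of $\lambda_\Phi$ altogether: it exploits the $tt$-equivalence at the level of the functional, via the $\Pi^0_1$ class $E=\{x:\Psi(\Phi(x))=x\}$, which contains $a$, has positive Lebesgue measure (since $a\in\MLR$), and on which $\Phi$ is injective. The atomless measure is then built from this injectivity directly, rather than by post-processing $\lambda_\Phi$.
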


\begin{theorem}
Let $\mathbf{a}$ be a Martin-L\"of random Turing degree.  Then 
there is some $a\in\mathbf{a}$ such that $a\in\cMLR\cap\cNCR$ if and only if $\mathbf{a}$ is hyperimmune.
\end{theorem}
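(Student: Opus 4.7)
The plan is to prove the two directions separately.

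For the ``only if'' direction, I argue the contrapositive: if $\mathbf{a}$ is Martin-L\"of random but hyperimmune-free, then no element of $\mathbf{a}$ lies in $\cNCR$. Fix any $r \in \mathbf{a} \cap \MLR$ and any $a \in \mathbf{a}$. By the classical fact that on a hyperimmune-free degree $\leq_T$ coincides with $\leq_{tt}$ (applied to both $a \leq_T r$ and $r \leq_T a$), we get $a \equiv_{tt} r$. Since $r \in \MLR$, Proposition~\ref{prop:reim-slam} yields $a \notin \cNCR$.

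For the ``if'' direction, suppose $\mathbf{a}$ is hyperimmune and contains some $r \in \MLR$. I will modify the slowdown construction of Proposition~\ref{prop:hyperimm-complexity} so that its output is Turing equivalent to $r$. Pick an $r$-computable function $f$ not dominated by any computable function, and a Turing reduction $\Psi$ with $\Psi^r = f$, padded so that its running time $g(n) = \min\{t : \Psi^r[t](n)\halts\}$ satisfies $g(n) \geq n$ for all $n$. Define a total functional $\Theta$ by interleaving one oracle bit before each block of ones:
\[
\Theta(x) = x(0)\,1^{g_x(0)}\,0\,x(1)\,1^{g_x(1)}\,0\,x(2)\,1^{g_x(2)}\,0\cdots,
\]
where $g_x(n) = \min\{t : \Psi^x[t](n)\halts\}$, with the convention $\Theta(x) = \cdots x(n)\,1^\omega$ if $g_x(n) = \infty$. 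Setting $a = \Theta(r)$, totality of $\Theta$ gives $a \leq_{tt} r$, while $r \leq_T a$ follows by scanning $a$ for successive $0$-delimiters and reading off the preceding oracle bits; hence $a \equiv_T r$ and $a \in \mathbf{a}$. By conservation of Martin-L\"of randomness (Theorem~\ref{thm:conservation}), $a \in \MLR_{\lambda_\Theta}$, and so $a \in \cMLR$.

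To verify $a \in \cNCR$, it suffices to show that $a$ is not complex: the conclusion then follows exactly as in the proof of Proposition~\ref{cor:wtt-Demuth}, combining Lemma~\ref{lem:wtt-complexity} with Proposition~\ref{prop:induced-measure-tt}. Let $L_n$ denote the length of the prefix $a \uh L_n = r(0)\,1^{g(0)}\,0\cdots r(n)\,1^{g(n)}$, so $L_n \geq g(n)$. Since this prefix is determined by $r(0),\ldots,r(n)$ and $g(0),\ldots,g(n-1)$ followed by the block $1^{g(n)}$, the facts $Km(\sigma\tau) \leq K(\sigma) + Km(\tau) + O(1)$ and $Km(1^k) = O(1)$ give $Km(a \uh L_n) \leq n \log g(n-1) + O(n)$. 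Given any computable order $h$, choose a computable order $G$ such that $m < G(m')$ implies $2m \log m + cm < h(m')$ for a suitable constant $c$; Lemma~\ref{lem:easy-orders} applied to $g$ then produces infinitely many $n$ with $g(n-1) < G(g(n))$, and using the padding $g(n-1) \geq n-1$ to absorb the $n$ factor gives $Km(a \uh L_n) < h(L_n)$. Hence $a$ is not complex. The main technical obstacle is precisely this absorption: the interleaved bits of $r$ contribute an $n$-term to the complexity bound that is absent in Proposition~\ref{prop:hyperimm-complexity}, and the padding condition $g(n) \geq n$ is exactly what lets this term be swallowed by $g(n-1)\log g(n-1)$ before Lemma~\ref{lem:easy-orders} can be invoked.
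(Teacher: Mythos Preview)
Your proof is correct and follows essentially the same approach as the paper: both directions match, with the hyperimmune-free case handled via Proposition~\ref{prop:reim-slam} and the hyperimmune case handled by modifying the slowdown functional of Proposition~\ref{prop:hyperimm-complexity} so that the oracle is coded into the output, then showing the result is non-complex and invoking the argument of Proposition~\ref{cor:wtt-Demuth}. The only difference is cosmetic: the paper codes $r(i)$ by using $0$-blocks of length one or two ($0^{r(i)+1}$) between the $1$-blocks, whereas you insert the single bit $r(i)$ before each $1$-block; both encodings give $a\equiv_T r$ and leave the complexity estimate essentially unchanged.
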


\begin{proof}
For the easier direction, suppose $\mathbf{a}$ is hyperimmune-free.  Then given $a\in\mathbf{a}\cap\MLR$, by a well-known result, if $b\equiv_T a$, then $b\equiv_{tt}a$.  Thus for any $b\equiv_T a$, by the conservation of randomness we have $b\in\cMLR$, but by Proposition~\ref{prop:reim-slam}, $b\equiv_{tt}a$ implies that $b\notin\cNCR$, i.e.\ $b$ is random with respect to some atomless measure.  Thus no $b\in\mathbf{a}$ is in $\cMLR\cap\cNCR$.

Now suppose that $\mathbf{a}$ is hyperimmune, and let $a\in\mathbf{a}\cap\MLR$.  We proceed as in the proof of Proposition \ref{prop:hyperimm-complexity}, with a slight modification.  Let $f\in\mathbf{a}$ be a function that is not dominated by any computable function.  Then there is some Turing functional $\Psi$ such that $\Psi^a(n)=f(n)$ for every $n$.  Then, similar to the proof of Proposition~\ref{prop:hyperimm-complexity}, we define a functional $\Gamma$ such that
\[\Gamma(c)=1^{t_0}\;0^{c(0)+1}\;1^{t_1}\;0^{c(1)+1}\;1^{t_2}\;0^{c(2)+1}\dotsc,\]
where $t_i$ is the least $t$ such that $\Psi^c(i)[t]\halts$, unless no such $t$ exists, in which case $t_i=+\infty$.  Note that we code the real $c$ into $\Gamma(c)$ so that if the $(i+1)$st block of 0s in $\Gamma(c)$ has length 1, then $c(i)=0$, and if the $(i+1)$st block of 0s in $\Gamma(c)$ has length 2, then $c(i)=1$.  Thus we have $\Gamma(a)\equiv_Ta$.  Further, by the conservation of randomness, we have $\Gamma(a)\in\cMLR$.  Now let $g:\omega\rightarrow\omega$ be the function such that
\[
\Gamma(a)=1^{g(0)}\;0^{a(0)+1}\;1^{g(1)}\;0^{a(1)+1}\;1^{g(2)}\;0^{a(2)+1}\dotsc
\]
Given the convention that for the least $t$ such that $\Psi^c(i)[t]\halts=k$, we have $k\leq t$, it follows that $f(n)\leq g(n)$, and hence $g(n)$ is not dominated by any computable function.

Now, we verify $\Gamma(a)$ is not complex as before, with the only difference being that we now have to consider the potentially doubled 0s, yielding
\[
Km(1^{g(0)}\; 0^{a(0)+1}\;1^{g(1)}\;0^{a(1)+1}\ldots 1^{g(n)}\;0^{a(n)+1}1^{g(n+1)})\leq2n \cdot \log g(n) \leq g(n) \log g(n).
\]
All the other steps proceed as before, and thus $\Gamma(a)$ is not complex.  Now, assuming that $\Gamma(a)$ is random with respect to some atomless measure, we can argue as in the proof of Proposition \ref{cor:wtt-Demuth} that $\Gamma(a)$ must $tt$-compute a Martin-L\"of random real, contradicting the fact that $\Gamma(a)$ is not complex.  Thus $\Gamma(a)\in\cNCR$.
\end{proof}

Since every hyperimmune degree contains a 1-generic real, and no 1-generic real is Martin-L\"of random with respect to \emph{any} computable measure (as is shown in \cite{MuchnikSU1998}, Theorem 9.10), we have an even stronger dichotomy:  Every hyperimmune-free random degree contains only reals that are random with respect to some computable atomless measure, while every hyperimmune random degree contains reals that are random only with respect to some computable atomic measure as well as reals that aren't random with respect to \emph{any} computable measure.

\subsection{Random computably enumerable sets}

In this last subsection, we will show that the conservation of randomness and related results also have consequences for the study of random computably enumerable sets.  In particular, we show the existence of a computably enumerable set that is random with respect to some computable measure.  This is somewhat surprising, given that computably enumerable sets quite far from Martin-L\"of random.  For instance, every c.e.\ set $x$ has low initial segment complexity:  for every $n$, $K(x\uh n)\leq 2\log(n)+O(1)$.  Despite this behavior, there are c.e.\ sets that are Martin-L\"of random with respect to \emph{some} computable measure, as we now demonstrate (this result was obtained independently by Reimann and Slaman).

\begin{theorem}
There exists a non-computable c.e.\ set $x$ and a computable probability measure~$\mu$ such that~$A$ is random with respect to~$\mu$.  
\end{theorem}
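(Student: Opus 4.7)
The plan is to invoke conservation of randomness (Theorem~\ref{thm:conservation}) on Chaitin's $\Omega$ together with a carefully designed almost total Turing functional $\Phi : \cs \to \cs$ whose value at $\Omega$ is the characteristic sequence of a non-computable c.e.\ set. The candidate c.e.\ set will be the halting problem $K$, and the candidate measure will be the induced measure $\lambda_\Phi$.

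Recall the classical wtt-reduction $K \leq_{wtt} \Omega$: given $\Omega \uh n$, wait for the first stage $s$ with $\Omega_s \geq 0.\Omega \uh n$; then $K_s \uh n = K \uh n$, so $K(n)$ can be read off. The plan is to upgrade this reduction into an almost total Turing functional $\Phi$ such that $\Phi(\Omega) = \chi_K$. Once this is done, Theorem~\ref{thm:conservation} applied to $\Omega \in \MLR$ yields $\chi_K \in \MLR_{\lambda_\Phi}$, and $\lambda_\Phi$ is computable by Lemma~\ref{lem:comp-induced-measure}. Since $K$ is a non-computable c.e.\ set, this establishes the theorem with $A = K$ and $\mu = \lambda_\Phi$.

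The main technical obstacle is the almost-totality of $\Phi$. The direct extension of the wtt-reduction is only defined on $\{y \in \cs : y \leq \Omega\}$, which has Lebesgue measure $\Omega < 1$. To overcome this, I would partition the oracle bits of $y$ into infinitely many disjoint blocks and use each block as an independent probe: for each output position $n$, we search through successive blocks until we find one whose bit-value (interpreted as a dyadic rational in $[0,1]$) is verifiably strictly less than $\Omega$ (verification using the approximations $\Omega_s$), and use that value as the threshold in the standard reduction. Under Lebesgue measure, the events ``probe $k$ succeeds'' are independent and each has probability $\Omega > 0$, so by the second Borel--Cantelli lemma, almost every $y \in \cs$ gives infinitely many successful probes; hence $\Phi$ is almost total. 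For $y = \Omega$ itself, the Martin-L\"of randomness of $\Omega$ ensures that its blocks behave typically enough to produce thresholds falling in the intervals $[0.\Omega\uh n, \Omega]$ for every $n$, which is exactly what the wtt-reduction needs in order to correctly output $K(n)$; with appropriate choice of block lengths one arranges $\Phi(\Omega) = \chi_K$.

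As a concluding remark, there is no need to verify separately that $\chi_K$ is non-atomic for $\lambda_\Phi$: if it were a $\lambda_\Phi$-atom, Kautz's proposition (stated in Section~\ref{sec:background}) would force $K$ to be computable, contradicting the classical undecidability of the halting problem.
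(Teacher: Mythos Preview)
Your plan has a genuine gap at the step where you assert $\Phi(\Omega)=\chi_K$. The standard reduction $K\leq_{wtt}\Omega$ needs, for each output bit~$n$, a threshold in the interval $[0.\Omega\uh(n+c),\Omega)$; only then does the stage~$s$ with $\Omega_s$ above the threshold satisfy $K_s(n)=K(n)$. Once you replace initial segments of the oracle by \emph{disjoint} blocks, the block values for input~$\Omega$ (beyond the first block) are no longer initial-segment approximations of~$\Omega$; they are essentially uniformly distributed dyadic rationals in $[0,1]$. So there is no reason for them to fall in the tiny window $[0.\Omega\uh(n+c),\Omega)$. Worse, your search returns the \emph{first} probe verified to lie below~$\Omega$; in a dovetailed search this is the probe whose value is farthest below~$\Omega$ (since such values are certified earliest by the approximation $\Omega_s\nearrow\Omega$), precisely the opposite of what you need. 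The sentence ``with appropriate choice of block lengths one arranges $\Phi(\Omega)=\chi_K$'' is therefore doing all the work and is not justified: no fixed block structure makes a later disjoint block of~$\Omega$ coincide with an initial segment of~$\Omega$, and the only block guaranteed to approximate~$\Omega$ well is the first one, whose finite length bounds the~$n$ for which the threshold is adequate.

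By contrast, the paper does not try to hit a specific c.e.\ set such as~$K$. It simply observes that the \emph{total} map $T(x)=\{n: q_n<x\}$ (with $(q_n)$ an effective enumeration of $\binrat$) is computable and one-to-one, hence induces a computable measure~$\mu$, and that for any left-c.e.\ real~$x$ the set $T(x)$ is c.e.\ by definition. Applying conservation of randomness to $\Omega$ gives that $T(\Omega)$ is a non-computable c.e.\ set which is $\mu$-random. This sidesteps your almost-totality problem entirely (the functional is already $tt$) and avoids the delicate issue of forcing the output to equal~$\chi_K$. If you want to salvage your approach, the fix is not to refine the probe mechanism but to relax the target: design~$\Phi$ so that $\Phi(\Omega)$ is \emph{some} c.e.\ set, which is exactly what the map~$T$ accomplishes.
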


\begin{proof}
Let $(q_n)_{n\in\omega}$ be an effective enumeration of $\binrat$. Let $T: \cs \rightarrow \cs$ be the map defined by
\[
T(x)=\{n \mid q_n < x\}
\]
where we see the input as an infinite binary sequence and the output as a set of integers. Clearly~$T$ is a computable one-to-one map, hence the measure~$\mu$ it induces on~$\cs$ is computable and atomless, and for every random~$x$, $T(x)$ is $\mu$-random. If $x$ is left-c.e.\ by definition of~$T$, $T(x)$ is a c.e.\ set. Therefore, $T(\Omega)$ is both c.e.\ and $\mu$-random. 
\end{proof}

We can also show that there is a non-computable c.e.\ member of $\cMLR\cap\cNCR$.

\begin{theorem}
There is a non-computable c.e.\ set $c$ such that $c\in\MLR_\mu$ for some computable atomic measure $\mu$ but $c\notin\MLR_\nu$ for any computable atomless measure $\nu$.
\end{theorem}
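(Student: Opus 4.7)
The plan is to construct $c$ by encoding the real $\Omega^*=\Phi(\Omega)$ from Theorem~\ref{thm:slowmega1} as a c.e.\ set through a total truncated version of the Dedekind-cut map used in the previous theorem. Two properties of $\Omega^*$ established earlier are crucial: it lies in $\MLR_{\lambda_\Phi}$ with $\lambda_\Phi$ a computable purely atomic measure (its atoms being the eventually periodic and eventually-$1$ sequences in the image of $\Phi$), and it is not complex. I first observe that $\Omega^*$ is a left-c.e.\ real: since $\Phi$ is a non-decreasing $tt$-functional and $(\Omega_t)_t$ is the standard non-decreasing computable approximation to $\Omega$, the sequence $\Omega^*_t:=\Phi(\Omega_t)$ is computable and non-decreasing and converges to $\Omega^*$.

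Next I fix a level-order effective enumeration $(q_n)_{n\in\omega}$ of $\binrat$ so that the bit-length $\ell(q_n)$ of the canonical dyadic expansion of $q_n$ is $O(\log n)$ and every dyadic string of every length appears, and define a total $tt$-functional $T_0:\cs\to\cs$ by
\[T_0(x)(n)=1 \iff x\uh\ell(q_n)\geq q_n\text{ lexicographically}.\]
Set $c:=T_0(\Omega^*)$. One checks that $T_0$ is injective (by completeness of the enumeration) and that, since $\Omega^*$ is irrational, $T_0(\Omega^*)$ coincides with the Dedekind set $\{n:q_n<\Omega^*\}$.

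I would then verify four properties of $c$. (a)~\emph{c.e.}: the map $x\mapsto 0.x\uh\ell$ is monotone, so $\Omega^*_t\uh\ell$ is lex-non-decreasing in $t$, making $T_0(\Omega^*)(n)=1$ a $\Sigma^0_1$ condition. (b)~\emph{$c\in\MLR_\mu$ for some computable atomic $\mu$}: take $\mu=(\lambda_\Phi)_{T_0}$, which is computable by Lemma~\ref{lem:comp-induced-measure} and for which $c\in\MLR_\mu$ by conservation of randomness (Theorem~\ref{thm:conservation}); injectivity of $T_0$ carries each atom of $\lambda_\Phi$ to an atom of $\mu$ of equal mass, so $\mu$ is atomic. (c)~\emph{non-computable}: querying $c(n)$ over the $O(2^k)$ indices with $\ell(q_n)=k$ determines $\Omega^*\uh k$ exactly, so $\Omega^*\leq_{wtt}c$, and non-computability of $\Omega^*$ (which is Turing-equivalent to $\Omega$) transfers to $c$. (d)~\emph{$c\in\cNCR$}: the use of $T_0$ is $O(\log n)$, so $c\leq_{wtt}\Omega^*$, and Lemma~\ref{lem:wtt-complexity} transfers non-complexity from $\Omega^*$ to $c$; if $c\in\MLR_\nu$ for some computable atomless $\nu$ then Proposition~\ref{prop:induced-measure-tt} yields a $tt$-functional $\Theta$ with $\Theta(c)\in\MLR$, and $c\geq_{wtt}\Theta(c)$ together with the Levin--Schnorr theorem would force $c$ to be complex, a contradiction.

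The delicate point is choosing $T_0$ so that it is simultaneously (i)~a $tt$-functional with a computable use bound, so that non-complexity transfers from $\Omega^*$ to $c$ via Lemma~\ref{lem:wtt-complexity}, and (ii)~injective with a $wtt$-invertible action on $\Omega^*$, so that atomicity of the induced measure and non-computability of $c$ follow automatically. The truncated lexicographic comparison at length $\ell(q_n)$, against a level-ordered enumeration of $\binrat$, is exactly what makes both requirements compatible.
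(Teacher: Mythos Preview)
Your proposal is correct and follows the same approach as the paper: both take $c=T(\Omega^*)$ for a Dedekind-cut map $T$, use the left-c.e.\ property of $\Omega^*$ to get that $c$ is c.e., and invoke Proposition~\ref{prop:induced-measure-tt} together with the non-complexity coming from Theorem~\ref{thm:slowmega1} to rule out any computable atomless measure.

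The one noteworthy difference is your replacement of the paper's map $T(x)=\{n:q_n<x\}$ by the truncated comparison $T_0$. This is a genuine improvement in rigor: the paper's $T$ is only almost total (undefined at dyadic rationals), so the claim that $C\leq_{tt}\Omega^*$ via $T$ is not literally correct, whereas your $T_0$ is a bona fide $tt$-functional with use $\ell(q_n)$ and agrees with $T$ at the irrational point $\Omega^*$. The paper then argues ``$C$, and hence $\Omega^*$, can $tt$-compute a $1$-random'' to reach the contradiction, while you instead transfer non-complexity from $\Omega^*$ to $c$ via Lemma~\ref{lem:wtt-complexity} and contradict $c\geq_{tt}\Theta(c)\in\MLR$ directly; these are just two orderings of the same syllogism. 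You are also more explicit than the paper about non-computability of $c$ and atomicity of the induced measure $(\lambda_\Phi)_{T_0}$, both of which the paper leaves implicit.
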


\begin{proof}
To prove this result, we merely need to show that $\Omega^*$, the real constructed in the proof of Theorem \ref{thm:slowmega1}, is left-c.e.\ and then apply the map $T$ defined above to produce a c.e.\ set $C$ that is $tt$-reducible to $\Omega$ (via the composition of $T$ with the slowdown operator $\Phi$ defined in the proof of Theorem \ref{thm:slowmega1}).  It will then follow that $C$ cannot be random with respect to any atomless computable measure, for as we argued in the proof of Proposition~\ref{cor:wtt-Demuth}, this would mean that $C$, and hence $\Omega^*$, can $tt$-compute a 1-random.

To see that $\Omega^*$ is left-c.e., notice that~$\Phi$ is a non-decreasing functional and $\Phi$ is continuous at~$\Omega$. Therefore, for a rational~$q$, we have
\[
q < \Omega^* \Leftrightarrow \exists x \, \left[ x < \Omega \wedge \Phi(x) > q \right]
\]
The right-hand side of the equivalence is a $\Sigma^0_1$ predicate, hence the left cut of $\Omega^*$ is c.e., meaning that $\Omega^*$ is left-c.e.

\end{proof}

Let us make a few remarks. First if a non-computable c.e.\ set is Martin-L\"of random with respect to a computable probability measure, then it must be Turing complete. Indeed, by Demuth's theorem, such a real must be Turing equivalent to a real that is Martin-L\"of random for Lebesgue measure and  Ku$\check{\mathrm{c}}$era~\cite{Kucera1985} proved that a c.e.\ real that can compute a Martin-L\"of random real must necessarily be Turing complete. 

The family of c.e.\ sets that are random for some computable probability measure is therefore not downwards closed in the Turing degrees. However, this family is closed downwards in the\ $tt$-degrees by Demuth's theorem:  given a $tt$-functional $\Phi$ and a c.e.\ set that is random with respect to a computable measure $\mu$, if $\Phi(c)$ is c.e.\, then it is either computable or Turing complete, and in both cases, it will be random with respect to the measure induced by $(\mu,\Phi)$.  It is thus natural to consider whether the family of c.e.\ random sets forms a $tt$-ideal.  As we now show, they do not.

\begin{proposition}
The c.e.\ random sets do not form a~$tt$-ideal. 
\end{proposition}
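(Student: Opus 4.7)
The plan is to exhibit two c.e.\ sets $A$ and $B$, each random with respect to some computable probability measure, such that $A \oplus B$ is not random with respect to any computable measure. Combined with the downward $tt$-closure of the family (noted in the remark preceding this proposition), this establishes that the family fails to be closed under effective join, and hence cannot be a $tt$-ideal.

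The guiding observation is that $\cMLR$ is closed under $tt$-equivalence: if $x \in \cMLR$ via a computable measure $\mu$ and $y = \Psi(x)$ for a $tt$-functional $\Psi$, then $y \in \MLR_{\mu_\Psi}$ with $\mu_\Psi$ computable (Theorem~\ref{thm:conservation} and Lemma~\ref{lem:comp-induced-measure}). So we need $A, B \in \cMLR$ with the $tt$-degree of $A \oplus B$ missing $\cMLR$ altogether. Any c.e.\ set random for a computable \emph{atomless} measure already $tt$-computes a Martin-L\"of random (via Proposition~\ref{prop:induced-measure-tt}), so both $A$ and $B$ must be drawn from the atomic-only zone $\cMLR \cap \cNCR$, for which the preceding theorem provides the prototype $T(\Omega^*)$ --- a c.e.\ set $tt$-equivalent to the non-complex real $\Omega^*$ of Theorem~\ref{thm:slowmega1}. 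I would take $A = T(\Omega^*)$ and construct $B$ analogously from a second, coordinated left-c.e.\ slowdown $\Omega^{**}$ of $\Omega$, so that $A \oplus B \equiv_{tt} \Omega^* \oplus \Omega^{**}$ and $\Omega^* \oplus \Omega^{**}$ is still non-complex.

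With such $A, B$ in hand, the verification that $A \oplus B \notin \cMLR$ splits cleanly into two subcases. For a computable atomless $\nu$: randomness of $A \oplus B$ for $\nu$ would, by Proposition~\ref{prop:induced-measure-tt}, supply a $tt$-functional $\Theta$ with $\Theta(A \oplus B) \in \MLR$; since Martin-L\"of randoms are complex by Levin--Schnorr, Lemma~\ref{lem:wtt-complexity} applied to $A \oplus B \geq_{wtt} \Theta(A \oplus B)$ contradicts the non-complexity of $A \oplus B$. For a computable atomic $\mu$: $A \oplus B$ is non-computable because $A$ is, so it cannot be a $\mu$-atom (by Kautz's proposition); and any non-atom $\mu$-random real must be random for the atomless part of $\mu$, which the previous subcase has already excluded.

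The main obstacle is the construction of $B$. The naive choice $B = T'(\Omega^*)$ for a second computable injection $T'$ gives $A \oplus B \equiv_{tt} \Omega^*$, which is already in $\cMLR$, so this fails. One needs a genuinely distinct left-c.e., non-complex slowdown $\Omega^{**}$ whose stagewise growth rate is synchronized with that of $\Omega^*$, arranging that the rare ``informative'' initial segments of $\Omega^*$ and $\Omega^{**}$ occur at common length thresholds so that the $Km$-complexity of the join falls below every computable order at infinitely many lengths. Verifying that such a coordinated slowdown exists, produces a c.e.\ random set $B$, and keeps $\Omega^* \oplus \Omega^{**}$ non-complex is the technically demanding heart of the argument.
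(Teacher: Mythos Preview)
Your strategy is to arrange that $A\oplus B$ is non-complex and non-computable, and then conclude $A\oplus B\notin\cMLR$. But this implication is false: $\Omega^*$ itself (Theorem~\ref{thm:slowmega1}) is non-complex, non-computable (indeed $\Omega^*\equiv_T\Omega$), and $\lambda_\Phi$-random, hence in $\cMLR$. Non-complexity only rules out randomness for computable \emph{atomless} measures (via Proposition~\ref{prop:induced-measure-tt}); it says nothing about atomic ones. Your treatment of the atomic case --- ``any non-atom $\mu$-random real must be random for the atomless part of $\mu$, which the previous subcase has already excluded'' --- cannot save this: the atomless part of a computable measure need not be computable, so Proposition~\ref{prop:induced-measure-tt} does not apply to it, and in any case $\Omega^*$ already witnesses that a non-computable non-atom can be random for a computable atomic measure while lying in $\cNCR$. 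So even granting the coordinated slowdown $\Omega^{**}$ (whose construction you do not carry out), the argument would not close: you would need some property of $A\oplus B$ beyond non-complexity that $\Omega^*$ lacks, and none is identified.

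The paper's proof is entirely different and much shorter. Take $y_1=T(x_1)$ and $y_2=T(x_1+a)$, where $x_1$ is a left-c.e.\ Martin-L\"of random real and $a$ is a non-computable left-c.e.\ real of incomplete Turing degree; both $y_i$ are c.e.\ and random for the (atomless) measure induced by $T$. Since $T$ is one-to-one with $\Pi^0_1$ range, $T^{-1}$ extends to a $tt$-functional $S$, and $z_1\oplus z_2\mapsto |S(z_1)-S(z_2)|$ is a $tt$-reduction sending $y_1\oplus y_2$ to $a$. If $y_1\oplus y_2\in\cMLR$, conservation of randomness gives $a\in\cMLR$; but a non-computable (left-)c.e.\ real in $\cMLR$ must be Turing complete (Demuth's theorem plus Ku\v{c}era), contradicting the choice of $a$. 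Note that $y_1,y_2\notin\cNCR$ here --- your insistence that both witnesses lie in $\cNCR$ was an artefact of the flawed complexity strategy, not a genuine constraint.
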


\begin{proof}
Let $a$ be a left-c.e., Turing incomplete, real and~$x_1$ a left-c.e.\ random real. Set~$x_2=x_1+a$ and notice that~$x_2$ is left-c.e.\ and random as the sum of a random left-c.e.\ real and a left-c.e.\ real~\cite[Chapter 8]{DowneyH2010}. Now convert $x_1$ and $x_2$ into c.e.\ reals via~$T$: $y_1=T(x_1)$ and $y_2=T(x_2)$. Then both $y_1$ and $y_2$ are c.e.\ and random with respect to the measure~$\mu$ induced by~$T$. 

Since~$T$ is a total computable map, its range is a $\Pi^0_1$ class, call it $\mathcal{C}$. Since $T$ is one-to-one, the function $T^{-1}$ is Turing-computable on its domain~$\mathcal{C}$ (indeed for all~$z \in \mathcal{C}$, the set $\{x : T(x)=z\}$ is a $\Pi^0_1(z)$ class containing only one element, hence that element can be computably found when~$z$ is given). It is well-known that a partial functional defined on a $\Pi^0_1$ class can be extended to a total functional. Then let~$S$ be a $tt$-functional which is an extension of $T^{-1}$ to the entire space~$\cs$. 

Now, suppose that the join $y_1\oplus y_2$ is random with respect to some computable measure~$\nu$. Consider the functional $\Psi$ defined by $\Psi(z_1 \oplus z_2)=|S(z_1)-S(z_2)|$. This is a $tt$-functional, and $\Psi(y_1 \oplus y_2)=a$. By the conservation of Martin-L\"of randomness, this means that~$a\in\MLR_{\nu_\Psi}$. This is a contradiction since by the discussion above, an incomplete (left-)c.e.\ real cannot be Martin-L\"of random w.r.t.\ any computable measure.

\end{proof}


\appendix
\section{Appendix}\label{appendix}

\begin{proof}[Proof of Lemma~\ref{lem:comp-induced-measure}]
We proceed inductively as follows:  First, 
\[
\mu_\Phi(\emptyset)=\mu(\Phi^{-1}([\emptyset]))=\mu(\Phi^{-1}(\dom(\Phi)))=1,\]
 since $\Phi$ is almost total.
Now suppose that $\mu_\Phi(\sigma)$ is computable.  Then $\mu_\Phi(\sigma 0)$ and $\mu_\Phi(\sigma 1)$ are both approximable from below, and since $\mu_\Phi(\sigma)=\mu_\Phi(\sigma 0)+\mu_\Phi(\sigma 1)$, it follows that both $\mu_\Phi(\sigma 0)$ and $\mu_\Phi(\sigma 1)$ are approximable from above.  Thus, both are computable.

For the second part, let $\phi$ be a computable function that bounds the use of $\Phi$, i.e.\ if $\Phi(x)=y$, then for every $n\in\omega$, $\Phi^{x\uh\phi(n)}\succeq y\uh n$.
Without loss of generality, we can assume that if $|\sigma|=n$ and $|\tau|<\phi(n)$, then $\Phi^\tau\not\succeq\sigma$.  If we define
\[
\pre_\Phi(\sigma):=\{\tau\in\str:\Phi^{\tau}\succeq\sigma\;\wedge\;(\forall \tau'\preceq \tau)\Phi^{\tau'}\not\succeq\sigma\},
\]
(so that $[\pre_\Phi(\sigma)]=\Phi^{-1}([\sigma])$), it follows that
\[
\pre_\Phi(\sigma)=\{\tau\in 2^{\phi(|\sigma|)}:\Phi^\tau\succeq\sigma\}\]
and thus
\[\mu_\Phi(\sigma)=\mu(\Phi^{-1}([\sigma]))=\mu\bigl(\bigcup_{\tau\in\pre_\Phi(\sigma)}[\tau]\bigr)=\sum_{\tau\in\pre_\Phi(\sigma)}\mu(\tau),\]
which is $\binrat$-valued because $\mu$ is $\binrat$-valued and $\pre_\Phi(\sigma)$ is finite.  Moreover, since we can find, effectively in $\sigma$, the index for $\pre_\Phi(\sigma)$ as a finite set, if follows that $\mu_\Phi$ is a computable function from $\str$ to $\binrat$, and thus is exactly computable.
\end{proof}

\begin{remark}
In the proof of Theorem~\ref{thm:conservation} below, we will have to be careful with the enumeration of our Martin-L\"of tests, and so we will ensure that these tests have nice presentations.  Recall that a set $S\subseteq\str$ is prefix-free if for every $\sigma, \tau\in S$, if $\sigma\preceq\tau$, then $\sigma=\tau$.  Then given a Martin-L\"of test $\{\mathcal{U}_i\}_{i\in\omega}$, we will say that a uniformly computable sequence $\{S_i\}_{i\in\omega}$ of subsets of $\str$ is a prefix-free presentation of $\{\mathcal{U}_i\}_{i\in\omega}$ if we have $\mathcal{U}_i=[S_i]$ for every $i\in\omega$. 
\end{remark}

\begin{proof}[Proof of Theorem~\ref{thm:conservation}]
Suppose that $\Phi(x)\notin\MLR_{\mu_\Phi}$; we will show that $x\notin\MLR_\mu$.  Let $\{\mathcal{U}_i\}_{i\in\omega}$ be a $\mu_\Phi$-Martin-L\"of test such that $\Phi(x)\in\bigcap_{i\in\omega}\mathcal{U}_i$.  We define a $\mu$-Martin-L\"of test $\{\mathcal{V}_i\}_{i\in\omega}$ containing $x$ as follows.  First, let $\{S_i\}_{i\in\omega}$ be a prefix-free presentation of $\{\mathcal{U}_i\}_{i\in\omega}$.  Then we define, for each $i\in\omega$,
\[P_i=\bigcup_{\sigma\in S_i}\pre_\Phi(\sigma).\]
Note that since $S_i$ is prefix-free, for distinct $\sigma_1,\sigma_2\in S_i$, $\pre_\Phi(\sigma_1)\cap\pre_\Phi(\sigma_2)=\emptyset$, and so $\bigcup_{\sigma\in S_i}\pre_\Phi(\sigma)$ is a disjoint union.  Hence
\[
\mu([P_i])=\mu\bigl(\bigcup_{\sigma\in S_i}[\pre_\Phi(\sigma)]\bigr)=\sum_{\sigma\in S_i}\mu([\pre_\Phi(\sigma)])=\sum_{\sigma\in S_i}\mu(\Phi^{-1}([\sigma]))=\mu_\Phi(\mathcal{U}_i).
\]
Now if we set $\mathcal{V}_i:=[P_i]$ for each $i$, we have $\mu(\mathcal{V}_i)=\mu_\Phi(\mathcal{U}_i)$ for each $i$.  In addition, since the collection $\{\mathcal{V}\}_{i\in\omega}$ is definable uniformly from $\{\mathcal{U}\}_{i\in\omega}$, it follows that $\{\mathcal{V}\}_{i\in\omega}$ is a $\mu$-Martin-L\"of test.  Lastly, we must verify that $x\in\bigcap_{i\in\omega}\mathcal{V}_i$.  For each $i$, since $\Phi(x)\in\mathcal{U}_i$, there is some $\sigma\in S_i$ and some least $n\in\omega$ such that $\Phi^{x\uh n}\succeq\sigma$.  Thus $x\uh n\in \text{Pre}_\Phi(\sigma)$, and so it follows that 
$x\uh n\in P_i$ and $x\in\mathcal{V}_i$.

\end{proof}

\begin{proof}[Proof of Theorem~\ref{thm:Kautz-conversion}]
We provide Kautz's proof for completeness.  The key observation in Kautz's proof is that for a given computable measure $\mu$, almost every $x\in[0,1]$ has a binary representation given in terms of $\mu$, which we will refer to as its $\mu$-representation, denoted by Kautz as $\text{seq}_\mu(x)$.  Using this $\mu$-representation, we will define $\Phi$ so that $\Phi(x)=\text{seq}_\mu(x)$.  To compute the $\mu$-representation of $x\in[0,1]$, we make use of what we'll call a $\mu$-partition of [0,1].  A \emph{$\mu$-partition of [0,1] at level $n$} is a collection of $k=2^n$ closed intervals $I_{\sigma_0}, I_{\sigma_1},\dotsc I_{\sigma_{k-1}}$ such that 
\begin{enumerate}
\item $\sigma_0,\sigma_1,\dotsc,\sigma_{k-1}$ is a listing of all strings of length $n$ in lexicographical ordering,
\item $\bigcup_{i=0}^{k-1}I_{\sigma_i}=[0,1]$,
\item $\sup I_{\sigma_i}=\inf I_{\sigma_{i+1}}$ for $0\leq i\leq k-2$, and
\item $\mu(\sigma_i)=\lambda(I_{\sigma_i})$ for $0\leq i\leq k-1$.
\end{enumerate}
We further require that the $\mu$-partition of level $n$ is compatible with the $\mu$-partition of level $n+1$ for every $n$, so that given a string $\sigma$ of length $n$, we have
\[I_\sigma=I_{\sigma0}\cup I_{\sigma1}.\]

Now, given a real $x\in[0,1]$ we can compute its $\mu$-representation $\text{seq}_\mu(x)$ as follows.  To determine the first bit of $\text{seq}_\mu(x)$, we consider the $\mu$-partition of [0,1] at level 1, $I_0\cup I_1.$  Given that $\mu$ is computable but not necessarily exactly computable, we may have to approximate $I_0$ and $I_1$ until we see that $x\in I_0$ or $x\in I_1$, which will occur as long as $x$ is not the right endpoint of $I_0$ (we omit the details).  If $x\in I_0$, the first bit of $\text{seq}_\mu(x)$ is a 0, and if $x\in I_1$, the first bit of $\text{seq}_\mu(x)$ is a 1.  Having determined the first $n$ bits of $\text{seq}_\mu(x)$ by finding $\sigma$ such that $|\sigma|=n$ and $x\in I_\sigma$, we determine whether $x\in I_{\sigma0}$ or $x\in I_{\sigma_1}$ (where $I_{\sigma0}$ and $I_{\sigma1}$ are given by the $\mu$-partition of [0,1] at level $n+1$), and output a 0 or 1 accordingly, as in base case described above.

Thus, if $x$ is not an endpoint of $I_\sigma$ for any $\sigma\in\str$, then $\text{seq}_\mu(x)$ is the unique real $y\in\cs$ such that $x\in I_{y\uh n}$ for every $n$.  Clearly, then $\Phi$ is almost total, and we 
have that 
\[\Phi^{-1}([\sigma])=\{x:\Phi(x)\succ\sigma\}=I_\sigma,\]
so that
\[\lambda_\Phi(\sigma)=\lambda(\Phi^{-1}([\sigma]))=\lambda(I_\sigma)=\mu(\sigma).\]
In the case that $\mu$ is atomless, we have moreover that for every $y\in\cs$, $\lim_{n\rightarrow\infty}\lambda(I_{y\uh n})=0$, which implies that there is a unique $x$ such that $\bigcap_{n\in\omega}I_{y\uh n}=\{x\}$.  Thus, if $\Phi(x_1)=\Phi(x_2)$, we must have $x_1=x_2$.  Next, if $\mu$ is positive, then for every $\sigma\in\str$, $\lambda(I_\sigma)>0$, which means that $\Phi^{-1}(\sigma)$ is non-empty for every $\sigma\in\str$.  Thus, given $y\in\cs$, since 
\[\Phi^{-1}(y\uh n)\supseteq\Phi^{-1}(y\uh(n+1))\]
for every $n$ and each is non-empty, there is some $x$ such that 
\[x\in\bigcap_{n\in\omega}\Phi^{-1}(y\uh n).\]
Lastly, in the case that $\mu$ is both atomless and positive, then 
since $\Phi$ is one-to-one, it has an inverse $\Phi^{-1}$.  Since $\Phi$ is onto up to a set of measure zero, it follows that $\Phi^{-1}$ is almost total.  Given $y\in\cs$ in the range of $\Phi$, i.e.\ $\Phi(x)=y$ for some $x\in\cs$, then $\Phi^{-1}(y)$ can be computed by successively computing $\Phi^{-1}(y\uh n)$ for each $n$ and then intersecting these sets.  More specifically, since
\[
\bigcap_{n\in\omega}\Phi^{-1}(y\uh n)=\{x\},
\]
for each $i$, we will eventually find some $n_i$ such that
\[
z\in\bigcap_{n\leq n_i}\Phi^{-1}(y\uh n)\Rightarrow z\uh i=x\uh i.
\]
Thus we will have $(\Phi^{-1})^{y\uh n_i}\succeq x\uh i$ for every $i$.
\end{proof}

\begin{proposition}\label{prop:Km-C-complex}
The following are equivalent.
\begin{itemize}
\item[(i)] $x$ is complex in the sense of Definition~\ref{def:complex}.
\item[(ii)] There exists a computable order~$h$ such that $C(x \uh n) \geq h(n)$ for all~$n$, $C$ denoting plain Kolmogorov complexity. 
\end{itemize}
\end{proposition}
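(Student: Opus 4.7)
The plan is to exploit the classical inequalities comparing plain ($C$), prefix-free ($K$), and monotone ($Km$) Kolmogorov complexity. Extending a universal prefix-free machine $V$ to a monotone machine $M$ by $M(\tau) = V(\sigma)$ for the unique $\sigma \preceq \tau$ on which $V$ halts (if one exists) gives $Km(\sigma) \leq K(\sigma) + O(1)$, and a similar simulation yields $C(\sigma) \leq K(\sigma) + O(1)$. The reverse inequalities are obtained by prefix-freely encoding the output length: $K(\sigma) \leq Km(\sigma) + K(|\sigma|) + O(1)$ and $K(\sigma) \leq C(\sigma) + K(|\sigma|) + O(1)$. Combining these with $K(n) \leq 2\log n + O(1)$ produces the central estimate
\[
|C(\sigma) - Km(\sigma)| = O(\log|\sigma|).
\]

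From this, the direction $(ii) \Rightarrow (i)$ is straightforward in the generic case: from $C(x\uh n) \geq h(n)$ one extracts $Km(x\uh n) \geq h(n) - O(\log n)$, and the non-decreasing envelope of $\max(h(n) - c\log n, 0)$ (where $c$ is the hidden constant) supplies a computable order witnessing $(i)$, provided $h$ eventually exceeds $c\log n$. The direction $(i) \Rightarrow (ii)$ is symmetric, using $C(x\uh n) \geq Km(x\uh n) - O(\log n) \geq g(n) - O(\log n)$.

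The main obstacle is the logarithmic gap: when the witnessing computable order grows more slowly than $\log n$, the $O(\log n)$ correction swamps the lower bound and fails to produce a valid order on the other side. The natural resolution, which I would adopt, is to go through the characterization of complex reals as reals computing a DNR function. Kjos-Hanssen, Merkle, and Stephan~\cite{Kjos-HanssenMS2011} establish that condition $(ii)$ is equivalent to ``$x$ computes a DNR function''; the same argument applied with $Km$ in place of $C$ establishes the analogous characterization of $(i)$. Since the DNR characterization is insensitive to the logarithmic correction, composing these two equivalences yields $(i) \Leftrightarrow (ii)$ unconditionally.
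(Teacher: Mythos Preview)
Your outline is correct in spirit, but it is both less direct than the paper's argument and vague at precisely the point where the real work lies.

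For $(i)\Rightarrow(ii)$ you worry unnecessarily about the logarithmic gap. The paper simply observes that $Km(\sigma)\le K(\sigma)+O(1)\le 2C(\sigma)+O(1)$ (the second inequality via any crude prefix-free encoding of a plain description). Hence $C(x\uh n)\ge g(n)/2 - O(1)$, which is already a computable order no matter how slowly $g$ grows. No detour through DNR is needed here; this multiplicative bound sidesteps the additive $O(\log n)$ problem entirely, and it is the simplification you are missing.

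For $(ii)\Rightarrow(i)$ your instinct to invoke the Kjos-Hanssen--Merkle--Stephan characterization is exactly right, and this is what the paper does too---though it quotes the equivalent form ``$(ii)$ holds iff $x$ $wtt$-computes a sequence $(\sigma_n)$ with $C(\sigma_n)\ge n$'' rather than the DNR form. However, your sentence ``the same argument applied with $Km$ in place of $C$ establishes the analogous characterization of $(i)$'' is where the actual content lives, and you do not carry it out. The paper does: if $\phi$ bounds the use of the $wtt$-reduction, then from a shortest monotone description $p$ of $x\uh\phi(n)$ the universal monotone machine outputs some extension of $x\uh\phi(n)$; supplying $n$ (at cost $2\log n+O(1)$) lets one truncate to $x\uh\phi(n)$ and compute $\sigma_n$, so $C(\sigma_n)\le Km(x\uh\phi(n))+2\log n+O(1)$, whence $Km(x\uh\phi(n))\ge n/2-O(1)$ and, by monotonicity of $Km$, $Km(x\uh m)\ge \phi^{-1}(m)/2-O(1)$. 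That explicit computation is the step your proposal leaves as an assertion.

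A small precision issue: the Kjos-Hanssen--Merkle--Stephan equivalence is with \emph{$wtt$-computing} a DNR function (Turing-computing DNR corresponds to \emph{autocomplex}, a weaker notion). The $wtt$ use bound $\phi$ is exactly what produces the computable order $\phi^{-1}$ in the argument above, so the distinction matters.
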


Item (ii) corresponds to the original definition of complex reals by Kjos-Hanssen et al. 

\begin{proof}
$(i) \rightarrow (ii)$ is trivial as $Km \leq 2C$. For the reverse direction, we use the following result of Kjos-Hanssen et al: a real satisfies (ii) if and only if it $wtt$-computes a sequence of strings $(\sigma_n)$ such that $C(\sigma_n) \geq n$. Now suppose that $x$ is complex, and therefore $wtt$-computes such a sequence $\sigma_n$. Let $\varphi$ be a computable bound on the use of this reduction. We have the following inequalities:
\[
Km(x \uh \phi(n)) \geq^+ C(\sigma_n) - 2\log n \geq n - 2\log n \geq^+ n/2
\]
The second inequality is true by definition of $\sigma_n$. To see that the first one holds, let~$p$ be the shortest $Km$-description of $x \uh \phi(n)$. Using~$p$, one can compute an extension $\tau$ of $x \uh \phi(n)$. Then, specifying~$n$ (for a cost of~$2 \log n+O(1)$ bits), one can retrieve $x \uh \phi(n)$ and therefore compute $\sigma_n$. Since~$Km$ is monotonic, it follows that
\[
Km(x \uh n) \geq^+ \phi^{-1}(n)/2
\]
and the right-hand side is a computable order. 

\end{proof}

\begin{proof}[Proof of Proposition \ref{prop:induced-measure-tt}]
The idea behind the proof is to define a non-decreasing $tt$-functional $\Theta$ such that $\mu_\Theta$ is a generalized Bernoulli measure, i.e. such that for every $n$, there is some $p_n\in[0,1]$ such that 
\[
p_n=\frac{\mu_\Theta(\sigma 0)}{\mu_\Theta(\sigma)}
\]
for every $\sigma\in\str$ of length $n$.  Moreover, we will define $\Theta$ so that 
\[
\biggl|p_n-\frac{1}{2}\biggr|^2\leq 2^{-|\sigma|}
\]
for every $n\in\omega$.  Lastly, we would like to define $\Theta$ in such a way that the resulting values $p_n$ will always be contained in some fixed interval $[\epsilon, 1-\epsilon]$ for $\epsilon\in(0,\frac{1}{2})$; such measures are called \emph{strongly positive}.  Now by the effective version of Kakutani's Theorem (see, for instance, \cite{BienvenuM2009}), given two computable, strongly positive, generalized Bernoulli measures $\mu_1$ (with associated values $p_1,p_2,\dotsc$) and $\mu_2$ (with associated values $q_1,q_2,\dotsc$) such that 
\[
\sum_{i=1}^\infty|p_i-q_i|^2<\infty,
\]
it follows that $\MLR_{\mu_1}=\MLR_{\mu_2}$.  Thus, if we can define such $\Theta$ satisfying the given conditions, then we will have
\[
\sum_{i=1}^\infty\biggl|p_i-\frac{1}{2}\biggr|^2<\infty,
\]
and hence $\MLR_{\mu_\Theta}=\MLR$.  

To define $\Theta$, we sketch the main idea and leave the details to the reader.  To define $p_1$, we look for a finite, prefix-free collection of strings $\{\sigma_1,\dotsc,\sigma_k\}$ such that 
\[
I_{\sigma_1}\cup\dotsc\cup I_{\sigma_k}=[0,\frac{1}{2}-\epsilon_1],
\]
for some $\epsilon_1<\frac{1}{2}$, where $I_\sigma$ is as defined in the proof of Theorem~\ref{thm:Kautz-conversion}  (we can find such a collection effectively because $\mu$ is atomless).  Then we define $\Theta$ so that extensions of each $\sigma_i$ is mapped to extensions of 0 (and reals that extend none of the $\sigma_i$'s are mapped to extensions of 1).  Thus $p_1=\sum_{i\leq k}\mu(\sigma_i)$.

Now we repeat this procedure, partitioning the intervals $[0,\frac{1}{2}-\epsilon_1]$ and $[\frac{1}{2}-\epsilon_1,1]$ each into two intervals, each of which is determined by a finite, prefix-free collection of strings, just as we partitioned the interval [0,1] above, but we must make sure that ratios of the sizes of the components of each partition is the same, i.e.\ the left component of each is $p_2$ times the length of the given interval, where $p_2$ is within $\frac{1}{4}$ of $\frac{1}{2}$.  In so doing, we will get four collections of strings, extensions of which will be mapped to extensions of 00,01,10,11 (depending on which of the four partitions the sequences belong to).  Continuing this procedure, we will eventually define $\Theta$ with the desired properties.
\end{proof}

\bibliographystyle{alpha}
\bibliography{randomness_and_strong_reductions}

\end{document}